  \providecommand\BibTeX{{%
    \normalfont B\kern-0.5em{\scshape i\kern-0.25em b}\kern-0.8em\TeX}}}
\let\footnote=\endnote
\newsavebox{\measure@tikzpicture}
  \def\tikz@width{#1}%
\newtheorem*{remark}{Remark}
\newtheorem{assumption}{Assumption}
\newcommand{\exclude}[1]{}
\newcommand{\inP}{\mbox{$\,\stackrel{\scriptsize{\mbox{p}}}{\rightarrow}\,$}}
\newcommand{\as}{\mbox{$\,\stackrel{\scriptsize{\mbox{wp1}}}{\rightarrow}\,$}}
\newcommand{\inD}{\mbox{$\,\stackrel{\scriptsize{\mbox{\rm d}}}{\rightarrow}\,$}}
\newcommand*\diff{\mathop{}\!\mathrm{d}}
\newcommand{\tabincell}[2]{\begin{tabular}{@{}#1@{}}#2\end{tabular}} 
\renewcommand{\mathfrak}{\mathcal}
\newcommand{\rp}[1]{{\color{black}{{#1}}\color{black}}}
\begin{document}
\title[Overlapping Batch Confidence Intervals on Statistical Functionals]{Overlapping Batch Confidence Intervals on Statistical Functionals Constructed from Time Series:  Application to Quantiles, Optimization, and Estimation}

\author{Ziwei Su}
\email{su230@purdue.edu}
\affiliation{%
  \institution{Department of Statistics, Purdue University, West Lafayette, IN; Industrial Engineering and Management Sciences, Northwestern University, Evanston, IL}
  \streetaddress{150 North University Street}
  \city{West Lafayette}
  \state{Indiana}
  \country{USA}
  \postcode{47907}
  }
\author{Raghu Pasupathy}
\email{pasupath@purdue.edu}
\affiliation{%
  \institution{Department of Statistics, Purdue University, West Lafayette, IN, USA; Department of Computer Science \& Engineering, IIT Madras, Chennai}
  \streetaddress{150 North University Street}
  \city{West Lafayette}
  \state{Indiana}
  \country{India}
  \postcode{47907}
}

\author{Yingchieh Yeh}
\affiliation{
  \institution{Institute of Industrial Management, National Central University}
  \city{Taoyuan}
  \country{Taiwan}}
\email{yeh@mgt.ncu.edu.tw}

\author{Peter W. Glynn}
\affiliation{%
  \institution{Department of Management Science and Engineering, Stanford University}
  \city{Palo Alto}
  \state{CA}
  \country{USA}
}
\email{glynn@stanford.edu}
\renewcommand{\shortauthors}{Su et al.}

\begin{abstract} \rp{
{We propose a general purpose confidence interval procedure (CIP) for statistical functionals constructed using data from a stationary time series. The procedures we propose are based on derived distribution-free analogues of the $\chi^2$ and Student's $t$ random variables for the statistical functional context, and hence apply in a wide variety of settings including quantile estimation, gradient estimation, M-estimation, CVAR-estimation, and arrival process rate estimation, apart from more traditional statistical settings. Like the method of subsampling, we use overlapping batches of time series data to estimate the underlying variance parameter; unlike subsampling and the bootstrap, however, we assume that the implied point estimator of the statistical functional obeys a central limit theorem (CLT) to help identify the weak asymptotics (called OB-x limits, x=I,II,III) of batched Studentized statistics. The OB-x limits, certain functionals of the Wiener process parameterized by the size of the batches and the extent of their overlap, form the essential machinery for characterizing dependence, and consequently the correctness of the proposed CIPs. The message from extensive numerical experimentation is that in settings where a functional CLT on the point estimator is in effect, using \emph{large overlapping batches} alongside OB-x critical values yields confidence intervals that are often of significantly higher quality than those obtained from more generic methods like subsampling or the bootstrap.  We illustrate using examples from CVaR estimation, ARMA parameter estimation, and NHPP rate estimation; R and MATLAB code for OB-x critical values is available at~\texttt{web.ics.purdue.edu/$\sim$pasupath}. } }
\end{abstract}

\begin{CCSXML}
<ccs2012>
 <concept>
  <concept_id>10010520.10010553.10010562</concept_id>
  <concept_desc>XXXXXXXXXXXXXX~XXXXXXXXXX</concept_desc>
  <concept_significance>500</concept_significance>
 </concept>
 <concept>
  <concept_id>10010520.10010575.10010755</concept_id>
  <concept_desc>XXXXX~XXXXX</concept_desc>
  <concept_significance>300</concept_significance>
 </concept>
 <concept>
  <concept_id>10010520.10010553.10010554</concept_id>
  <concept_desc>XXXXX~XXXXX</concept_desc>
  <concept_significance>100</concept_significance>
 </concept>
 <concept>
  <concept_id>10003033.10003083.10003095</concept_id>
  <concept_desc>XXXXX~XXXXX</concept_desc>
  <concept_significance>100</concept_significance>
 </concept>
</ccs2012>
\end{CCSXML}

\ccsdesc[500]{}
\ccsdesc[300]{}
\ccsdesc{}
\ccsdesc[100]{}

\keywords{to be filled}
\maketitle


\section{Introduction} \rp{Let $\{X_j, j \geq 1\}$ be an $S$-valued discrete-time, stationary, observable stochastic process having distribution $P$, and let $\theta: \mathcal{W} \to \Theta \subseteq \mathbb{R}$ denote a known \emph{statistical functional} (see~\cite[Chapter 6]{1980ser} or~\cite[Section 6.2]{1999leh}) defined on the space $\mathcal{W}$ of probability measures. In this paper, we propose an overlapping batch (OB) confidence interval procedure (CIP) to construct an interval $I_n \subset \mathbb{R}, I_n \in \sigma(X_1,X_2,\ldots,X_n)$ such that \begin{equation}\label{confint}\lim_{n \to \infty} \mathbb{P}(\omega: \theta(P) \in I_n(\omega)) = 1-\alpha,\end{equation} for any specified constant $\alpha \in (0,1)$. Importantly, since $\{X_n, n \geq 1\}$ is a time series, the dependence between random variables $X_n, n \geq 1$ is a key feature requiring careful treatment.}

\begin{remark} The initial segment $X_j, 1 \leq j \leq n$ of the observable process $\{X_j, j \geq 1\}$ is assumed to be a ``collected dataset'' or the output of a simulation that is exogenous to problem at hand. We assume no facility for variance reduction, e.g., by changing the measure governing the process $\{X_j, j \geq 1\}$, as is sometimes possible in simulation settings. See~\cite{2012chunak,2014nak,2018donnak,2011nak,2016graetal,2014donnak} for variance reduced confidence interval problems in the quantile context.
\end{remark}

\subsection{Motivation}\label{sec:motitaion}
\rp{Statistical functionals subsume a variety of interesting quantities arising in modern data settings, and are thus useful mathematical objects on which to construct confidence intervals. Consider, for instance, the following examples of statistical functionals. As a matter of notation, whenever relevant, $X: \Omega \to \mathcal{S}$ is an $\mathcal{S}$-valued random variable distributed according to $\mu$ and ``obtainable'' from the measure $P$ governing the observed stationary time series $\{X_j, j \geq 1\}$), and $s \in \mathcal{S}$ denotes an ``outcome" in $\mathcal{S}$. \setlist{nolistsep}\begin{itemize}[noitemsep] \item[(a)] Expectation. For $g: \mathcal{S} \to \mathbb{R}$, define the expectation $$\theta(P) :=  \mathbb{E}[g(X)] = \int_\mathcal{S} g(s)\,d\mu.$$   
\item[(b)] Quantile. For $g: \mathcal{S} \to \mathbb{R}$ and $\gamma \in (0,1)$, define the $(1-\gamma)$-quantile $$\theta(P) := \inf_{y \in \mathbb{R}} \{ \mu(g(X) \leq y) \geq 1-\gamma\}.$$ \item[(c)] Finite Difference Approximation. For $g: \mathcal{X} \times \mathcal{S} \to \mathbb{R}$ where $\mathcal{X} := \mbox{dom}(g) \subseteq \mathbb{R}^d$, define the finite-difference approximation of the directional derivative (assumed to exist) at $x \in \mbox{int}(\mathcal{X})$ along $u \in \mathbb{R}^d$: $$\theta(P) = \theta_{x,u,\epsilon}(P) :=  \frac{1}{\epsilon} \left(\mathbb{E}[g(x+\epsilon u,X) - g(x,X)]\right) = \frac{1}{\epsilon} \left(\int_{\mathcal{S}} \left(g(x+\epsilon u,s) - g(x,s)\right)\, d \mu\right).$$ \item[(d)] General Optimization. For $g: \mathcal{X} \times \mathcal{S} \to \mathbb{R}$, where $(\mathcal{X},d)$ is a metric space, $$\theta(P) := \inf_{x \in \mathcal{X}}  \mathbb{E}[g(x,X)] = \inf_{x \in \mathcal{X}} \int_\mathcal{S} g(x,s)\, d\mu.$$ \item[(e)]  Root Finding. For $g: \mathcal{X} \times \mathcal{S} \to \mathbb{R}$, $\theta(P) = \theta \in \mathcal{X}$ is such that  $$\int_\mathcal{S} g(\theta,s) \, d\mu = 0.$$ \item[(f)] Conditional Value at Risk (CVaR). For $0 < \gamma < 1$, and $g: \mathcal{S} \to \mathbb{R}$,  \begin{align*}\theta(P) = \theta_{\gamma}(P) &:= \mathbb{E}\left[ g(S) \, \vert \, g(S) > q_{\gamma} \right] = \frac{1}{P(g(S)>q_{\gamma})}\left(\int_{\mathcal{S}} g(s)\mathbb{I}\{g(s) > q_{\gamma}\}\,d\mu\right),\end{align*} where the $\gamma$-quantile $q_{\gamma} := \inf\left\{t \in \mathbb{R}: \int_{\mathcal{S}} \mathbb{I}\{g(s) \leq t\}\,d\mu \geq \gamma\right\}.$  
\item[(g)] ARMA($p$,$q$). The ARMA$(p,q)$ process is a discrete-time real-valued process $\{Y_t, t \geq 1\}$ having $p$ ``autoregressive'' parameters, $\phi_j, j=1,2,\ldots,p$, and $q$ ``moving average'' parameters, $\theta_j, j=1,2,\ldots,q$, and is expressed as $$Y_t = c + \sum_{j=1}^p \phi_j Y_{t-j} + \sum_{j=1}^q \theta_j \epsilon_{t-j} + \epsilon_t,$$ where $\{\epsilon_j, j \geq 1\}$ are independent and identically distributed (iid) random variables having mean zero and unit variance. Given observations $y_i, i=1,2,\ldots,n$ of the process $\{Y_t, t \geq 1\}$, the estimators $\hat{c}, \hat{\phi}_j, j=1,2,\ldots,p$ and $\hat{\theta}_j, j=1,2,\ldots,q$ of the parameters $c, \phi_j, j=1,2,\ldots,p$ and $\theta_j, j=1,2,\ldots,q$, are statistical functionals that can be estimated by minimizing the sum of squared residuals~\cite{2008crycha}: $$\underset{c,\{\phi_j, 1 \leq j \leq p\},\{\theta_j, 1\leq j \leq q\}}{\mbox{minimize: }} \,\, \sum_{i=p+1}^n \hat{\epsilon}_{i}^2,$$ where the residuals are given by: $$\hat{\epsilon}_i := \begin{cases} y_i - c - \sum_{j=1}^p \phi_jy_{i-j} - \sum_{j=1}^q \theta_j\hat{\epsilon}_{i-j} & \mbox{ if }i \geq p+1, \\ 0 & \mbox{ otherwise.} \end{cases} $$       \end{itemize} }   

In addition to the above examples, a wide variety of quantities arising within classical statistics, e.g., higher order moments, ratio of moments, \rp{clusters obtained through $k$-means clustering}, $\alpha$-trimmed mean, Mann-Whitney functional, and the \rp{simplicial depth} functional are all statistical functionals, making the question of constructing confidence intervals on statistical functionals of wide interest. (See~\cite[Chapter 7]{2013wel} and~\cite[Chapter 6]{1999leh} for other examples and a full treatment of statistical functionals.)

\begin{remark}
Whereas $\theta(P)$ in some of the examples listed above are naturally $\mathbb{R}^d$-valued with $d>1$, e.g., (g), the treatment in this paper is entirely real-valued, that is, $\theta(P) \in \mathbb{R}$. Extending our methods from $\mathbb{R}$ to $\mathbb{R}^d$ is straightforward but further extension into a function space involves non-trivial technical aspects.  
\end{remark}

\subsection{Notation and Terminology}\label{sec:notterm} (i) $\mathbb{N}$ refers to the set $\{1,2, \ldots,\}$ of natural numbers. (ii) $\mathbb{I}_{A}(x)$ is the indicator variable taking the value $1$ if $x \in A$ and $0$ otherwise. Also, depending on the context, we write $\mathbb{I}(A)$ where $\mathbb{I}(A)=1$ if the event $A$ is true and $0$ otherwise.  (iii) $Z(0,1)$ denotes the standard normal random variable, and $\chi^2_{\nu}$ refers to the chi-square random variable with $\nu$ degrees of freedom. (iv) $\{W(t), t  \geq 0\}$ refers to the standard Wiener process~\cite[Section 37]{1995bil}, and  $\{B(t), t \in [0,T]\}, \rp{B(t) = W(t) - \frac{t}{T}W(T)}$ refers to the Brownian bridge on $[0,T]$. (v) For a random sequence $\{X_n, n \geq 1\}$, we write $X_n \as X$ to refer to almost sure convergence, $X_n \inP X$ to refer to convergence in probability, and $X_n \inD X$ to refer to convergence in distribution (or weak convergence). (vi) We write $X \overset{\scriptsize{\mbox{d}}}{=} Y$ to mean that random variables $X$ and $Y$ have the same distribution.  (vii) The empirical measure $P_n$ constructed from the sequence $\{X_n, n \geq 1\}$ is given by $P_n(A) = n^{-1} \sum_{j=1}^n \mathbb{I}_{A}(X_j)$ for appropriate sets $A$.   

\subsection{Organization of the Paper}\label{sec:org} In the following section, we discuss literature on confidence intervals with a view toward providing perspective on how the proposed methods fit within the existing literature. This is followed by Section~\ref{sec:mainidea} where we present the main idea underlying the interval estimators we propose, along with a synopsis of results. Section~\ref{sec:mathprelim} includes key assumptions, followed by Section~\ref{sec:ob1}--\ref{sec:ob3} which present the theorems corresponding to the OB-I, OB-II, and OB-III limits. \rp{In Section~\ref{sec:postscript}, we present brief discussion on some implementation questions that we consider important.} We end with Section~\ref{sec:numerical} where numerical illustration using three different contexts illustrate the effectiveness of using large batch OB-I and OB-II confidence intervals.   

\section{Existing Literature, Perspective, and contribution}\label{sec:litrev}

\rp{In this section, we provide an overview of CIPs in general through a taxonomy that categorizes CIPs into those that assume a CLT is in effect and those that do not. We discuss CLT-based methods, followed by further perspective and a summary of the current paper's position within this landscape. (We include a concise description of the two most famous non-CLT-based methods, subsampling and bootstrapping, in Appendix~\ref{sec:nonCLT}.)}

\subsection{CLT-based Methods}\label{sec:taxonomy}
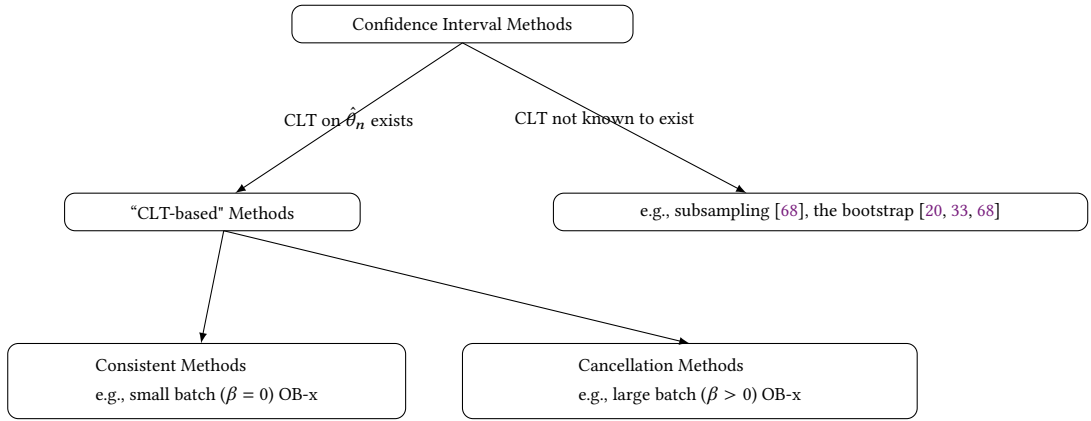
\begin{figure}[htb]\label{fig:taxonomy}
\begin{center}
\begin{tikzpicture}
\draw[rounded corners] (0.65\textwidth, 0) rectangle (0.95\textwidth, 0.5) node[black,midway]{\footnotesize Confidence Interval Methods};
\draw[thin,-{latex}] (0.8\textwidth,0) -- (0.6\textwidth,-2) node[black,midway,xshift=0cm] {\footnotesize CLT on $\hat{\theta}_n$ exists};
\draw[thin,-{latex}] (0.8\textwidth,0) -- (1.05\textwidth,-2) node[black,midway,xshift=0cm] {\footnotesize CLT not known to exist};
\draw[rounded corners] (0.88\textwidth, -2.5) rectangle (1.35\textwidth, -2) node[black,midway]{\footnotesize e.g., subsampling~\cite{1999polromwol}, the bootstrap~\cite{1997davhin,1999polromwol,1998efrtib}};
\draw[rounded corners] (0.45\textwidth, -2.5) rectangle (0.71\textwidth, -2) node[black,midway]{\footnotesize ``CLT-based" Methods};
\draw[thin,-{latex}] (0.59\textwidth,-2.5) -- (0.57\textwidth,-4) node[black,midway,xshift=0cm] {};
\draw[rounded corners] (0.4\textwidth, -5) rectangle (0.75\textwidth, -4) node[black,align=left,midway]{\footnotesize Consistent Methods \\ \footnotesize e.g., small batch ($\beta=0$) OB-x};
\draw[thin,-{latex}] (0.59\textwidth,-2.5) -- (1\textwidth,-4) node[black,midway,xshift=0cm] {};
\draw[rounded corners] (0.8\textwidth, -5) rectangle (1.2\textwidth, -4) node[black,align=left,midway]{\footnotesize Cancellation Methods \\ \footnotesize e.g., large batch ($\beta>0$) OB-x};
\end{tikzpicture}
\end{center}

    \caption{A taxonomy of methods for constructing confidence intervals on statistical functionals. Consistent methods construct a consistent estimator of the variance constant $\sigma$, while cancellation methods allow the use of large batches and construct ratio estimators that ``cancel out" the variance constant $\sigma$. }\label{fig:taxonomy}
\end{figure}
Analogous to the taxonomy~\cite{1985glyigl} of CIPs on the steady-state mean of a real-valued process, it is instructive to categorize CIPs for statistical functionals based on whether a central limit theorem of the form \begin{equation}\label{firstclt}\sqrt{n}(\hat{\theta}_n - \theta(P)) \inD \sigma Z(0,1)\end{equation} exists. In~\eqref{firstclt}, $\hat{\theta}_n$ is an implied point estimator of $\theta(P)$ constructed from the time series $\{X_n, n \geq 1\}$, $Z(0,1)$ is the standard normal random variable, and $\sigma \in (0,\infty)$ is an unknown parameter often called the variance constant. Further, and as depicted in Figure~\ref{fig:taxonomy}, a CIP that assumes~\eqref{firstclt} may either be a \emph{consistent method} by which we mean that the CIP constructs another observable process $\{\hat{\sigma}_n, n \geq 1\}$ from $\{X_n, n\geq 1\}$ to consistently estimate $\sigma$, that is, \begin{equation}\label{consissigma}\hat{\sigma}_n \inP \sigma \mbox{ as } n \to \infty;\end{equation} or \rp{a \emph{cancellation method} by which we mean that the CIP constructs a process $\{Y_n, n\geq 1\}$ such that \begin{equation}\label{jtconvcancel}(\sqrt{n}(\hat{\theta}_n - \theta(P)),Y_n) \inD (\sigma Z(0,1),\sigma Y) \mbox{ as } n \to \infty,\end{equation} and $Y$ is a well-defined non-vanishing random variable whose distribution is free of unknown quantities, e.g., $\sigma$ and $\theta(P)$. (The canonical $\sqrt{n}$ scaling in~\eqref{jtconvcancel} can be generalized to other scalings, as considered in~\cite{1992glywhi}.) } 

In consistent methods, since~\eqref{firstclt} and~\eqref{consissigma} hold, Slutsky's theorem~\eqref{thm:slutsky} assures us that an asymptotically valid two-sided $(1-\alpha)$ confidence interval on $\theta(P)$ is $$(\hat{\theta}_n - z_{1-\alpha/2}\frac{\hat{\sigma}_n}{\sqrt{n}}, \hat{\theta}_n + z_{1-\alpha/2}\frac{\hat{\sigma}_n}{\sqrt{n}}),$$ where $z_{1-\alpha/2}$ is the $1- \alpha/2$ quantile of the standard normal distribution. It is in this sense that a consistent method essentially reduces the confidence interval construction problem into the often nontrivial problem~\cite{2012chunak,1996gly,2007asmgly} of consistently estimating the variance parameter $\sigma$. Various consistent methods exist in the steady-state mean context. For example, the regenerative method~\cite{1977cralem,1978igl}, the spectral procedure~\cite{1950bar,1967wel,1991dam,1994dam,1995dam} with certain restrictions on the bandwidth, and the batch means procedure where the variance parameter is estimated using one of various well-established methods, e.g., nonoverlapping batch means (NBM)~\cite{2007aleetal}, overlapping batch means (OBM)~\cite{2007aleetal}, Cram\'{e}r-von Mises (CvM) estimator~\cite{2007aleetal}, provided the batch size tends to infinity in a way that the batch size expressed as a fraction of the total data size tends to zero. See~\cite{2007aleetal,2009aktalegolwil} and references therein for a thorough account on estimating the variance parameter associated with a steady-state real-valued process.

In contrast to consistent methods, cancellation methods are based on the important idea that $\sigma$ need not be estimated consistently to construct a valid confidence interval on $\theta(P)$. This seems to have been first observed in the seminal account~\cite{1983sch} introducing standardized time series in the context of constructing confidence intervals on the steady state mean. Specifically, in cancellation methods, since~\eqref{firstclt} and~\eqref{jtconvcancel} hold, and $Y$ is non-vanishing, applying the continuous mapping theorem~\cite{1999bil} leads to ``cancellation'' of $\sigma$ in the sense that \begin{equation}\label{cancellation} \frac{\sqrt{n}(\hat{\theta}_n - \theta(P))}{Y_n}  \inD \frac{\cancel{\sigma} \, Z(0,1)}{\cancel{\sigma} \, Y}, \end{equation} leading to the two-sided $(1-\alpha)$ confidence interval $$(\hat{\theta}_n - y_{\alpha/2}\frac{Y_n}{\sqrt{n}}, \hat{\theta}_n + y_{1-\alpha/2}\frac{Y_n}{\sqrt{n}}),$$ where $y_{q}$ is the $q$-quantile of $Z(0,1)/Y$. If constructing a consistent estimator of $\sigma$ is the principal challenge in consistent methods, selecting $Y_n$ and characterizing $Y$ turns out to be the principal challenge in cancellation methods. Cancellation methods have been  studied~\cite{1983sch,1990golsch,1990glyigl,1991mun,2006calnak} in the context of constructing confidence intervals on the steady-state mean, and more recently for quantiles --- see the exceptionally well-written articles~\cite{2013calnak,2020huinak}.    

\subsection{Further Perspective and Summary of Contribution}

The uniqueness of any CIP (including subsampling, the bootstrap, and what we propose here) stems from the manner in which the procedure approximates the sampling distribution of its chosen statistic. So, while subsampling uses the empirical cdf $L_n$ in~\eqref{subsamplingstat2} formed from subsamples, and the bootstrap uses resampling, the methods proposed in this paper approximate the sampling distribution of the Studentized statistic $(\hat{\theta}_n - \theta(P))/\hat{\sigma}_n$ by characterizing its weak limit. In particular, we assume the existence of a functional CLT governing $\hat{\theta}_n$ and exploit the resulting structure to characterize the weak limit of $(\hat{\theta}_n - \theta(P))/\hat{\sigma}_n$. 

To be clear, neither subsampling nor the bootstrap assume a CLT on $\hat{\theta}_n$, and this is their strength. \rp{(Specifically, the bootstrap and subsampling only assume the existence of the scaled weak limit on $\hat{\theta}_n$; they do not assume, for instance, that $J(P)$ in~\eqref{subsamplingstat1} is standard normal.)} However, our argument is that there exist numerous important contexts where a functional CLT on $\hat{\theta}_n$ holds and can be usefully exploited if we can identify the weak limit of the statistic in use. For example, vis-\`{a}-vis subsampling, knowledge of the weak limit allows replacing the empirical quantiles $c_{n,q}$ in~\eqref{subsamplingconf} by their limiting counterparts, in the process allowing to dispense with subsampling's key stipulation that batch sizes be small, that is, $m_n/n \to 0$.

To further clarify, we now provide a summary of contribution.

\begin{enumerate} \item This work presents CLT-based overlapping batch CIPs for constructing confidence intervals on \emph{statistical functionals}. There exists a well-developed literature on CLT-based OB CIPs for the steady-state mean, and more recently for quantiles, but the only treatment of statistical functionals through CLT-based methods that we are aware of is~\cite[Section 2.4]{1991mun}.  
\item We derive the weak limits (called OB-x limits, x=I,II,III) of the statistic underlying each of the proposed OB CIPs. Of these, the OB-II limit and its bias-correction factor (Theorem~\ref{thm:ob2largebatch}) have not appeared in the literature even in the steady-state mean context to the best of our knowledge; OB-II might prove to be especially relevant in computationally intensive settings. The OB-I and OB-III limits (Theorem~\ref{thm:ob1largebatch} and Theorem~\ref{thm:ob3largebatch}, respectively) have appeared in the literature but in the steady-state mean~\cite{2009aktalegolwil,2007aleetal} and the quantile~\cite{2013calnak} contexts. The asymptotic moment expression for the OB-I limit (Theorem~\ref{thm:OB1moments}) has not appeared elsewhere but the corresponding result for the special case of fully overlapping batches in the steady-state mean context appeared in~\cite{1995dam}.

\item To aid future investigation of computationally intensive contexts, our analysis of overlapping batches is general in the sense that it introduces an offset parameter $d_n$ whose value connotes the extent of batching, e.g., \rp{$d_n=1$ connotes fully overlapping batches and $d_n \geq m_n$ connotes non-overlapping batches with $d_n > m_n$ corresponding to what has been called \emph{spaced batch means}~\cite{1991foxgolswa}. We shall see (Theorem~\ref{thm:OB1moments}) that the effect of the extent of overlap features prominently in the asymptotic variance of the variance estimator.}

\item Extensive numerical experimentation over a variety of applications indicates that cancellation methods resulting from the use of large batches, that is, when $m_n/n \to \beta >0$, exhibits behavior that is consistently better. Aspects responsible for such better behavior are not yet fully understood and should form the topic of future investigation. 

\item We provide access to code (that includes a critical value calculation module for OB-I, OB-II, and OB-III) for constructing confidence intervals on a statistical functional using our recommended OB-x methods. 

\end{enumerate}

\section{Main Idea and Synopsis of Results}\label{sec:mainidea}

To set the stage for precisely describing the proposed confidence interval procedure, consider partitioning the available ``data'' $X_1,X_2, \ldots,X_n$ into $b_n$ possibly overlapping batches each of size $m_n$ as shown in Figure~\ref{batching}. The first of these batches consisting of observations $X_1, X_2, \ldots, X_{m_n}$, the second consisting of observations $X_{d_n + 1}, X_{d_n + 2}, \ldots, X_{d_n + m_n}$, and so on, and the last batch consisting of observations $X_{(b_n-1)d_n+1}, X_{(b_n-1)d_n+2}, \ldots, X_{n}$. The quantity $d_n \geq 1$ represents the offset between batches, with the choice $d_n=1$ corresponding to ``fully-overlapping'' batches and any choice $d_n \geq m_n$ corresponding to ``non-overlapping'' batches. Notice then that the offset $d_n$ and the number of batches $b_n$ are related as \begin{equation}\label{offsetbatchrel}d_n = \frac{n-m_n}{b_n-1}.\end{equation} Suppose that the batch  size $m_n$ and the number of batches $b_n$ are chosen so that the following limits exist:

\begin{equation} \label{keylimits} \lim_{n \to \infty} \frac{m_n}{n} = \beta \in [0,1); \quad \lim_{n \to \infty} b_n = b_{\infty} \in \{2,3, \ldots, \infty\}.\end{equation} 

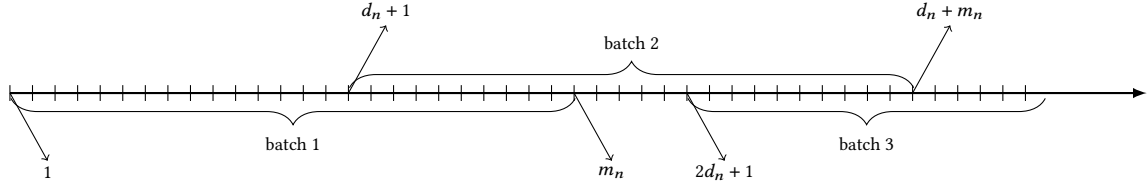
\begin{figure}[h]
\begin{tikzpicture}
\draw[thick,-{latex}] (0,0) -- (\textwidth,0);
\foreach \x in {0,0.3,...,13.75}
  \draw (\x,3pt) -- (\x,-3pt); 
\draw [thin, black,decorate,decoration={brace,amplitude=10pt,mirror},xshift=0.4pt,yshift=-2pt](0,0) -- (7.5,0) node[black,midway,yshift=-0.6cm] {\footnotesize batch $1$};
\draw [thin, black,decorate,decoration={brace,amplitude=10pt},xshift=0.4pt,yshift=2pt](4.5,0) -- (12,0) node[black,midway,yshift=0.6cm] {\footnotesize batch $2$};
\draw [thin, black,decorate,decoration={brace,amplitude=10pt,mirror},xshift=0.4pt,yshift=-2pt](9,0) -- (13.75,0) 
node[black,midway,yshift=-0.6cm] {\footnotesize batch $3$};
\draw[arrows=->,line width=.4pt](0,0)--(0.5,-0.9) node[black,yshift=-0.15cm] {\footnotesize $1$};
\draw[arrows=->,line width=.4pt](7.5,0)--(8,-0.9) node[black,yshift=-0.15cm] {\footnotesize $m_n$};
\draw[arrows=->,line width=.4pt](4.5,0)--(5,0.9) node[black,yshift=0.2cm] {\footnotesize $d_n +1$};
\draw[arrows=->,line width=.4pt](12,0)--(12.5,0.9) node[black,yshift=0.2cm] {\footnotesize $d_n + m_n$};
\draw[arrows=->,line width=.4pt](9,0)--(9.5,-0.9) node[black,yshift=-0.15cm] {\footnotesize $2d_n + 1$};
\end{tikzpicture}
\caption{The figure depicts partially overlapping batches. Batch 1 consisting of observations $X_j, j = 1,2,\ldots,m_n$; batch 2 consisting of observations $X_j, j=d_n + 1, d_n + 2, \ldots, d_n + m_n$, and so on, with batch $i$ consisting $X_j, j=(i-1)d_n + 1, (i-1)d_n + 2, \ldots,(i-1)d_n+m_n.$ There are thus $b_n := d_n^{-1}(n-m_n) + 1$ batches in total, where $n$ is the size of the dataset.} \label{batching}
\end{figure} 

Note that $\beta = 0$ and $b_{\infty}= \infty$ are allowed in~\eqref{keylimits}. We will sometimes refer to $\beta$ as the \emph{asymptotic batch size} and to $b_{\infty}$ as the \emph{asymptotic number of batches.} Also, we will refer to $\beta=0$ as the \emph{small batch} regime, and to $\beta>0$ as the \emph{large batch} regime.

\subsection{``Centering'' the Confidence Interval}\label{sec:ptest}
Suppose we have at our disposal a method to construct a point estimator $\hat{\theta}(\{X_j, \ell \leq j \leq u\})$ of $\theta(P)$ using any batch $(X_{\ell}, X_{\ell+1}, \ldots, X_{u})$, $\ell, u \in \{1,2,\ldots,n\}$ of consecutive observations from the available data $X_j, 1 \leq j \leq n$. For now, we place no restrictions on $\hat{\theta}(\{X_j, \ell \leq j \leq u\})$ but a natural choice for $\hat{\theta}(\{X_j, \ell \leq j \leq u\})$, especially in the non-parametric setting, is the ``plug-in'' estimator $\theta(P_{\ell,u}),$ where $P_{\ell,u}$ is the empirical measure constructed from $X_j, \ell \leq j \leq u$. For example, when $X_j, 1 \leq j \leq n$ are real-valued and $\theta(P)$ is the population mean $\mathbb{E}[X_1] = \int x\,  P(\diff x)$, the point estimator $\hat{\theta}(\{X_j, \ell \leq j \leq u\})$  is the sample mean of the observations $X_j, \ell \leq j \leq u$; and likewise, when $\theta(P)$ is the $\gamma$-quantile $\min\{x: P(X_1 \leq x) \geq \gamma\}$ of $X_1$, the natural choice for the point estimator is the empirical quantile $F_{\ell,u}^{-1}(\gamma) : = \min\{x: F_{\ell,u}(x) \geq \gamma\}$, where $F_{\ell,u}(x) = (u-\ell+1)^{-1}\sum_{j=\ell}^u \mathbb{I}_{\{X_j \leq x\}}, x \in \mathbb{R}$ is the usual empirical cumulative distribution function (cdf) constructed from the observations $X_j, \ell \leq j \leq u$. 

When $\ell=1$ and $u=n$, that is, all available observations are utilized in constructing the point estimator of $\theta(P)$, we obtain what is often called the \emph{sectioning estimator}~\cite{2014nak}, given special notation here since we heavily invoke this estimator throughout the rest of the paper: \begin{equation}\label{est:section} \hat{\theta}_n := \hat{\theta}(\{X_j, 1 \leq j \leq n\}). \end{equation} The  \emph{asymptotic variance parameter} $\sigma^2$, assumed to exist and defined as \begin{equation}\label{varpardef}\sigma^2 : = \lim_{n \to \infty} \mathbb{E}\left[ \left(\sqrt{n} (\hat{\theta}_n - \theta(P))\right)^2\right],\end{equation} will play a crucial role in our later analysis. Also, owing to the manner in which we will construct batches, we use special notation for the point estimators constructed from the observations $(X_{(i-1)d_n + 1}, X_{(i-1)d_n + 2}, \ldots, X_{(i-1)d_n + m_n})$ in the $i$-th batch (see Figure~\ref{batching}):  \begin{equation}\label{batchest} \hat{\theta}_{i,m_n} : = \hat{\theta}(\{X_j, (i-1)d_n + 1 \leq j \leq (i-1)d_n + m_n\}), i = 1,2, \ldots, b_n\end{equation} and $b_n = \frac{n-m_n}{d_n} + 1.$ 

We shall see shortly that the sectioning estimator appearing in~\eqref{est:section} is a candidate for centering the confidence interval that we construct. An alternative to the sectioning estimator is the \emph{batching estimator}~\cite{2014nak}, obtained by averaging the point estimators $\hat{\theta}_{i,m_n}, i = 1,2,\ldots,b_n$, that is, \begin{equation}\label{est:batch} \bar{\theta}_n := \frac{1}{b_n} \sum_{i=1}^{b_n} \hat{\theta}_{i,m_n}. \end{equation} 

The sectioning and batching point estimators are the two natural choices for ``centering'' the confidence intervals on $\theta(P)$. We will see that confidence intervals constructed with the batching estimator might be especially useful in computationally intensive contexts. 

\subsection{Estimating the Variance Constant $\sigma^2$}\label{sec:varest}
Since the variance constant $\sigma^2$ (defined in~\eqref{varpardef}) is a measure of the inherent variability of the point estimator $\hat{\theta}_n$, $\sigma^2$'s estimation plays a key role in the confidence intervals we construct. The expression in~\eqref{varpardef} suggests that a natural estimator of $\sigma^2$ is the sample variance of $\hat{\theta}_{i,m_n}, i=1,2,\ldots,b_n$ defined in~\eqref{batchest}, after appropriate scaling:
\begin{equation} \label{varparest} \hat{\sigma}^2_{\mbox{\tiny OB-I}}(m_n,b_n) := \frac{1}{\kappa_1(\beta)}\frac{m_n}{b_n} \sum_{i=1}^{b_n} (\hat{\theta}_{i,m_n} - \hat{\theta}_n)^2, \quad \kappa_1(\beta) =  1- \beta,\end{equation} where $\beta$ defined in~\eqref{keylimits} is the limiting batch size. It will become clear from our later analysis that $\kappa_1(\beta)$ appearing in~\eqref{varparest} is a ``bias-correction'' constant introduced to make $\hat{\sigma}^2_{\mbox{\tiny OB-I}}(m_n,b_n)$ asymptotically unbiased.

Notice that the estimator $\hat{\sigma}^2_{\mbox{\tiny OB-I}}(m_n,b_n)$ of the variance constant $\sigma^2$ appearing in~\eqref{varparest} uses the sectioning estimator $\hat{\theta}_n$ when computing the sample variance. An alternative is to use the batching estimator $\bar{\theta}_n$ in place of the sectioning estimator to obtain the second candidate estimator of the variance constant $\sigma^2$:
\begin{equation} \label{varparestbatch} \hat{\sigma}^2_{\mbox{\tiny OB-II}}(m_n,b_n) := \frac{1}{\kappa_2(\beta,b_{\infty})}\frac{m_n}{b_n} \sum_{i=1}^{b_n} (\hat{\theta}_{i,m_n} - \bar{\theta}_n)^2,\end{equation} where, as we shall see in Theorem~\ref{thm:ob2largebatch}, the bias-correction constant has the more complicated form \begin{equation}\label{biascorrect2} \kappa_2(\beta,b_{\infty}) := \begin{cases} 1 & \beta =0; \\ 1 - 2\left(\min\{\frac{\beta}{1-\beta},1\}\right) + \frac{1}{\beta}\left(\min\{\frac{\beta}{1-\beta},1\}\right)^2 - \frac{2}{3}\frac{1-\beta}{\beta}\left(\min\{\frac{\beta}{1-\beta},1\}\right)^3 & \beta>0, b_{\infty} = \infty; \\
1- \frac{1}{b_{\infty}} - \frac{2}{b_{\infty}}\sum_{h=1}^{b_{\infty}} \left(1 - \frac{h}{b_{\infty}-1} \frac{1-\beta}{\beta}\right)^+(1-h/b_{\infty}) & b_{\infty} \in \mathbb{N}\setminus {1},\end{cases} \end{equation} and $b_{\infty}$ defined in~\eqref{keylimits} is the limiting number of batches. 

A third estimator $\hat{\sigma}^2_{\mbox{\tiny OB-III}}(m_n,b_n)$ of the variance constant $\sigma^2$ that we consider, called the weighted area estimator~\citep{1983sch,2007aleetal,1990golsch,1990golmeksch}, is given as follows:
\begin{equation} \label{varpareststs} \hat{\sigma}^2_{\mbox{\tiny OB-III}}(m_n,b_n) := \frac{1}{b_n} \sum_{i=1}^{b_n} A_{i,m_n}, \end{equation} where \begin{equation}\label{areaest} A_{i,m_n} := \left(\frac{1}{m_n} \sum_{j=1}^{m_n} f(\frac{j}{m_n}) \, \sigma  \, T_{i,m_n}(\frac{j}{m_n})\right)^2; \quad T_{i,m_n}(t) := \frac{\lfloor m_n t \rfloor \left(\hat{\theta}_{i,\lfloor m_nt \rfloor} - \hat{\theta}_{i,m_n}\right)}{\sigma \sqrt{m_n}}, t \in [0,1],\end{equation} and $f: [0,1] \to \mathbb{R}^+$ is a chosen weighting function that satisfies \rp{\begin{equation}\label{weightingcond} \mathbb{E}\left [ \left(\int_0^1 f(t) B_0(t) \, dt \right)^2 \right ]= 1; \mbox{ and } f \in C^2[0,1],\end{equation} where $B(t), t \in [0,1]\}$ is the Brownian bridge on $[0,1]$ (see Section~\ref{sec:notterm}) and $C^2[0,1]$ is the space of twice continuously differentiable functions on $[0,1]$.} The structure of the ``standardized time series'' $\{T_{i,m_n}(t), t \in [0,1]\}$ in~\eqref{areaest} hints at  why  $\hat{\sigma}^2_{\mbox{\tiny OB-III}}(m_n,b_n)/\sigma^2$ is an analogue of the classical chi-square random variable. Specifically, notice that $\{T_{\lfloor sm_n \rfloor,m_n}(t), t \in [0,1]\}$ for each $s \in [0, \infty)$ should converge weakly (as $m_n \to \infty$), modulo some regularity conditions, to the standard Brownian bridge $$B_s(t) := \left\{W(s+t) - W(s) - t(W(s+1) - W(s)), t \in [0,1]\right\}, \quad s \in [0,\infty).$$ Correspondingly, and since $\int_0^1 f(t) B_s(t) \overset{d}{=} Z(0,1)$ if $f$ is chosen as stipulated in~\eqref{weightingcond}, $A_{\lfloor sm_n \rfloor,m_n}$ should converge weakly to $\sigma^2Z^2(0,1) \overset{d}{=} \sigma^2\chi^2_1$, in effect justifying the weighted area estimator $\hat{\sigma}^2_{\mbox{\tiny OB-III}}(m_n,b_n)$. 

The weighted area estimator appearing in~\eqref{varpareststs} has been the topic of much research over  the last three decades in the context of estimating the variance constant of a stationary time series. See~\cite{2007aleetal,1983sch,1999folgol,1990glyigl,1990golsch} for a detailed account that includes treatment of other estimators of the variance constant.

\subsection{Structure of the Proposed Confidence Intervals}\label{sec:struct}
The proposed interval has the same elements as a classical confidence interval, namely: \begin{enumerate} \item[(A)] a ``centering'' variable, e.g., the sectioning estimator $\hat{\theta}_n \in  \mathbb{R}$, or the batching estimator $\bar{\theta}_n \in  \mathbb{R}$, as described in Section~\ref{sec:ptest}; \item[(B)] a point estimator of the asymptotic variance $\sigma^2$, e.g., $\hat{\sigma}^2_{\mbox{\tiny OB-x}}(m_n,b_n), {\rm{x = I, II, III}}$; and \item[(C)] a statistic  whose weak limit supplies the critical values associated with the confidence interval. \end{enumerate}  
Once the elements in (A)--(C) are specified, a $(1-\alpha)$ confidence interval on $\theta(P)$ can then be constructed in the usual way. 

For example, when the sectioning estimator $\hat{\theta}_n$ is used in (A), the variance estimator $\hat{\sigma}^2_{\mbox{\tiny OB-I}}(m_n,b_n)$ is used in (B), and the Studentized root    \begin{equation}\label{root1}  T_{\mbox{\tiny OB-I}}(m_n,b_n) := \frac{\sqrt{n}(\hat{\theta}_n - \theta(P))}{\hat{\sigma}_{\mbox{\tiny OB-I}}(m_n,b_n)} \inD T_{\mbox{\tiny OB-I}}(\beta,b_{\infty}),\end{equation} is used in (C), we obtain the (two-sided) confidence interval \begin{equation}\label{twosidedconf1} \left\{y \in \mathbb{R}: -t_{\mbox{\tiny OB-I},{\tiny 1-\frac{\alpha}{2}}}(\beta,b_{\infty}) \leq \frac{\sqrt{n}(\hat{\theta}_n - y)}{\hat{\sigma}_{\mbox{\tiny OB-I}}(m_n,b_n)} \leq t_{\mbox{\tiny OB-I},{\tiny 1-\frac{\alpha}{2}}}(\beta,b_{\infty})\right\}, \end{equation} where $$t_{\mbox{\tiny OB-I},q}(\beta,b_{\infty}) = \inf\left\{r: P(T_{\mbox{\tiny OB-I}}(\beta,b_{\infty}) \leq r) = q\right\}, \quad q \in (0,1)$$ is the $q$-quantile (or critical value) of the random variable $T_{\mbox{\tiny OB-I}}(\beta,b_{\infty})$. (A one-sided confidence interval analogous to~\eqref{twosidedconf1} is straightforward.)

Similarly, using the batching estimator $\bar{\theta}_n$ in (A), the variance estimator $\hat{\sigma}^2_{\mbox{\tiny OB-II}}(m_n,b_n)$ in (B), and the Studentized root   \begin{equation}\label{root2}  T_{\mbox{\tiny OB-II}}(m_n,b_n) := \frac{\sqrt{n}(\bar{\theta}_n - \theta(P))}{\hat{\sigma}_{\mbox{\tiny OB-II}}(m_n,b_n)} \inD T_{\mbox{\tiny OB-II}}(\beta,b_{\infty}),\end{equation} in (C), we obtain our second proposed (two-sided) confidence interval \begin{equation}\label{twosidedconf2} \left\{y \in \mathbb{R}:  -t_{\mbox{\tiny OB-II},{\tiny 1-\frac{\alpha}{2}}}(\beta,b_{\infty}) \leq \frac{\sqrt{n}(\bar{\theta}_n - y)}{\hat{\sigma}_{\mbox{\tiny OB-II}}(m_n,b_n)} \leq t_{\mbox{\tiny OB-II},{\tiny 1-\frac{\alpha}{2}}}(\beta,b_{\infty})\right\}, \end{equation} where $$t_{\mbox{\tiny OB-II},q}(\beta,b_{\infty}) = \inf\left\{r: P(T_{\mbox{\tiny OB-II}}(\beta,b_{\infty}) \leq r) = q\right\}, \quad q \in (0,1)$$ is the $q$-quantile (or critical value) of the random variable $T_{\mbox{\tiny OB-II}}(\beta,b_{\infty})$. 

And, finally, using the sectioning estimator $\hat{\theta}_n$ in (A), the variance estimator $\hat{\sigma}^2_{\mbox{\tiny OB-III}}(m_n,b_n)$ in (B), and the Studentized root    \begin{equation}\label{root3}  T_{\mbox{\tiny OB-III}}(m_n,b_n) := \frac{\sqrt{n}(\bar{\theta}_n - \theta(P))}{\hat{\sigma}_{\mbox{\tiny OB-III}}(m_n,b_n)} \inD T_{\mbox{\tiny OB-III}}(\beta,b_{\infty}),\end{equation} in (C), we obtain our third proposed (two-sided) confidence interval \begin{equation}\label{twosidedconf3} \left\{y \in \mathbb{R}: -t_{\mbox{\tiny OB-III},{\tiny 1-\frac{\alpha}{2}}}(\beta,b_{\infty}) \leq  \frac{\sqrt{n}(\hat{\theta}_n - y)}{\hat{\sigma}_{\mbox{\tiny OB-III}}(m_n,b_n)} \leq t_{\mbox{\tiny OB-III},{\tiny 1-\frac{\alpha}{2}}}(\beta,b_{\infty})\right\}, \end{equation} where $$t_{\mbox{\tiny OB-III},q}(\beta,b_{\infty}) = \inf\left\{r: P(T_{\mbox{\tiny OB-III}}(\beta,b_{\infty}) \leq r) = q\right\}, \quad q \in (0,1)$$ is the $q$-quantile (or critical value) of the random variable $T_{\mbox{\tiny OB-III}}(\beta,b_{\infty})$.

\begin{remark}
Sometimes $\theta$ is known to reside in a constrained set $\Theta \subset \mathbb{R}$, in which case the sectioning estimator $\hat{\theta}_n$, and all the batch estimators $\hat{\theta}_{i,m_n}, i = 1,2,\ldots,b_n$ should be suitably projected onto $\Theta$, as should the intervals in~\eqref{twosidedconf1},~\eqref{twosidedconf2}, and~\eqref{twosidedconf3}. \rp{This may cause a corresponding change in the weak limits along with the critical values, a line of investigation we do not pursue.} 
\end{remark}

The preceding discussion should emphasize that the Studentized root $T_{\mbox{\tiny OB-x}}(m_n,b_n), {\rm x = I, II, III}$ forms the essential element of the confidence intervals we propose. And, since the exact distribution of  $T_{\mbox{\tiny OB-x}}(m_n,b_n), {\rm x = I, II, III}$ is unknown in general, the outlined procedure approximates its distribution by the (purported) weak limit $T_{\mbox{\tiny OB-x}}(\beta,b_{\infty}), {\rm x = I, II, III}$.

\subsection{Synopsis of Results}\label{sec:confints}
\begin{table}\label{synopsis}
\caption{A synopsis of results. In the service of constructing confidence intervals on $\theta(P)$, we construct three Studentized roots $T_{\mbox{\tiny OB-x}}(m_n,b_n)$, x=I,II,III obtained using combinations of candidates for the point estimator of $\theta(P)$ and for the point estimator of $\sigma^2$. The three Studentized roots give rise to the OB-\mbox{x, x=I,II,III} weak limits, whose nature depends on the limiting batch size $\beta := \lim_{n\to \infty} m_n/n$ and the limiting number of batches $b_{\infty} := \lim_{n \to \infty} b_n$. Expressions for the weak limits $T_{\mbox{\tiny OB-x}}(\beta,b_{\infty})$, x=I,II,III appear in Theorems~\ref{thm:ob1largebatch}--\ref{thm:ob3largebatch}. Critical values for the OB-I and OB-II distributions appear on page 17 and page 24.}
\begin{tabular}{c c c l c}
\toprule
 & Centering Var. & Batch Regime & {\hspace{0.2in} Variance Estimator}  & {Statistic} \\
 &  & ($m_n/n \to \beta \overset{?}{=} 0)$ & $\hspace{0.25in} (\hat{\sigma}^2_{\mbox{\tiny OB-x}}(m_n,b_n) \to ?)$  & $\hspace{0.0in} (T_{\mbox{\tiny OB-x}}(m_n,b_n) \inD ?)$ \\
\midrule
\multicolumn{1}{l}{\multirow{2}{*}{OB-I}} & \multicolumn{1}{c}{\multirow{2}{*}{$\hat{\theta}_n$}} & $\beta=0$ & $\hat{\sigma}^2_{\mbox{\tiny OB-I}}(m_n,b_n) \inP \sigma^2$ &  $Z$ \\ 
\multicolumn{1}{r}{}  &  & $\beta > 0$          & $\hat{\sigma}^2_{\mbox{\tiny OB-I}}(m_n,b_n) \inD \sigma^2\chi^2_{\mbox{\tiny OB-I}}(\beta,b_{\infty})$  & $T_{\mbox{\tiny OB-I}}(\beta,b_{\infty})$     \\ \hline

\multicolumn{1}{l}{\multirow{2}{*}{OB-II}} & \multicolumn{1}{c}{\multirow{2}{*}{$\bar{\theta}_n$}} & $\beta=0$ & $\hat{\sigma}^2_{\mbox{\tiny OB-II}}(m_n,b_n) \inP \sigma^2$  & $Z$     \\ 
\multicolumn{1}{r}{} &  & $\beta>0$ & $\hat{\sigma}^2_{\mbox{\tiny OB-II}}(m_n,b_n) \inD \sigma^2\chi^2_{\mbox{\tiny OB-II}}(\beta,b_{\infty})$  & $T_{\mbox{\tiny OB-II}}(\beta,b_{\infty})$  \\ \hline

\multicolumn{1}{l}{\multirow{2}{*}{OB-III}} & \multicolumn{1}{c}{\multirow{2}{*}{$\hat{\theta}_n$}} & $\beta=0$ & $\hat{\sigma}^2_{\mbox{\tiny OB-III}}(m_n,b_n) \inP \sigma^2$  & $Z$   \\ 
\multicolumn{1}{r}{} &  & $\beta>0$  & $\hat{\sigma}^2_{\mbox{\tiny OB-III}}(m_n,b_n) \inD \sigma^2\chi^2_{\mbox{\tiny OB-III}}(\beta,b_{\infty})$  & $T_{\mbox{\tiny OB-III}}(\beta,b_{\infty})$  \\ \bottomrule
\end{tabular}\label{synopsis}
\end{table}
The proposed intervals~\eqref{twosidedconf1},~\eqref{twosidedconf2}, and~\eqref{twosidedconf3} rely crucially on the existence of the following weak limits:   \begin{align} T_{\mbox{\tiny OB-I}}(m_n,b_n) &:= \frac{\sqrt{n}(\hat{\theta}_n - \theta(P))}{\hat{\sigma}_{\mbox{\tiny OB-I}}(m_n,b_n)} \quad \inD \quad  T_{\mbox{\tiny OB-I}}(\beta,b_{\infty}); \tag{\mbox{OB-I} Limit} \\
T_{\mbox{\tiny OB-II}}(m_n,b_n) &:= \frac{\sqrt{n}(\bar{\theta}_n - \theta(P))}{\hat{\sigma}_{\mbox{\tiny OB-II}}(m_n,b_n)} \quad \inD \quad  T_{\mbox{\tiny OB-II}}(\beta,b_{\infty}); \tag{\mbox{OB-II} Limit}\\T_{\mbox{\tiny OB-III}}(m_n,b_n) &:= \frac{\sqrt{n}(\hat{\theta}_n - \theta(P))}{\hat{\sigma}_{\mbox{\tiny OB-III}}(m_n,b_n)} \quad \inD \quad  T_{\mbox{\tiny OB-III}}(\beta,b_{\infty}), \tag{\mbox{OB-III} Limit}
\end{align} where $\beta$ and $b_{\infty}$ are the limiting batch size and number of batches as defined in~\eqref{keylimits}. The existence of the weak limits $T_{\mbox{\tiny OB-x}}$, x = I,II,III, however, needs to be established and their characterization will occupy much of the rest of the paper. Furthermore, on our way to characterizing $T_{\mbox{\tiny OB-x}}$, x = I,II,III, we will also establish the weak limits of the estimators $\hat{\sigma}^2_{\mbox{\tiny OB-x}}(m_n,b_n), {\rm x = I,II,III}$ of the variance constant $\sigma^2$. The random variables $T_{\mbox{\tiny OB-x}}, {\rm x = I,II,III}$ and  $\hat{\sigma}_{\mbox{\tiny OB-x}}, {\rm x = I,II,III}$ should be seen as distribution-free statistical functional analogues of the Student's $t$ and $\chi^2$ random variables, respectively.  

As summarized in Table~\ref{synopsis}, the nature of $T_{\mbox{\tiny OB-x}}, {\rm x = I,II,III}$ (and those of $\hat{\sigma}_{\mbox{\tiny OB-x}},{\rm x = I,II,III}$)  depend on the limiting batch size $\beta$ and the limiting number of batches $b_{\infty}$. In particular, depending on whether $\beta=0$ (small batch regime) or $\beta>0$ (large batch regime), the statistics behave quite differently. For example, the small batch regime ($\beta =0$) produces the normal limit ($Z$ statistics) along with consistent estimation of $\sigma^2$, whereas the large batch regime ($\beta>0$) produces limits that are functionals of the Wiener process along with no consistent estimation of $\sigma^2$. The asymptotic number of batches $b_{\infty}$ affects the nature of the limiting distributions in the large batch regime. See Table~\ref{synopsis} for a synopsis. 

\section{Key Assumptions}\label{sec:mathprelim} In this section, we state and comment on various regularity assumptions that will be invoked when proving the technical results. Not all of these assumptions are ``standing assumptions'' in that some of the results to follow (especially when $\beta=0$) will need only a subset of the assumptions. 

\begin{assumption}[Stationarity]\label{ass:stat}  The $S$-valued sequence $\{X_n, n \geq 1\}$ is stationary, that is, for any $n_j, j=1,2,\ldots,k < \infty$ and $k < \infty$, the distribution of $(X_{n_1+\tau}, X_{n_2+\tau}, X_{n_3 + \tau}, \ldots,X_{n_k+\tau})$ does not depend on $\tau \in \mathbb{Z}^+$. \end{assumption} 

\begin{assumption}[Strong Mixing Condition]\label{ass:mixing} Suppose $\mathcal{G},\mathcal{H}$ are sub-$\sigma$-algebras of $\mathcal{F}$ in the probability space $(\Omega, \mathcal{F},P)$. Recall that the strong mixing constant $\alpha(\mathcal{G},\mathcal{H})$ is given by \begin{align} \alpha(\mathcal{G},\mathcal{H}) &= \sup_{A \in \mathcal{G}} \sup_{B \in \mathcal{H}} \left | P(AB) - P(A)P(B) \right| \nonumber \\ &= \frac{1}{2} \sup_{A \in \mathcal{G}} \mathbb{E}\left[ \left|P(A \vert \mathcal{H}) - P(A)\right|\right] \nonumber \\ &= \frac{1}{2} \sup_{A \in \mathcal{H}} \mathbb{E}\left[ \left|P(A \vert \mathcal{G}) - P(A)\right|\right].\end{align} We assume that the $S$-valued sequence $\{X_n, n \geq 1\}$ has strong mixing~\cite[pp. 347]{2009ethkur} constants $\alpha_n := \alpha(\mathcal{F}_{k},\mathcal{F}_{k,n})$ satisfying $\alpha_n \searrow 0$ as $n \to \infty$, where $\mathcal{F}_{k} : =\sigma(X_1,X_2,\ldots,X_k)$, $\mathcal{F}_{k,n} : =\sigma(X_{k+n},X_{k+n+1},\ldots)$ denote  sub-$\sigma$-algebras of $\mathcal{F}$ ``separated by $n$.''    \end{assumption}

\begin{assumption}[Central Limit Theorem]\label{ass:clt}  The sequence $\{\hat{\theta}_n, n \geq 1\}$ of sectioning estimators satisfies a central limit theorem (CLT), that is, \begin{equation}\label{clt} \sqrt{n}(\hat{\theta}_n - \theta(P)) \inD \sigma Z(0,1),\end{equation} where $Z(0,1)$ is the standard normal random variable and \rp{$\sigma \in (0,\infty)$} is a finite, positive constant. \end{assumption}

\begin{assumption}[Asymptotic Moment Existence] \label{ass:unif} The sequence $\{\hat{\theta}_n, n \geq 1\}$ of sectioning estimators is such that, for some $\delta_0 >0$, \begin{equation}\label{sigmaexistence} \mathbb{E}\left[\left(\sqrt{n}(\hat{\theta}_n - \theta(P))\right)^{2+\delta_0}\right] \to \sigma^{2+\delta_0} \mbox{ as } n \to \infty,\end{equation} where $\sigma$ is the constant appearing in Assumption~\ref{ass:clt}. 
\end{assumption} 

\rp{\begin{assumption}[Strong Invariance]\label{ass:stronginvar} The sequence $\{\hat{\theta}_n, n \geq 1\}$ of sectioning estimators satisfies the following strong invariance principle. There exists a standard Wiener process $\{W(t), t \geq 0\}$ and a stationary stochastic process $\{\tilde{X}_n, n \geq 1\} \overset{\rm d}{=} \{X_n, n \geq 1\}$ defined on a common probability space such that as $n \to \infty$, \begin{equation}\label{inv} \left |\sigma^{-1}\left(\hat{\theta}_{\lfloor n \rfloor} - \theta(P)\right) - n^{-1}W(n)\right | \leq \Gamma \, n^{-1/2-\delta}\sqrt{\log n} \quad \emph{ a.s.},\end{equation} where the constant $\delta>0$ and the real-valued random variable $\Gamma$ satisfies $\mathbb{E}[\Gamma] < \infty$. \end{assumption}}

Assumption~\ref{ass:stat} on the stationarity of the sequence $\{X_n, n \geq 1\}$ is mild and standard in settings  where a confidence interval is sought. Assumption~\ref{ass:mixing} on strong mixing is a weak asymptotic independence condition imposed to rigorize the intuitive idea that the dependence between events formed from subsets of the sequence $\{X_n, n \geq 1\}$ in the far past and the far future decays to zero as their separation diverges. Assumption~\ref{ass:stat} and Assumption~\ref{ass:mixing} are used only in our results involving small batches, that is, when $m_n/n \to 0$.  

\rp{As discussed in the introductory part of the paper, Assumption~\ref{ass:clt} on the existence of a CLT on $\hat{\theta}_n$, is fundamental to the methods presented here. (Assumption~\ref{ass:stronginvar} implies Assumption~\ref{ass:clt}.) While there are exceptions, a CLT holds in numerous useful settings where a confidence interval is desired, e.g., mean estimation~\cite[pp. 73]{1980ser}, quantile estimation~\cite[pp. 77]{1980ser}, gradient estimation~\cite[Section 7]{2003gla}, M-estimation~\cite[Chapter 6]{2006gee}, CVaR estimation~\cite{2022dedmer}, acf and spectral density estimation~\cite[Section 8.4]{1971and}, and robust statistics~\cite{1975gasrub}, apart from other more standard estimation settings in statistics. Assumption~\ref{ass:unif} goes a little further than Assumption~\ref{ass:clt} to stipulate the existence of the $(2+\delta_0)$-th moment of $\hat{\theta}_n$ (for some $\delta_0 >0$) and its convergence to $\sigma^{2+\delta_0}$. It can be shown that Assumption~\ref{ass:unif} implies the uniform integrability of the sequence $\{\sqrt{n}(\hat{\theta}_n - \theta(P)), n \geq 1\}$.}

The inequality in~\eqref{inv} of Assumption~\ref{ass:stronginvar}, sometimes called ``strong invariance,'' essentially stipulates that the scaled process $\left\{\sqrt{n}\sigma^{-1}\left(\hat{\theta}_{\lfloor t \rfloor} - \theta(\tilde{P})\right), t \leq n\right\}$ can be approximated uniformly to within $n^{-\delta}$ \emph{almost surely}, by a suitable standard Wiener process on a rich enough probability space. As argued in Philipp and Stout~\cite{1975phisto}, and Glynn and Iglehart~\cite{1988glyigl}, Assumption~\ref{ass:stronginvar} holds for a variety of weakly dependent processes. See~\cite{1981csorev} for strong invariance theorems on partial sums, empirical processes, and quantile processes.  

As will become evident, Assumption~\ref{ass:stronginvar} is used only in proving results that involve large batches, that is, when $m_n/n \to \beta >0$. We believe all these results will still hold with a functional CLT on $\hat{\theta}_n$ instead of Assumption~\ref{ass:stronginvar}. (Loosely, strong approximation  $\Rightarrow$ functional CLT $\Rightarrow$ CLT --- see, for instance,~\cite{1998gly,1980ser}.) Despite this increased generality that a functional CLT affords, we have chosen to remain with Assumption~\ref{ass:stronginvar} since the resulting proofs are more intuitive.  

\rp{\begin{remark} It is likely that Assumption~\ref{ass:stronginvar} can be relaxed, e.g., by replacing the canonical scaling $\sqrt{n}$ appearing in in~\eqref{clt} with $n^{\alpha}(\hat{\theta}_n - \theta(P)) \inD \sigma Z(0,1)$ for some known $\alpha>0$, without changing most of the results reported in this paper. Such generalization is part of an ongoing investigation and entails identifying alterations needed on the technical conditions involving batch size and number of batches.     
\end{remark}}

\section{The OB-I Limit}\label{sec:ob1}
In this section, we characterize the weak limit of \begin{equation}\label{ob1prelimit} T_{\mbox{\tiny OB-I}}(m_n,b_n) := \frac{\sqrt{n}(\hat{\theta}_n - \theta(P))}{\hat{\sigma}_{\mbox{\tiny OB-I}}(m_n,b_n)},\end{equation} as described in Section~\ref{sec:confints}. Along the way, we also characterize the asymptotic behavior of the variance estimator $\hat{\sigma}^2_{\mbox{\tiny OB-I}}(m_n,b_n)$. The ensuing  Section~\ref{sec:ob1largebatch} treats the $\beta := \lim_{n \to \infty} m_n/n > 0$ (large batch) regime, and Section~\ref{sec:ob1smallbatch} treats the $\beta=0$ (small batch) regime.

\subsection{Large Batch Regime for OB-I} \label{sec:ob1largebatch}
Theorem~\ref{thm:ob1largebatch} that follows asserts that $\hat{\sigma}^2_{\mbox{\tiny OB-I}}(m_n,b_n)/\sigma^2$ and $T_{\mbox{\tiny OB-I}}(m_n,b_n)$ converge weakly to certain functionals of the Wiener process that we denote $\chi^2_{\mbox{\tiny OB-I}}(\beta,b_{\infty})$ and $T_{\mbox{\tiny OB-I}}(\beta,b_{\infty})$, respectively. It is important that Theorem~\ref{thm:ob1largebatch} needs the strong invariance Assumption~\ref{ass:stronginvar} to hold so that the dependence across batches can be characterized precisely.  

\rp{\begin{theorem}[OB-I Large Batch Regime]\label{thm:ob1largebatch} Suppose  Assumption~\ref{ass:stronginvar} holds, and that $\beta = \lim_{n \to \infty} m_n/n \in (0,1).$ Assume also that $b_n \to b_{\infty} \in \{2,3,\ldots,\infty\}$ as $n \to \infty$. Define \begin{numcases}{\label{chisq1}\chi^2_{\emph{\mbox{\tiny OB-I}}}(\beta,b_{\infty}) :=} \frac{1}{\kappa_1(\beta,b_{\infty})}\frac{1}{\beta(1-\beta)} \int_{0}^{1-\beta} \left( W(u+\beta) - W(u) - \beta W(1)\right)^2 \, du & $b_{\infty} = \infty$; \nonumber \\ \frac{1}{\kappa_1(\beta,b_{\infty})} \frac{1}{\beta b_{\infty}}\sum_{j=1}^{b_{\infty}} \left( W(c_j+\beta) - W(c_j) - \beta W(1)\right)^2 & $b_{\infty} \in \mathbb{N}\setminus \{1\}$, \nonumber \\ \end{numcases} where $\kappa_1(\beta,b_{\infty}) = 1- \beta$ and $c_j := (j-1)\frac{1-\beta}{b_{\infty}-1}$. Then, as $n \to \infty$, \begin{equation}\label{swag} \hat{\sigma}^2_{\emph{\mbox{\tiny OB-I}}}(m_n,b_n) \inD \sigma^2\chi^2_{\emph{\mbox{\tiny OB-I}}}(\beta,b_{\infty}); \quad \emph{ and } \quad T_{\emph{\mbox{\tiny OB-I}}}(m_n,b_n) \inD  \frac{W(1)}{\sqrt{\chi^2_{\emph{\mbox{\tiny OB-I}}}(\beta,b_{\infty})}}.\end{equation} \end{theorem}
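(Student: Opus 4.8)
The plan is to push everything through the strong invariance principle of Assumption~\ref{ass:stronginvar}, so that both the variance estimator and the Studentized root become continuous functionals of a \emph{single} standard Wiener process, after which the stated limits follow by the continuous mapping theorem. Throughout I work on the common probability space of Assumption~\ref{ass:stronginvar} (replacing $\{X_n\}$ by $\{\tilde X_n\}$), write $s_i := (i-1)d_n+1$ for the starting index of batch $i$, $t_i := s_i/n$, and $\beta_n := m_n/n \to \beta$. It is convenient to rescale time via the standard Wiener process $W_n(t) := n^{-1/2}W(nt)$, $t \in [0,1]$, so that multiplying~\eqref{inv} by $\sigma\sqrt{n}$ gives the sectioning estimator representation $\sqrt{n}(\hat{\theta}_n - \theta(P)) = \sigma W_n(1) + o(1)$ almost surely.

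First I would establish the batch-level analogue of~\eqref{inv}: for each batch $i$,
\[ \sqrt{m_n}\,\bigl(\hat{\theta}_{i,m_n} - \theta(P)\bigr) = \frac{\sigma}{\sqrt{\beta_n}}\,\bigl(W_n(t_i+\beta_n) - W_n(t_i)\bigr) + o(1), \]
with the \emph{same} Wiener process $W$ governing every batch simultaneously. This step is the main obstacle. Assumption~\ref{ass:stronginvar} is stated only for the prefix (sectioning) estimator, and since a general statistical functional is not additive in the data, the batch estimator cannot simply be written as a difference of two prefix estimators. The coupling must instead be obtained by applying the strong approximation to each batch through the asymptotically linear (influence-function) structure that~\eqref{inv} encodes, so that the batch-wise fluctuations are increments of one Wiener process rather than of $b_n$ independent ones. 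The delicate part is making the error uniform over the (possibly growing) collection of batches: after the $\sqrt{m_n}$ scaling the error for batch $i$ is of order $\Gamma_i\,m_n^{-\delta}\sqrt{\log m_n}$, so one must control $\max_{i \le b_n}\Gamma_i$ (or, more cheaply, the average of the squared errors) so that nothing survives in the limit.

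Granting this, I would subtract the sectioning estimator, using $\sqrt{m_n}(\hat{\theta}_n - \theta(P)) = \sqrt{\beta_n}\,\sqrt{n}(\hat{\theta}_n-\theta(P)) = \sqrt{\beta_n}\,\sigma W_n(1) + o(1)$, to obtain
\[ m_n\bigl(\hat{\theta}_{i,m_n} - \hat{\theta}_n\bigr)^2 = \frac{\sigma^2}{\beta_n}\Bigl[\bigl(W_n(t_i+\beta_n) - W_n(t_i)\bigr) - \beta_n W_n(1)\Bigr]^2 + o(1). \]
Since $\hat{\sigma}^2_{\mbox{\tiny OB-I}}(m_n,b_n) = \frac{1}{1-\beta}\frac{1}{b_n}\sum_{i=1}^{b_n} m_n(\hat{\theta}_{i,m_n}-\hat{\theta}_n)^2$, I would then recognize $\frac{1}{b_n}\sum_i(\cdot)$ as a Riemann average over the grid $\{t_i\}\subset[0,1-\beta]$ of spacing $d_n/n$. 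When $b_{\infty}=\infty$ this spacing vanishes and, by continuity of the Wiener path, the average converges to $\frac{1}{1-\beta}\int_0^{1-\beta}[(W(u+\beta)-W(u))-\beta W(1)]^2\,du$; combined with the prefactor $\frac{1}{(1-\beta)\beta_n}\to\frac{1}{(1-\beta)\beta}$ this reproduces exactly $\chi^2_{\mbox{\tiny OB-I}}(\beta,\infty)$ with $\kappa_1=1-\beta$. When $b_{\infty}\in\mathbb{N}\setminus\{1\}$ the grid retains a fixed limiting number of nodes at $c_j=(j-1)\frac{1-\beta}{b_{\infty}-1}$ (using $d_n/n\to\frac{1-\beta}{b_{\infty}-1}$), so the average converges to the finite sum defining $\chi^2_{\mbox{\tiny OB-I}}(\beta,b_{\infty})$. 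In either case the continuous mapping theorem yields $\hat{\sigma}^2_{\mbox{\tiny OB-I}}(m_n,b_n)\inD\sigma^2\chi^2_{\mbox{\tiny OB-I}}(\beta,b_{\infty})$.

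Finally, for the Studentized root I would use that the numerator $\sqrt{n}(\hat{\theta}_n-\theta(P)) = \sigma W_n(1)+o(1)$ and the denominator $\hat{\sigma}_{\mbox{\tiny OB-I}}(m_n,b_n)$ are functionals of the \emph{same} rescaled Wiener process, so that the pair converges jointly to $(\sigma W(1),\,\sigma\sqrt{\chi^2_{\mbox{\tiny OB-I}}(\beta,b_{\infty})})$. Since $\chi^2_{\mbox{\tiny OB-I}}(\beta,b_{\infty})>0$ almost surely (the limiting integral, resp.\ finite sum, of nondegenerate Gaussian squares is strictly positive with probability one), the map $(a,b)\mapsto a/\sqrt{b}$ is continuous at the limiting pair, and a second application of the continuous mapping theorem gives $T_{\mbox{\tiny OB-I}}(m_n,b_n)\inD \sigma W(1)/(\sigma\sqrt{\chi^2_{\mbox{\tiny OB-I}}(\beta,b_{\infty})}) = W(1)/\sqrt{\chi^2_{\mbox{\tiny OB-I}}(\beta,b_{\infty})}$, as claimed. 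The automatic cancellation of $\sigma$ here is precisely the cancellation mechanism anticipated in~\eqref{cancellation}.
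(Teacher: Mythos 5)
Your proposal follows essentially the same route as the paper's proof: couple each batch estimator to increments of a single Wiener process via Assumption~\ref{ass:stronginvar}, split $\hat{\sigma}^2_{\mbox{\tiny OB-I}}(m_n,b_n)$ into a Wiener functional plus an almost-surely vanishing error, identify the limit by a lattice/Riemann-sum argument under Brownian scaling in the two cases $b_{\infty}=\infty$ and $b_{\infty}<\infty$, and conclude for the Studentized root by joint convergence of numerator and denominator together with the continuous mapping theorem. The ``main obstacle'' you flag --- upgrading the prefix-estimator strong invariance to a bound holding uniformly over all (shifted) batch estimators --- is dealt with in the paper simply by asserting the analogous bound~\eqref{errbds1} uniformly in $j$ as a direct consequence of Assumption~\ref{ass:stronginvar}, so your treatment, which names the issue and grants it, is no less complete than the paper's own.
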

}

The following theorem characterizes the (asymptotic) moments of the OB-I variance estimator $\hat{\sigma}^2_{\emph{\mbox{\tiny OB-I}}}(m_n,b_n)$.

\begin{theorem}[OB-I Moments]\label{thm:OB1moments} Let the postulates of Theorem~\ref{thm:ob1largebatch} hold. If the random variable $\Gamma$ appearing in Assumption~\ref{ass:stronginvar} satisfies $\mathbb{E}[\Gamma^4] < \infty$, and $|m_n/n - \beta| = o(n^{-\delta})$, then \begin{align}\label{defn:epsnugget} \mathbb{E}[\hat{\sigma}^2_{\emph{\mbox{\tiny OB-I}}}(m_n,b_n)] &=  \sigma^2 + O(\epsilon_{1,n}); \quad \epsilon_{1,n} := \frac{n^{-\delta}}{\beta^{\delta} \kappa_1(\beta,b_{\infty}) } (\sqrt{2\log^2 \beta n \, \log^2 n} + \sqrt{2}\log^2\,n)\end{align} and $\delta >0 $ is the constant appearing in Assumption~\ref{ass:stronginvar}. 

Recalling that $b_{\infty} := \lim_n b_n \in \{2,3,\ldots,\infty\}$, suppose further that $$\eta := \lim_n \frac{b_n}{n}  \in [0,\infty),$$ implying that necessarily $$d:= \lim_n d_n = \lim_n \frac{n-m_n}{b_n-1} = \begin{cases}\frac{1-\beta}{\eta} & \eta >0;  \\
\infty & \eta =0. \end{cases} $$ Then, after redefining $\infty \times 0 =0$, we have that
\begin{align}\lim_{n}\mbox{\emph{Var}}(\hat{\sigma}^2_{\emph{\mbox{\tiny OB-I}}}(m_n,b_n)) =   
\frac{\sigma^4}{(1-\beta)^2}\bigg(2\left(1-2\beta + 3\beta^2\right)\tilde{\mu}_0 + 6\left(1-\beta\right)^2\mu_0 -8d(1-\beta)\mu_1 + 4\mu_2\bigg), \label{asympexp2} \end{align} where $\tilde{\mu}_0, \mu_0, \mu_1,\mu_2$ are given by \begin{align} \tilde{\mu}_0 &:=  \begin{cases} \frac{1}{2} \left(\frac{1-2\beta}{1-\beta} \right)^2 \mathbb{I}_{\{\beta \leq 1/2\}} & \mbox{ if } b_{\infty} = \infty, \\ \frac{1}{2}\left(1- \frac{1}{b_{\infty}}\lceil \frac{\beta}{1-\beta}(b_{\infty}-1) \rceil \right)\left(1- \frac{1}{b_{\infty}}\lceil \frac{\beta}{1-\beta}(b_{\infty}-1) \rceil + \frac{1}{b_{\infty}} \right)\mathbb{I}_{\{\beta \leq 1/2\}} & \mbox{ if } b_{\infty} < \infty; \end{cases} \end{align} and defining $\gamma := \frac{\beta}{1-\beta} \wedge 1$, \begin{align} \mu_0 &:= \begin{cases} \gamma \left(1- \frac{\gamma}{2}\right) & \mbox{ if } b_{\infty} = \infty, \\ \frac{1}{b_{\infty}}\lfloor \gamma(b_{\infty}-1) \rfloor \bigg(1 - \frac{1}{2}\frac{1}{b_{\infty}}\lfloor \gamma(b_{\infty}-1) \rfloor - \frac{1}{2}\bigg) & \mbox{ if } b_{\infty} < \infty; \end{cases}  \nonumber \\ \mu_1 &:= \frac{1}{6}\gamma^2 \frac{\eta}{\beta} \left(3- 2\gamma\right)\mathbb{I}_{\{b_{\infty} = \infty\}}, \nonumber \\ \mu_2 &:= \frac{1}{2}\gamma^3 \left(\frac{\eta}{\beta}\right)^2 \left(\frac{2}{3}- \frac{1}{2}\gamma\right)\mathbb{I}_{\{b_{\infty} = \infty\}}.\end{align}\end{theorem}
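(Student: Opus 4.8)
The plan is to lean on the strong-invariance Assumption~\ref{ass:stronginvar} to reduce every quantity to an explicit functional of a single Wiener process, and then to compute Gaussian moments exactly. Writing $s_i:=(i-1)d_n$ and $A_i:=W(s_i+m_n)-W(s_i)$, the strong approximation (exactly as already deployed in the proof of Theorem~\ref{thm:ob1largebatch}) lets me represent the studentized batch deviations as $\sqrt{m_n}\,\sigma^{-1}(\hat\theta_{i,m_n}-\hat\theta_n)=G_{i,n}+R_{i,n}$, where $G_{i,n}:=A_i/\sqrt{m_n}-(\sqrt{m_n}/n)\,W(n)$ is a centered Gaussian and $R_{i,n}$ is a remainder bounded almost surely, uniformly in $i$, by a multiple of $\Gamma\,m_n^{-\delta}\sqrt{\log m_n}$ coming from~\eqref{inv}. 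Consequently $\hat\sigma^2_{\mbox{\tiny OB-I}}(m_n,b_n)=\frac{\sigma^2}{\kappa_1(\beta)}\,\frac1{b_n}\sum_{i=1}^{b_n}(G_{i,n}+R_{i,n})^2$, and the entire theorem reduces to the (exactly computable) Gaussian moments of $\{G_{i,n}\}$ together with a demonstration that the $R_{i,n}$ contributions are asymptotically negligible.

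For the mean, a direct covariance calculation gives $\mathbb{E}[G_{i,n}^2]=1-m_n/n$ exactly, since $\mathrm{Var}(A_i)=m_n$, $\mathrm{Cov}(A_i,W(n))=m_n$, and $\mathrm{Var}(W(n))=n$. Hence $\frac{\sigma^2}{\kappa_1(\beta)}(1-m_n/n)=\sigma^2+\sigma^2\frac{\beta-m_n/n}{1-\beta}$, whose bias is $O(|m_n/n-\beta|)=o(n^{-\delta})$ under the sharpened hypothesis. The residual error $O(\epsilon_{1,n})$ comes from the terms $2\mathbb{E}[G_{i,n}R_{i,n}]$ and $\mathbb{E}[R_{i,n}^2]$, both handled by Cauchy--Schwarz and the bound on $R_{i,n}$ (needing only $\mathbb{E}[\Gamma^2]<\infty$); evaluating that bound at the batch scale $m_n\approx\beta n$ is precisely what produces the $\beta^{-\delta}$ prefactor and the poly-logarithmic factors appearing in $\epsilon_{1,n}$.

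For the variance, the key simplification is the Isserlis/Wick identity: for the jointly centered-Gaussian pair $(G_{i,n},G_{j,n})$ one has $\mathrm{Cov}(G_{i,n}^2,G_{j,n}^2)=2\,\mathrm{Cov}(G_{i,n},G_{j,n})^2$. A short computation gives $\rho_{ij}:=\mathrm{Cov}(G_{i,n},G_{j,n})=(1-|i-j|\,d_n/m_n)^+-m_n/n$, so that $\mathrm{Var}(\hat\sigma^2_{\mbox{\tiny OB-I}})=\frac{2\sigma^4}{\kappa_1(\beta)^2}\,\frac1{b_n^2}\sum_{i,j}\rho_{ij}^2$ up to remainder terms. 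Those remainder terms, which involve products such as $G_{i,n}R_{i,n}G_{j,n}R_{j,n}$ and $R_{i,n}^2R_{j,n}^2$ summed over all $O(b_n^2)$ index pairs, are exactly why the stronger hypothesis $\mathbb{E}[\Gamma^4]<\infty$ is imposed; I would bound them by H\"older's inequality so that each contributes $o(1)$ after the $b_n^{-2}$ normalization.

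The substantive and most delicate step is then to evaluate $\lim_n\frac1{b_n^2}\sum_{i,j}\rho_{ij}^2$. Writing $\rho_h=(1-h\,d_n/m_n)^+-m_n/n$ and using the pair count ($b_n$ on the diagonal, $2(b_n-h)$ at lag $h\ge1$), I would split the sum into (i) the diagonal, (ii) the overlap band $h\,d_n<m_n$, on which $\rho_h$ is affine in $h$, and (iii) the no-overlap band $h\,d_n\ge m_n$, on which $\rho_h\equiv-m_n/n$. Setting $t=h/b_n$ and using $b_nd_n/m_n\to(1-\beta)/\beta$ converts the off-diagonal sums into Riemann integrals of piecewise polynomials in $t$ over $[0,\gamma]$ and $[\gamma,1]$, with $\gamma=\frac{\beta}{1-\beta}\wedge1$; this is where $\tilde\mu_0,\mu_0,\mu_1,\mu_2$ and the cut-off indicator $\mathbb{I}_{\{\beta\le1/2\}}$ emerge. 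The three regimes must be treated separately: when $b_\infty<\infty$ the sums are genuinely finite and produce the floor/ceiling expressions; when $b_\infty=\infty$ the diagonal washes out; and the finite-offset sub-regime $\eta=\lim_nb_n/n>0$ (equivalently $d<\infty$) is where I expect the main obstacle, since there the Riemann-sum passage must be carried to the order at which the discretization corrections carrying $\eta$ (through $\mu_1$ and $\mu_2$) survive rather than being discarded as lower order. Getting the bookkeeping of these corrections right, uniformly across the kink at $t=\gamma$ and consistently with the $\infty\times0=0$ convention, is the crux of the argument.
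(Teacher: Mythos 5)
Your proposal is correct and takes essentially the same route as the paper's own proof: both use Assumption~\ref{ass:stronginvar} to replace each batch deviation by a Wiener-process increment plus an almost-surely bounded remainder, so that $\hat{\sigma}^2_{\mbox{\tiny OB-I}}(m_n,b_n)$ reduces (modulo the remainder) to a purely Gaussian quadratic form --- your $\frac{\sigma^2}{b_n}\sum_i G_{i,n}^2$ is exactly the paper's $I_n$ from~\eqref{firstsplit} --- whose moments are then computed exactly, with $\mathbb{E}[\Gamma^4]<\infty$ invoked to make the remainder contributions negligible at the level of second moments. Your mean calculation $\mathbb{E}[G_{i,n}^2]=1-m_n/n$ is literally the paper's~\eqref{Inasympexp}. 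The only real difference is explicitness: where the paper disposes of the variance with ``a similar but tedious calculation'' and records only a closed form in $\beta$ (with no dependence on $b_\infty$, $d$, or $\eta$, i.e., effectively the fully-overlapping case), you spell out how that calculation must be organized --- the Isserlis/Wick identity $\mathrm{Cov}(G_{i,n}^2,G_{j,n}^2)=2\rho_{ij}^2$ with $\rho_{ij}=(1-|i-j|d_n/m_n)^+-m_n/n$ (which is the correct covariance), followed by a lag decomposition and a Riemann-sum passage in $t=h/b_n$ --- and this is indeed the mechanism by which the regime-dependent constants $\tilde{\mu}_0,\mu_0,\mu_1,\mu_2$ and the cutoff $\mathbb{I}_{\{\beta\le 1/2\}}$ must arise. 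You stop short of executing that final bookkeeping across the three regimes ($b_\infty<\infty$; $b_\infty=\infty$ with $\eta=0$; $b_\infty=\infty$ with $\eta>0$), but the paper's written proof stops short of it too: the formula it asserts is not even expressed in terms of $\tilde{\mu}_0,\mu_0,\mu_1,\mu_2$, so your proposal is no less complete than the argument the paper actually gives, and your Wick organization is the more useful record of how~\eqref{asympexp2} would actually be verified.
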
 
We make some further observations before providing the proofs of Theorem~\ref{thm:ob1largebatch} and Theorem~\ref{thm:OB1moments}.
\begin{enumerate} \item[(a)] The estimator $\hat{\sigma}^2_{\mbox{\tiny OB-I}}(m_n,b_n)$ does not consistently estimate the variance parameter $\sigma^2$, but converges weakly to the product of $\sigma^2$ and the random variable $\chi^2_{\emph{\mbox{\tiny OB-I}}}(\beta,b_{\infty})$ appearing in~\eqref{swag}. As in all cancellation methods, the weak limit of $T_{\mbox{\tiny OB-I}}(m_n,b_n)$ does not involve $\sigma^2$ since it ``cancels out.'' We slightly abuse notation for ease of exposition and \rp{use~\eqref{swag}} to define the $T_{\mbox{\tiny OB-I}}$ random variable:  $$T_{\mbox{\tiny OB-I}}(\beta,b_{\infty}) := \frac{W(1)}{\sqrt{\chi^2_{\emph{\mbox{\tiny OB-I}}}(\beta,b_{\infty})}}, \quad (\beta,b_{\infty}) \in (0,1) \times \mathbb{N} \setminus \{1\}.$$ \item[(b)] The factor $\kappa_1(\beta,b_{\infty})=1-\beta$ is a ``bias correction'' factor introduced to ensure that $\hat{\sigma}^2_{\mbox{\tiny OB-I}}(m_n,b_n)$ is asymptotically unbiased. \item[(c)] The expression for $\chi^2_{\mbox{\tiny OB-I}}(\beta,b_{\infty})$ in Theorem~\ref{thm:ob1largebatch} seems to have appeared first in~\cite[pp. 326]{2009aktalegolwil} for the steady-state mean context and assuming fully overlapping batches, that is, for $d_n=1$ and $b_{\infty}=\infty$. (The reader should be aware that while $b_{\infty}$ in the current paper refers to the limiting number of batches, $b_{\infty}$ in~\cite{2009aktalegolwil} refers to the ratio $n/m_n \to \beta^{-1}$. Furthermore, a simple re-scaling of the Wiener process is needed to see that the expression appearing in Theorem~\ref{thm:ob1largebatch} and that in ~\cite[pp. 326]{2009aktalegolwil} are equivalent.)  Similarly, the special case of fully overlapping batches and $b_{\infty}=\infty$ for $\mbox{Var}(\hat{\sigma}^2_{\mbox{\tiny OB-I}}(m_n,b_n))$ in Theorem~\ref{thm:OB1moments} appears in~\cite[pp. 290]{1995dam} for the context of the steady-state mean. \rp{\item[(d)] We can show through calculus on~\eqref{asympexp2} that $\inf_{\beta \in (0,1)} \left\{\lim_{n \to \infty} \mbox{Var}(\hat{\sigma}^2_{\mbox{\tiny OB-I}}(m_n,b_n))\right\} = 0$ is approached as $\beta \to 0$. (The infimum is not attained although there is a local minimum around $\beta = 0.467$.) This suggests using small batches but this is counter to what is seen in practice. Our numerical experience here and elsewhere suggests rather strongly that the asymptotic batch size $\beta$ has a ``first-order effect'' on coverage probability (with large $\beta$ being better), and a ``second-order effect'' on expected half-width (with large $\beta$ being bad), whereas the asymptotic number of batches $b_{\infty}$ has a ``first-order effect on expected half-width'' (with large $b_{\infty}$ good) but a ``second-order effect'' on coverage probability. These arguments suggest that using~\eqref{asympexp2} as the sole means of deciding the quality of confidence intervals is misleading.}

\item[(e)] The offset parameter $d_n$ comes into play through its effect on the limiting number of batches $b_{\infty}$. Specifically, notice that since $b_n = 1 + (n-m_n)/d_n$ and $m_n/n \to \beta,$ the asymptotic number of batches $b_{\infty}=\infty$ if $d_n/n = o(1)$, and $b_{\infty}< \infty$ if $\lim_n d_n/n >0$ (assuming it exists). 

\item[(f)] The table in Figure~\ref{fig:obt1} displays the critical values $t_{\mbox{\tiny OB-I},{\tiny 1-\alpha}}(\beta,b_{\infty}):= \min_x P(T_{\mbox{\tiny OB-I}}(\beta,b_{\infty}) \leq x) \geq 1-\alpha$ associated with the $T_{\mbox{\tiny OB-I}}$ distribution as a function of $1-\alpha$ and for different values of the parameters $\beta, b_{\infty}$. R and MATLAB code for calculating the critical values can be obtained through \texttt{https://web.ics.purdue.edu/$\sim$pasupath}.
\end{enumerate}

\begin{figure}[ht]
\centering                
{\includegraphics[clip, trim=0cm 2.5cm 0cm 0cm, width=1.00\textwidth]{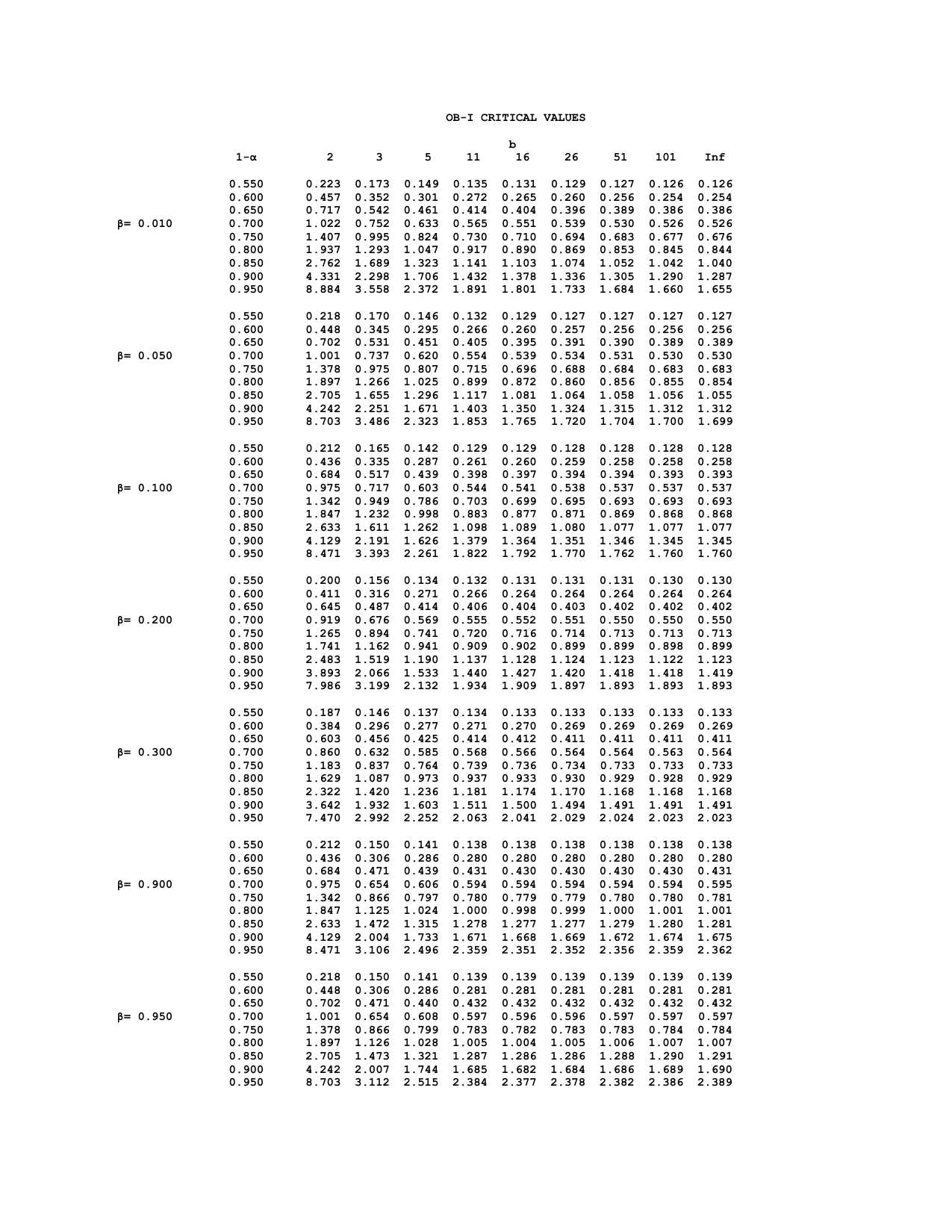}}
\caption{$T_{\mbox{\tiny OB-I}}$ critical values. The table displays critical values $t_{\mbox{\tiny OB-I},{\tiny 1-\alpha}}(\beta,b_{\infty}):= \inf \{r: P(T_{\mbox{\tiny OB-I}}(\beta,b_{\infty}) \leq r) = 1-\alpha \}$ associated with the $T_{\mbox{\tiny OB-I}}$ distribution as a function of $1-\alpha$, the asymptotic batch size $\beta$, and the asymptotic number of batches $b_{\infty}$.}\label{fig:obt1} 
\end{figure}

\subsection{Proofs of Theorem~\ref{thm:ob1largebatch} and Theorem~\ref{thm:OB1moments}} 
\begin{proof}[Proof of Theorem~\ref{thm:ob1largebatch}] 
Since Assumption~\ref{ass:stronginvar} holds, we will establish the first assertion in~\eqref{swag} by comparing individual terms that comprise $\hat{\sigma}^2_{\mbox{\tiny OB-I}}(m_n,b_n)$ against corresponding terms constructed from the Wiener process. Specifically, let's define \begin{equation}\label{browniandiff}\tilde{B}_{j,m_n}: = m_n^{-1}\left(W((j-1)\frac{n-m_n}{b_n-1} +m_n) - W((j-1)\frac{n-m_n}{b_n-1} )\right), \quad j = 1,2,\ldots,b_n\end{equation} and observe that \begin{align}\label{firstsplit} (1-\beta)\hat{\sigma}^2_{\mbox{\tiny OB-I}}(m_n,b_n) &= \underbrace{\frac{m_n}{b_n}\sum_{j=1}^{b_n} \left[\left(\hat{\theta}_{j,m_n} - \hat{\theta}_n\right)^2 - \sigma^2\left(\tilde{B}_{j,m_n} - n^{-1}W(n)\right)^2\right]}_{E_n(m_n,b_n)} \\ & \hspace{2in} + \underbrace{\sigma^2\frac{1}{b_n}\sum_{j=1}^{b_n}\left(\sqrt{m_n}\tilde{B}_{j,m_n} - \frac{\sqrt{m_n}}{n}W(n))\right)^2}_{I_n} \nonumber \\ &=: E_n(m_n,b_n) + I_n.\end{align} Noticing that \begin{equation}\label{error} \hat{\theta}_{j,m_n} - \hat{\theta}_n = \underbrace{\left(\hat{\theta}_{j,m_n} - \sigma\tilde{B}_{j,m_n}\right)}_{U_{j,m_n}} + \sigma\underbrace{\left(\tilde{B}_{j,m_n} - n^{-1}W(n)\right)}_{H_{j,m_n}} + \underbrace{\left(n^{-1}\sigma W(n) - \hat{\theta}_n\right)}_{C_{n}},\end{equation} we can then write \begin{align}\label{split2} E_n(m_n,b_n) = m_n\left(\frac{1}{b_n}\sum_{j=1}^{b_n} U_{j,m_n}^2 + 2 \frac{\sigma}{b_n}\sum_{j=1}^{b_n}  U_{j,m_n}H_{j,m_n} + 2 \frac{C_n}{b_n}\sum_{j=1}^{b_n} U_{j,m_n} +  2 \frac{\sigma C_n}{b_n}\sum_{j=1}^{b_n} H_{j,m_n} + C_n^2\right).\end{align} Now, we see that except for a set of measure zero in the probability space implied by Assumption~\ref{ass:stronginvar}, there exists $\Gamma(\omega)$ such that, uniformly in $j$,  \begin{equation}\label{errbds1} |U_{j,m_n}| \leq \Gamma(\omega) m_n^{-1/2-\delta}\left(\log^2 m_n\right)^{1/2}; \quad |C_n|  \leq \Gamma(\omega) n^{-1/2-\delta}\left(\log^2 n\right)^{1/2}, \end{equation} Furthermore, due to Theorem~\ref{thm:brownianinv}, for any given $\epsilon>0$, except for a set of measure zero in the probability space implied by Assumption~\ref{ass:stronginvar}, there exists $n_0(\omega,\epsilon)$ such that for all $n \geq n_0(\omega,\epsilon)$, and uniformly in $j$, \begin{align}\label{browniannuggetbds} |H_{j,m_n}| & \leq (1+\epsilon)\left(m_n^{-1/2} \left(2(\log^2 n - \log \frac{m_n}{n})\right)^{1/2} + n^{-1/2}\left(2\log^2 n\right)^{1/2}\right) \nonumber \\ & \leq (1+\epsilon)  m_n^{-1/2} \left( \left(2(\log^2 n - \log \frac{m_n}{n})\right)^{1/2} + \left(2\log^2 n\right)^{1/2} \right) 
\end{align} after ignoring non-integralities. 

Plugging~\eqref{errbds1} and~\eqref{browniannuggetbds} in~\eqref{split2}, we get
\begin{align}\label{errfinbd} E_n(m_n,b_n) &\leq \Gamma^2(\omega) m_n^{-2\delta}\log^2 m_n
\nonumber \\ & + 2 \sigma (1+\epsilon) \Gamma(\omega) m_n^{-\delta}\left(\left(\log^2 m_n\right)^{1/2} + \left(\log^2 n\right)^{1/2}  \right) \left((2(\log^2 n - \log \frac{m_n}{n}))^{1/2} + (2 \log^2 n)^{1/2} \right) \nonumber \\ &  + 2\sigma(\frac{m_n}{n})^{\frac{1}{2}} \Gamma^2(\omega)m_n^{-2\delta}\left(\log^2 m_n \log^2 n\right)^{1/2} + \Gamma^2(\omega) (\frac{m_n}{n}) m_n^{-2\delta}\log^2 n.\end{align} Notice that the second term appearing on the right-hand side of~\eqref{errfinbd} is dominant and goes to zero almost surely. 

Now lets calculate the weak limit of $I_n :=  \sigma^2\frac{1}{b_n}\sum_{j=1}^{b_n}\left(\sqrt{m_n}\tilde{B}_{j,m_n} - \frac{\sqrt{m_n}}{n} W(n)\right)^2$ appearing in~\eqref{firstsplit}. To facilitate calculation, define the lattice $\{0, \delta_n, 2 \delta_n, \ldots, \lfloor \frac{1}{\delta_n} \rfloor \delta_n \}$ having resolution $\delta_n : = \frac{1-(m_n/n)}{b_n-1},$ and a corresponding projection operation $\lfloor u \rfloor_{\scriptsize{\delta_n}} := \max\{k \delta_n: u \geq k \delta_n, k \in \mathbb{Z}\}, \quad u \in [0, 1-\delta_n].$ 

Now, recalling that $b_n = 1 + d_n^{-1}(n - m_n)$, we  can rewrite \begin{align}\label{Inlimitbinf} I_n &= \sigma^2\frac{1}{b_n}\sum_{j=1}^{b_n}\left(\sqrt{m_n}\tilde{B}_{j,m_n} - \frac{\sqrt{m_n}}{n} W(n)\right)^2 \nonumber \\ &= \sigma^2\frac{1}{b_n} \int_0^{1- \frac{m_n}{n} + \delta_n} \left( \frac{1}{\sqrt{m_n}} \left( W(n \lfloor  u \rfloor_{\delta_n} + m_n) - W(n \lfloor  u \rfloor_{\delta_n} ) \right) - \frac{\sqrt{m_n}}{n} W(n) \right)^2 \, \frac{1}{\delta_n} \, du \nonumber \\ &=  \sigma^2\frac{b_n-1}{b_n} \frac{n}{n-m_n} \int_0^{1- \frac{m_n}{n} + \delta_n} \left( \frac{1}{\sqrt{m_n}} \left( W(n \lfloor  u \rfloor_{\delta_n} + m_n) - W(n \lfloor  u \rfloor_{\delta_n}) \right) - \frac{\sqrt{m_n}}{n} W(n) \right)^2 \, du \nonumber \\ & \overset{d}{=} \sigma^2\frac{b_n-1}{b_n} \frac{n}{n-m_n} \frac{n}{m_n} \int_0^{1- \frac{m_n}{n} + \delta_n}  \left( W( \lfloor  u \rfloor_{\delta_n} + \frac{m_n}{n}) - W( \lfloor  u \rfloor_{\delta_n} ) - \frac{m_n}{n} W(1) \right)^2 \, du \nonumber  \\ & \to \sigma^2 \frac{\beta^{-1}}{1- \beta} \int_0^{1-\beta} \left( W(u + \beta) - W(u) - \beta W(1) \right)^2 \, du\end{align} if $\delta_n \to 0$ as $n \to \infty$ which happens when $b_n \to b_{\infty} = \infty$. This proves the $b_{\infty}= \infty$ case appearing in~\eqref{swag}.  

To prove the $b_{\infty} \in \{2,3,\ldots\}$ case, we observe that \begin{align} \label{Inlimitbfin} I_n &:= \sigma^2 \frac{1}{b_n} \sum_{j=1}^{b_n} \left(\sqrt{m_n} \tilde{B}_{j,m_n} - \frac{\sqrt{m_n}}{n}W(n)\right)^2 \nonumber \\ & \overset{d}{=}
\sigma^2 \frac{n}{m_n} \frac{1}{b_n} \sum_{j=1}^{b_n} \left(W((j-1)\frac{1-(m_n/n)}{b_n-1} + \frac{m_n}{n}) -  W((j-1)\frac{1-(m_n/n)}{b_n-1}) - \frac{m_n}{n}W(1)\right)^2 \nonumber \\ & \to \sigma^2 \frac{1}{\beta} \frac{1}{b_{\infty}} \sum_{j=1}^{b_{\infty}} \left(W((j-1)\frac{1-\beta}{b_{\infty}-1} + \beta) -  W((j-1)\frac{1-\beta}{b_{\infty}-1}) - \beta W(1)\right)^2, \end{align} as $n \to \infty,$ proving the $b_{\infty} < \infty$ case appearing in~\eqref{swag}. 

Let's now prove the second statement in~\eqref{swag} holds. From Assumption~\ref{ass:stronginvar} we have \begin{equation}\label{stronginvarapp} \left| \sqrt{n}\frac{\left(\hat{\theta}_n - \theta(P)\right)}{\sigma} - \frac{1}{\sqrt{n}}W(n)\right| \leq \Gamma \, \frac{1}{n^{\delta}}\sqrt{\log n} \quad \mbox{ a.s.},\end{equation} where $\Gamma$ is a well-defined random variable with finite mean, and $\delta>0.$ Hence \begin{equation} \label{Tnumer} \sqrt{n}(\hat{\theta}_n - \theta(P)) = \frac{1}{\sqrt{n}}W(n) + \tilde{E}_n; \quad \tilde{E}_n = o(\frac{1}{n^{\delta/2}}) \mbox{ a.s.}\end{equation}  We can then write \begin{align}\label{Tcmtready} T_{\mbox{\tiny OB-I}}(m_n,b_n) := \frac{\sqrt{n}(\hat{\theta}_n - \theta(P))}{\hat{\sigma}_{\mbox{\tiny OB-I}}(m_n,b_n)} = \frac{\frac{1}{\sqrt{n}}W(n) + \tilde{E}_n}{\sqrt{\frac{1}{1-\beta}(I_n + E_n(m_n,b_n))}},\end{align} where $I_n$ and $E_n(m_n,b_n)$ were introduced in~\eqref{firstsplit}, and both $E_n(m_n,b_n)$ and $\tilde{E}_n$ go to zero almost surely. \rp{Now apply to~\eqref{Tcmtready} the same steps leading to weak limits in~\eqref{Inlimitbinf} and~\eqref{Inlimitbfin} --- first replace by an object that is equal in distribution and then take limit as $n \to \infty$ --- to conclude that the second assertion in~\eqref{swag} holds.}
\end{proof}

\begin{proof}[Proof of Theorem~\ref{thm:OB1moments}]
Let's next prove the asymptotic expansion appearing in ~\eqref{asympexp2}. Simple algebra yields, for all $j = 1,2,\ldots,b_n,$ \begin{equation} \mathbb{E}[(\tilde{B}_{j,m_n} - n^{-1}W(n))^2] = \frac{1}{m_n} - \frac{1}{n},\end{equation} implying that \begin{equation} \label{Inasympexp} \mathbb{E}[I_n] = (1 - \frac{m_n}{n})\,\sigma^2. \end{equation}  Plugging~\eqref{Inasympexp} and the inequality~\eqref{errfinbd} in~\eqref{firstsplit} \rp{(after noticing that we have assumed $\Gamma$ appearing in Assumption~\ref{ass:stronginvar} satisfies $\mathbb{E}[\Gamma^4] < \infty$)}, we conclude that as $n \to \infty$, \begin{equation} \label{Vnasympexp} \mathbb{E}[\hat{\sigma}^2_{\mbox{\tiny OB-I}}(m_n,b_n)] = \sigma^2 + O(\epsilon_{1,n}),\end{equation} where $\epsilon_{1,n}$ is defined in~\eqref{defn:epsnugget} and we recall that $\delta$ is the constant appearing in Assumption~\ref{ass:stronginvar}. This proves the assertion in~\eqref{defn:epsnugget}.

Using a similar but tedious calculation, we find that \begin{equation} \label{In2asympexp} \mbox{Var}(I_n) = \beta^4 \,\left(\frac{4\beta^{-3} - 11\beta^{-2} + 4 \beta^{-1} + 6}{3(1- \beta)^4}\right) \, \sigma^4.\end{equation} Again plugging~\eqref{In2asympexp} and the inequality~\eqref{errfinbd} in~\eqref{firstsplit} \rp{(after noticing that we have assumed $\Gamma$ appearing in Assumption~\ref{ass:stronginvar} satisfies $\mathbb{E}[\Gamma^4] < \infty$)}, we conclude that as $n \to \infty$, \begin{equation} \label{Vnasympexp} \mbox{{Var}}(\hat{\sigma}^2_{\mbox{\tiny OB-I}}(m_n,b_n)) = \beta^4 \,\left(\frac{4\beta^{-3} - 11\beta^{-2} + 4 \beta^{-1} + 6}{3(1- \beta)^4}\right) \, \sigma^4 + O(\epsilon_{1,n}^2),\end{equation} thus proving the assertion in~\eqref{asympexp2}.
\end{proof}

\subsection{Small Batch Regime for OB-I}\label{sec:ob1smallbatch}
Theorem~\ref{thm:ob1largebatch} characterizes the effect of using large batch sizes, that is, $\lim_{n \to \infty} m_n/n = \beta > 0$ on the asymptotic behavior of $T_{\mbox{\tiny OB-I}}(m_n,b_n)$ and $\hat{\sigma}^2_{\mbox{\tiny OB-I}}(m_n,b_n)$. Theorem~\ref{thm:ob1smbatch} does the same but for the small batch ($\beta=0$) context. In particular, Theorem~\ref{thm:ob1smbatch} asserts that when small batches are used, $\hat{\sigma}^2_{\mbox{\tiny OB-I}}(m_n,b_n)$ consistently estimates $\sigma^2$, and that $T_{\mbox{\tiny OB-I}}(m_n,b_n)$ converges to the standard normal distribution.

\begin{theorem}[OB-I Small Batch Regime]\label{thm:ob1smbatch}  Suppose Assumptions~\ref{ass:stat}--\ref{ass:unif} hold, and that $\beta = \lim_{n \to \infty} m_n/n =0.$ Assume that the asymptotic number of batches $b_{\infty} := \lim_n b_n = \infty$. Then, as $n \to \infty$, \begin{equation}\label{statements} \hat{\sigma}^2_{\emph{\mbox{\tiny OB-I}}}(m_n,b_n) \inP \sigma^2; \quad \emph{ and } \quad  T_{\emph{\mbox{\tiny OB-I}}}(m_n,b_n) \inD Z(0,1). \end{equation} \end{theorem}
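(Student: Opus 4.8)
The plan is to first reduce the second assertion to the first, and then devote all effort to the consistency $\hat{\sigma}^2_{\mbox{\tiny OB-I}}(m_n,b_n)\inP\sigma^2$, which is the substance of the theorem. For the reduction, once consistency is in hand the continuous mapping theorem gives $\hat{\sigma}_{\mbox{\tiny OB-I}}(m_n,b_n)\inP\sigma\in(0,\infty)$; combining this with the CLT of Assumption~\ref{ass:clt}, namely $\sqrt{n}(\hat{\theta}_n-\theta(P))\inD\sigma Z(0,1)$, Slutsky's theorem immediately yields $T_{\mbox{\tiny OB-I}}(m_n,b_n)=\sqrt{n}(\hat{\theta}_n-\theta(P))/\hat{\sigma}_{\mbox{\tiny OB-I}}(m_n,b_n)\inD Z(0,1)$. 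Hence the whole task is the first statement in~\eqref{statements}, where we note $\kappa_1(0)=1$.

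\emph{Centering reduction.} Writing $Y_{i,m_n}:=\sqrt{m_n}(\hat{\theta}_{i,m_n}-\theta(P))$ and expanding $(\hat{\theta}_{i,m_n}-\hat{\theta}_n)^2$ about $\theta(P)$, one obtains $\hat{\sigma}^2_{\mbox{\tiny OB-I}}(m_n,b_n)=b_n^{-1}\sum_{i=1}^{b_n}Y_{i,m_n}^2-2\sqrt{m_n}(\hat{\theta}_n-\theta(P))\,b_n^{-1}\sum_i Y_{i,m_n}+m_n(\hat{\theta}_n-\theta(P))^2$. Since $m_n(\hat{\theta}_n-\theta(P))^2=(m_n/n)\,[\sqrt{n}(\hat{\theta}_n-\theta(P))]^2=(m_n/n)\,O_P(1)\to 0$ because $\beta=0$, and since the cross term is bounded by Cauchy--Schwarz by $2\sqrt{m_n}|\hat{\theta}_n-\theta(P)|\,(b_n^{-1}\sum_i Y_{i,m_n}^2)^{1/2}=(m_n/n)^{1/2}O_P(1)$, both correction terms are $o_P(1)$. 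It therefore suffices to prove the law of large numbers $b_n^{-1}\sum_{i=1}^{b_n}Y_{i,m_n}^2\inP\sigma^2$ for the triangular array $\{Y_{i,m_n}^2\}$.

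\emph{LLN for the array.} The mean is handled by stationarity and moments: Assumption~\ref{ass:stat} gives $Y_{i,m_n}\overset{d}{=}Y_{1,m_n}$, while Assumption~\ref{ass:clt} together with the uniform integrability of $\{(\sqrt{m_n}(\hat{\theta}_{m_n}-\theta(P)))^2\}$ implied by Assumption~\ref{ass:unif} gives $\mathbb{E}[Y_{1,m_n}^2]\to\sigma^2$, so $\mathbb{E}[b_n^{-1}\sum_i Y_{i,m_n}^2]\to\sigma^2$. For the concentration I would truncate, setting $Z_i:=Y_{i,m_n}^2\wedge M$. The tail $b_n^{-1}\sum_i(Y_{i,m_n}^2-M)^+$ has $L^1$-norm at most $\mathbb{E}[(Y_{1,m_n}^2-M)^+]$, which is uniformly small in $n$ for large $M$ by the same uniform integrability. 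For the bounded part I would bound $\mbox{Var}(b_n^{-1}\sum_i Z_i)=b_n^{-2}\sum_{i,j}\mbox{Cov}(Z_i,Z_j)$ by exploiting that $Z_i$ is measurable with respect to $\sigma(X_j:\,(i-1)d_n+1\le j\le(i-1)d_n+m_n)$: for non-overlapping batches the strong-mixing covariance inequality gives $|\mbox{Cov}(Z_i,Z_j)|\le 4M^2\alpha_{(|i-j|d_n-m_n)^+}$ (Assumption~\ref{ass:mixing}), while for the $O(m_n/d_n)$ overlapping batches per index I use the crude bound $|\mbox{Cov}(Z_i,Z_j)|\le M^2$.

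The overlapping pairs then contribute $O(m_n/(b_nd_n))=O(m_n/n)\to 0$ (using $b_nd_n\sim n-m_n$ from~\eqref{offsetbatchrel}), and the non-overlapping pairs contribute $O\big(b_n^{-1}\sum_{k\ge 1}\alpha_{kd_n}\big)$; since $d_n\ge 1$ forces $\alpha_{kd_n}\le\alpha_k$ and $\alpha_k\searrow 0$, Ces\`aro's lemma gives $b_n^{-1}\sum_{k\le b_n}\alpha_k\to 0$, so $\mbox{Var}(b_n^{-1}\sum_i Z_i)\to 0$ for each fixed $M$. A standard $\epsilon$--$M$ argument (choose $M$ large to control both the truncation bias $\mathbb{E}[Y_{1,m_n}^2\wedge M]-\sigma^2$ and the tail, then send $n\to\infty$) then delivers $b_n^{-1}\sum_i Y_{i,m_n}^2\inP\sigma^2$. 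The main obstacle is precisely this concentration step: because Assumption~\ref{ass:unif} supplies only a $(2+\delta_0)$-th moment rather than a fourth moment, a direct second-moment bound on the untruncated sum is unavailable and truncation is forced; and because Assumption~\ref{ass:mixing} posits only $\alpha_n\searrow 0$ rather than summable mixing, the covariance sum must be tamed by a Ces\`aro argument, with care taken over the overlap bookkeeping in both the bounded-offset ($d_n=O(1)$) and diverging-offset ($d_n\to\infty$) cases.
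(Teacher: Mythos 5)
Your proposal follows essentially the same route as the paper's proof: reduce the Studentized limit to the variance consistency via Slutsky, recenter the batch estimators at $\theta(P)$ and kill the cross and square correction terms using $m_n/n\to 0$, then prove the law of large numbers for the array $m_n(\hat{\theta}_{i,m_n}-\theta(P))^2$ by truncation, uniform integrability from Assumption~\ref{ass:unif}, the strong-mixing covariance bound, and a Ces\`aro argument on the mixing coefficients. Your explicit bookkeeping of the overlapping pairs (crude bound on the $O(m_n/d_n)$ overlapping lags, mixing bound on the rest) is a slightly more careful rendering of the same covariance estimate the paper uses, so the argument is correct and matches the paper's approach.
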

\begin{proof}
Since $\beta=0,$ $\kappa_1(\beta) = 1-\beta = 0$ and \begin{equation} \label{varparestagain} \hat{\sigma}^2_{\mbox{\tiny OB-I}}(m_n,b_n) := \frac{m_n}{b_n} \sum_{i=1}^{b_n} (\hat{\theta}_{i,m_n} - \hat{\theta}_n)^2.\end{equation} Also, define \begin{equation} \label{varparesthypo} \tilde{\sigma}^2_{\mbox{\tiny OB-I}}(m_n,b_n) := \frac{1}{b_n} \sum_{i=1}^{b_n} \underbrace{m_n(\hat{\theta}_{i,m_n} - \theta(P))^2}_{R_{i,m_n}}; \quad \tilde{\sigma}^2_{\mbox{\tiny OB-I}}(m_n,d_n;r) := \frac{1}{b_n} \sum_{i=1}^{b_n} \underbrace{R_{i,m_n}\mathbb{I}_{[0,r]}(R_{i,m_n})}_{R_{i,m_n}(r)},\end{equation}

We will first demonstrate that \begin{equation}\label{firststep} \tilde{\sigma}^2_{\mbox{\tiny OB-I}}(m_n,b_n) \inP \sigma^2. \end{equation} Notice that \begin{align}\label{expslit}  \mathbb{E}\left[\left |\tilde{\sigma}^2_{\mbox{\tiny OB-I}}(m_n,b_n) - \sigma^2 \right | \right] & \leq \overbrace{\mathbb{E}\left[\left |\tilde{\sigma}^2_{\mbox{\tiny OB-I}}(m_n,b_n) - \tilde{\sigma}^2_{\mbox{\tiny OB-I}}(m_n,d_n;r) \right |\right]}^{\scriptsize\mbox{I}}  \nonumber \\ & \hspace{1.5in} + \underbrace{\mathbb{E}\left[\left|\tilde{\sigma}^2_{\mbox{\tiny OB-I}}(m_n,d_n;r) - \sigma^2(r) \right | \right]}_{\scriptsize\mbox{II}} + \underbrace{\mathbb{E}\left[\left|\sigma^2(r) - \sigma^2 \right | \right]}_{\scriptsize\mbox{III}},\end{align} where $$\sigma^2(r) := \mathbb{E}\left[\sigma^2Z^2\mathbb{I}_{[0,r]}(\sigma^2 Z^2)\right]; \quad Z \overset{d}{=} Z(0,1).$$ Let's consider the first and last terms on the right-hand side of~\eqref{expslit}. Since $ \tilde{\sigma}^2_{\mbox{\tiny OB-I}}(m_n,b_n) - \tilde{\sigma}^2_{\mbox{\tiny OB-I}}(m_n,d_n;r) = \frac{1}{b_n} \sum_{i=1}R_{i,m_n}\mathbb{I}_{(r,\infty)}(R_{i,m_n}) \geq 0,$ and $R_{i,m_n}, i=1,2,\ldots,b_n$ are identically distributed, we have \begin{equation}\label{firsttermbd} \mathbb{E}\left[\left|\tilde{\sigma}^2_{\emph{\mbox{\tiny OB-I}}}(m_n,b_n) - \tilde{\sigma}^2_{\emph{\mbox{\tiny OB-I}}}(m_n,d_n;r)\right|\right] = \mathbb{E}\left[R_{i,m_n}\mathbb{I}_{(r,\infty)}(R_{i,m_n})\right]. \end{equation} Furthermore, due to Assumption~\ref{ass:unif}, we know that $R_{i,m_n}$ is uniformly integrable (for each $i$), and hence for any given $\epsilon>0$, there exists $r_0=r_0(\epsilon)$ (not dependent on $i$) such that for $r \geq r_0$, \begin{equation}\label{firsttermunifbd} \mathbb{E}\left[R_{i,m_n}\mathbb{I}_{(r,\infty)}(R_{i,m_n})\right] \leq \epsilon; \quad \mathbb{E}\left[ \sigma^2 Z^2 \mathbb{I}_{(r,\infty)}(\sigma^2Z^2)\right] \leq \epsilon.\end{equation} From~\eqref{firsttermunifbd}, we see that the terms I and III in~\eqref{expslit} satisfy, for $r \geq r_0$, \begin{equation}\label{firstandthird}   \mathbb{E}\left[\left|\tilde{\sigma}^2_{\emph{\mbox{\tiny OB-I}}}(m_n,b_n) - \tilde{\sigma}^2_{\emph{\mbox{\tiny OB-I}}}(m_n,d_n;r)\right|\right] \leq \epsilon; \quad \mathbb{E}\left[\left|\sigma^2(r) - \sigma^2 \right | \right] \leq \epsilon.\end{equation} Let's now analyze the term II in~\eqref{expslit}. Write 
\begin{equation}\label{term2}
\mathbb{E}\left[\left|\tilde{\sigma}^2_{\mbox{\tiny OB-I}}(m_n,d_n;r) - \sigma^2(r) \right | \right] \leq \underbrace{\sqrt{\mbox{Var}(\tilde{\sigma}^2_{\emph{\mbox{\tiny OB-I}}}(m_n,d_n;r))}}_{I_3} + \underbrace{\left|\mathbb{E}\left[\tilde{\sigma}^2_{\emph{\mbox{\tiny OB-I}}}(m_n,d_n;r)\right] - \sigma^2(r) \right |}_{I_4}. \end{equation} From Assumption~\ref{ass:unif} and since $R_{i,m_n}, i=1,2,\ldots,b_n$ have identical distributions, we know that $\mathbb{E}\left[\tilde{\sigma}^2_{\emph{\mbox{\tiny OB-I}}}(m_n,b_n)\right] \to \sigma^2$ as $n \to \infty$. This fact and the uniform integrability of $\tilde{\sigma}^2_{\emph{\mbox{\tiny OB-I}}}(m_n,b_n)$ mean that for any $\epsilon>0$, there exist $\ell = \ell(\epsilon)$ and $r_1 = r_1(\epsilon)$ such that for $n \geq \ell$ and $r \geq r_1$ the term $I_4$ in~\eqref{term2} satisfies \begin{equation}\label{term3} \left |\mathbb{E}\left[\tilde{\sigma}^2_{\emph{\mbox{\tiny OB-I}}}(m_n,d_n;r)\right] - \sigma^2(r)\right | \leq \epsilon.\end{equation}  To quantify term $I_3$ in~\eqref{term2}, write \begin{align}\label{varbddep}  \mbox{Var}(\tilde{\sigma}^2_{\emph{\mbox{\tiny OB-I}}}(m_n,d_n;r)) &= \frac{1}{b_n}\mbox{Var}(R_{1,m_n}(r)) + \frac{2}{b_n^2} \sum_{j=1}^{b_n} (b_n-j)\mbox{Cov}(R_{1,m_n}(r), R_{1+j,m_n}(r)) \nonumber \\ &\leq \frac{1}{b_n}\mbox{Var}(R_{1,m_n}(r)) + 16r^2 \left(\frac{1}{b_n}\sum_{j=1}^{b_n}(1- \frac{j}{b_n})\alpha_j\right) \nonumber \\ 
& \leq  \frac{r^2}{4b_n} + 16r^2 \left(\frac{1}{b_n}\sum_{j=1}^{b_n -1}\alpha_j\right) \nonumber \\ & \to 0, \end{align} where $\alpha_j := \alpha(\mathcal{F}_{1,m_n},\mathcal{F}_{1+j,m_n})$ is the strong mixing constant associated the sigma algebras $\sigma(X_1,X_2,\ldots,X_{m_n})$, $\sigma(X_{jd_n+1}, X_{jd_n + 2}, \ldots, X_{jd_n+m_n})$ formed by random variables in batch $1$ and batch $1+j$, the first inequality in~\eqref{varbddep} follows upon application of Corollary 2.5 in~\cite[pp. 347]{2009ethkur} with $u=1,v= \infty,w=\infty$, the second inequality in~\eqref{varbddep} follows since $R_{1,m_n}(r) \in [0,r]$, and the last inequality in~\eqref{varbddep} follows since Assumption~\ref{ass:mixing} implies $\alpha_j \to 0$ implying in turn that the C\'{e}saro sum $b_n^{-1} \sum_{j=1}^{b_n} \alpha_j \to 0$.  

Now by applying~\eqref{firstandthird},~\eqref{term2},~\eqref{term3} and~\eqref{varbddep} in~\eqref{expslit}, and since $\epsilon$ is arbitrary, we see that~\eqref{firststep} holds, that is, $\tilde{\sigma}^2_{\emph{\mbox{\tiny OB-I}}}(m_n,b_n) \inP \sigma^2.$ To complete the first part of the theorem's assertion in~\eqref{statements}, we write  \begin{align}\label{finalssmbatch} \hat{\sigma}^2_{\emph{\mbox{\tiny OB-I}}}(m_n,b_n) &= \frac{1}{b_n} \sum_{j=1}^{b_n} m_n(\hat{\theta}_{i,m_n} - \theta(P))^2 + m_n(\hat{\theta}_n - \theta(P))^2 + \frac{2}{b_n} \sum_{j=1}^{b_n}m_n(\hat{\theta}_{i,m_n} - \theta(P))(\hat{\theta}_n - \theta(P)) \nonumber \\ &= \tilde{\sigma}^2_{\emph{\mbox{\tiny OB-I}}}(m_n,b_n) + \left(\frac{m_n}{n}\right) n(\hat{\theta}_n - \theta(P))^2 \nonumber \\ & \hspace{2in} + 2\left(\sqrt{\frac{m_n}{n}}\right)\sqrt{n}(\hat{\theta}_n - \theta(P))\frac{1}{b_n} \sum_{j=1}^{b_n}\sqrt{m_n} (\hat{\theta}_{i,m_n} - \theta(P)).\end{align} Through prior arguments, we proved that the first term on the right-hand side of~\eqref{finalssmbatch} tends to $\sigma^2$ in probability; also, because $ \sqrt{n}(\hat{\theta}_n - \theta(P)) \inD \sigma Z(0,1)$, and $\beta := \lim_{n \to \infty} m_n/n = 0$, Slutsky's theorem~\eqref{thm:slutsky} ensures that the second term on the right-hand side of~\eqref{finalssmbatch} is $o_P(1).$ To see that the third term on the right-hand side of~\eqref{finalssmbatch} also tends to zero in probability, notice again that $ \sqrt{n}(\hat{\theta}_n - \theta(P)) \inD \sigma Z(0,1)$ and that \begin{align} \sqrt{\frac{m_n}{n}}\mathbb{E}\left[\frac{1}{b_n} \sum_{j=1}^{b_n}\sqrt{m_n} (\hat{\theta}_{i,m_n} - \theta(P)\right] \leq \sqrt{\frac{m_n}{n}}\frac{1}{b_n} \sum_{j=1}^{b_n} \mathbb{E}\left[ \sqrt{m_n} \left | \hat{\theta}_{i,m_n} - \theta(P)\right |\right] \to 0, \end{align} and make use of Slutsky's theorem~\eqref{thm:slutsky}. This proves the first assertion of the theorem in~\eqref{statements}.

To prove the second assertion in~\eqref{statements}, we again apply Slutsky's theorem~\eqref{thm:slutsky} to \begin{equation}\label{slutskytarget}T_{\mbox{\tiny OB-I}}(m_n,b_n) := \frac{\sqrt{n}(\hat{\theta}_n - \theta(P))}{\hat{\sigma}_{\mbox{\tiny OB-I}}(m_n,b_n)}\end{equation} after noticing that the numerator in the expression for $T_{\mbox{\tiny OB-II}}(m_n,b_n)$ converges weakly to $\sigma Z(0,1)$ due to Assumption~\ref{ass:clt} and the denominator converges in probability to $\sigma$ from the first assertion.  

\end{proof}

We now make a few observations regarding Theorem~\ref{thm:ob1smbatch}. \begin{enumerate} \item[(a)] Unlike in the large batch setting ($\beta >0$) of Theorem~\ref{thm:ob1largebatch}, the first assertion of Theorem~\ref{thm:ob1smbatch} guarantees that $\hat{\sigma}^2_{\mbox{\tiny OB-I}} (m_n,b_n)$ is a   consistent estimator of $\sigma^2$. 
\item[(b)] Unlike Theorem~\ref{thm:ob1largebatch}, Theorem~\ref{thm:ob1smbatch} does not need Assumption~\ref{ass:stronginvar} simply due to the fact that $\sigma^2$ is being estimated consistently, implying that the dependence between the numerator and the denominator of $T_{\mbox{\tiny OB-I}}(m_n,b_n)$ does not have to be explicitly modeled. This is what allows using Slutsky's theorem in Theorem~\ref{thm:ob1smbatch}.  
\rp{\item[(c)] Theorem~\ref{thm:ob1smbatch} assumes very little about the overlapping requirement of the batches apart from requiring the number of batches to diverge. In this sense, Theorem 5.3 is fundamentally different from Theorem 5.1; Theorem 5.3 relies on the point estimator $\hat{\sigma}^2_{\mbox{\tiny OB-I}}(m_n,b_n)$ being a consistent estimator of $\sigma^2$, whereas Theorem 5.1 results in a cancellation method that does not rely on the consistency of $\hat{\sigma}^2_{\mbox{\tiny OB-I}}(m_n,b_n)$. This is why Theorem 5.3 insists that $b_{\infty} = \infty$ whereas Theorem 5.1 does not. }
\item[(d)] As is evident from~\eqref{finalssmbatch}, characterizing the next order term for the mean and variance of $\hat{\sigma}^2_{\mbox{\tiny OB-I}}(m_n,b_n)$ (akin to Theorem~\ref{thm:OB1moments}) will involve assuming the nature of higher order terms in the uniform convergence assumption appearing as Assumption~\ref{ass:unif}.
\end{enumerate}

\section{The OB-II Limit}\label{sec:ob2}
In this section, we characterize the weak limit of \begin{equation}\label{ob1prelimit} T_{\mbox{\tiny OB-II}}(m_n,b_n) := \frac{\sqrt{n}(\bar{\theta}_n - \theta(P))}{\hat{\sigma}_{\mbox{\tiny OB-II}}(m_n,b_n)}.\end{equation} As described in Section~\ref{sec:confints}, recall that the OB-II limit $T_{\mbox{\tiny OB-II}}(m_n,b_n)$ differs from the OB-I limit in that it replaces the sectioning estimator $\hat{\theta}_n$ with the batching estimator $\bar{\theta}_n$ as the centering variable. As in the OB-I context, the ensuing sections treat the large batch and small batch regimes separately.

\subsection{Large Batch ($\beta>0$) Regime for OB-II}\label{sec:ob2largebatch}
Theorem~\ref{thm:ob2largebatch} that follows treats the large batch setting ($\beta := \lim_{n \to \infty} m_n/n > 0)$ and asserts that $\hat{\sigma}^2_{\mbox{\tiny OB-II}}(m_n,b_n)/\sigma^2$ and $T_{\mbox{\tiny OB-II}}(m_n,b_n)$ converge weakly to certain functionals of the Wiener process that we denote $\chi^2_{\mbox{\tiny OB-II}}(\beta,b_{\infty})$ and $T_{\mbox{\tiny OB-II}}(\beta,b_{\infty})$, respectively. The proof of Theorem~\ref{thm:ob2largebatch} follows closely along the lines of Theorem~\ref{thm:ob1largebatch}, and we include it in Appendix~\ref{app:ob2largebatch}. 

\rp{\begin{theorem}[OB-II Large Batch Regime] \label{thm:ob2largebatch} Suppose Assumption~\ref{ass:stronginvar} holds, and that $\beta := \lim_{n \to \infty} m_n/n >0.$ Assume also that $b_n \to b_{\infty} \in \{2,3,\ldots,\infty\}$ as $n \to \infty$. Define \begin{numcases}  {\chi^2_{\emph{\mbox{\tiny OB-II}}} (\beta,b_{\infty}) :=} \frac{1}{\kappa_2(\beta,\infty)}\frac{\beta^{-1}}{1- \beta} \int_0^{1-\beta} \left( \tilde{W}_u(\beta) - \frac{1}{1-\beta} \int_{0}^{1-\beta} \tilde{W}_s(\beta)  \, ds \right)^2 du & $b_{\infty} = \infty$; \nonumber \\ \frac{1}{\kappa_2(\beta,b_{\infty})}\frac{1}{\beta} \frac{1}{b_{\infty}} \sum_{j=1}^{b_{\infty}} \left(\tilde{W}_{c_j}(\beta) -  \frac{1}{b_{\infty}} \sum_{i=1}^{b_{\infty}} \tilde{W}_{c_i}(\beta)  \right)^2  & $b_{\infty} \in \mathbb{N}\setminus {1},$ \nonumber \\\end{numcases} where $\tilde{W}_x(\beta) := W(x+\beta) - W(x), x \in [0,1-\beta]$, $\{W(t), t \in [0,1]\}$ is the standard Brownian motion~\citep{1995bil}, $c_i := (i-1)\frac{1-\beta}{b_{\infty}-1}, i = 1,2, \ldots, b_{\infty}$, and $\kappa_2(\beta,b_{\infty})$ is the ``bias-correction" factor  given by \begin{numcases} {\label{biascorrect2again} \kappa_2(\beta,b_{\infty}) :=} 1 & $\beta =0$; \nonumber \\ 1 - 2\left(\frac{\beta}{1-\beta} \wedge 1\right) + \frac{1}{\beta}\left(\frac{\beta}{1-\beta} \wedge 1\right)^2 - \frac{2}{3}\frac{1-\beta}{\beta}\left(\frac{\beta}{1-\beta} \wedge 1 \right)^3 & $\beta>0, b_{\infty} = \infty;$ \nonumber \\
1- \frac{1}{b_{\infty}} - \frac{2}{b_{\infty}}\sum_{h=1}^{b_{\infty}} \left(1 - \frac{h}{b_{\infty}-1} \frac{1-\beta}{\beta}\right)^+(1-h/b_{\infty}) & $\beta>0, b_{\infty} \in \mathbb{N}\setminus {1}.$ \nonumber \\ \end{numcases}
Then, as $n \to \infty$, \begin{equation}\label{swag-samplemean} \hat{\sigma}^2_{\emph{\mbox{\tiny OB-II}}} (m_n,b_n) \inD \sigma^2 \chi^2_{\emph{\mbox{\tiny OB-II}}}(\beta,b_{\infty});\end{equation} and  \begin{numcases} {\label{ob2t-stat} T_{\emph{\mbox{\tiny OB-II}}}(m_n,b_n) \inD } \frac{1}{\sqrt{\chi^2_{\emph{\mbox{\tiny OB-II}}}(\beta,b_{\infty})}}\,\frac{1}{\beta}\frac{1}{ (1-\beta)} \int_0^{1-\beta} \left(W(s+\beta) - W(s)\right) \, ds  & $b_{\infty} = \infty;$ \nonumber \\ \frac{1}{\sqrt{\chi^2_{\emph{\mbox{\tiny OB-II}}}(\beta,b_{\infty})}}\frac{1}{\beta} \frac{1}{b_{\infty}}\sum_{i=1}^{b_{\infty}} W(c_i + \beta) - W(c_i)  & $b_{\infty} \in \mathbb{N} \setminus 1,$ \nonumber \\ \end{numcases} where $c_i := (i-1)\frac{1-\beta}{b_{\infty}-1}, i = 1,2, \ldots, b_{\infty}$.
\end{theorem}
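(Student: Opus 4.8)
The plan is to mirror the proof of Theorem~\ref{thm:ob1largebatch}, using the strong invariance principle (Assumption~\ref{ass:stronginvar}) to replace each batch estimator $\hat{\theta}_{i,m_n}$ by the scaled Brownian increment $\tilde{B}_{i,m_n}$ of~\eqref{browniandiff}, and then isolating a ``dominant'' term built purely from the Wiener process from a remainder that vanishes almost surely. The one structural change from the OB-I argument is that the sample variance is now centered at the batching estimator $\bar{\theta}_n = b_n^{-1}\sum_i\hat{\theta}_{i,m_n}$ rather than at $\hat{\theta}_n$; consequently the role played by the single recentering constant $n^{-1}W(n)$ in the OB-I proof is played here by the Brownian batch mean $\bar{B}_n := b_n^{-1}\sum_k\tilde{B}_{k,m_n}$.

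First I would write $\hat{\sigma}^2_{\mbox{\tiny OB-II}}(m_n,b_n) = \kappa_2(\beta,b_\infty)^{-1}(I_n^{\mbox{\tiny II}} + E_n^{\mbox{\tiny II}})$, where the dominant term is $I_n^{\mbox{\tiny II}} := \sigma^2 b_n^{-1}\sum_{i=1}^{b_n}(\sqrt{m_n}\tilde{B}_{i,m_n} - \sqrt{m_n}\bar{B}_n)^2$ and $E_n^{\mbox{\tiny II}}$ collects the cross terms involving the strong-approximation errors $U_{i,m_n}$ of~\eqref{error}. Decomposing $\hat{\theta}_{i,m_n} - \bar{\theta}_n = U_{i,m_n} + \sigma(\tilde{B}_{i,m_n} - \bar{B}_n) + C_n'$ with $C_n' = -b_n^{-1}\sum_k U_{k,m_n}$, the bounds~\eqref{errbds1} (for $U_{i,m_n}$ and $C_n'$) and a bound of the same order as~\eqref{browniannuggetbds} for $\tilde{B}_{i,m_n} - \bar{B}_n$ (furnished by Theorem~\ref{thm:brownianinv}) show $E_n^{\mbox{\tiny II}}\to 0$ a.s., exactly as in~\eqref{errfinbd}. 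The weak limit of $I_n^{\mbox{\tiny II}}$ is then obtained by the device of~\eqref{Inlimitbinf}--\eqref{Inlimitbfin}: replace $I_n^{\mbox{\tiny II}}$ by an equal-in-distribution object via Brownian scaling $W(n\,\cdot)\overset{d}{=}\sqrt{n}\,W(\cdot)$, so that $\sqrt{m_n}\tilde{B}_{j,m_n}\overset{d}{=}\sqrt{n/m_n}\,(W(c_j^{(n)}/n + m_n/n) - W(c_j^{(n)}/n))$ with $c_j^{(n)} := (j-1)(n-m_n)/(b_n-1)$, and pass to the limit as a Riemann sum over the resolution-$\delta_n$ lattice when $b_\infty=\infty$ and termwise when $b_\infty<\infty$. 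Dividing by $\kappa_2$ gives both branches of $\chi^2_{\mbox{\tiny OB-II}}(\beta,b_\infty)$ and establishes~\eqref{swag-samplemean}.

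The step I expect to be the main obstacle is verifying that $\kappa_2(\beta,b_\infty)$ in~\eqref{biascorrect2again} is exactly the normalization that makes $\mathbb{E}[\chi^2_{\mbox{\tiny OB-II}}(\beta,b_\infty)]=1$. For this I would compute the expectation of the unnormalized limiting functional using the triangular overlap covariance $\mathbb{E}[\tilde{W}_x(\beta)\tilde{W}_y(\beta)] = (\beta - |x-y|)^+$, which holds because $\tilde{W}_x(\beta)=W(x+\beta)-W(x)$ is the increment over $[x,x+\beta]$ and the covariance equals the length of the overlap of the two windows. Writing the recentered integral as $\int_0^{1-\beta}\tilde{W}_u(\beta)^2\,du - (1-\beta)\bar{W}^2$ with $\bar{W} := (1-\beta)^{-1}\int_0^{1-\beta}\tilde{W}_s(\beta)\,ds$, the computation reduces to the double integral $\int_0^{1-\beta}\int_0^{1-\beta}(\beta - |s-t|)^+\,ds\,dt = 2\int_0^{\min(\beta,1-\beta)}(1-\beta-r)(\beta-r)\,dr$ and ultimately to solving $\kappa_2 = 1 - \beta^{-1}\mathbb{E}[\bar{W}^2]$. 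The cutoff $\min(\beta,1-\beta)$ is precisely the origin of the factor $\frac{\beta}{1-\beta}\wedge 1$ in~\eqref{biascorrect2again}: the threshold $\beta=1/2$ decides whether two windows can ever be disjoint. The finite-$b_\infty$ branch follows identically, with the integral replaced by $\sum_{i,j}(\beta - |c_i-c_j|)^+$ reorganized by lag $h=|i-j|$, which supplies both the weight $(1-h/b_\infty)$ and the truncation $(1 - \frac{h}{b_\infty-1}\frac{1-\beta}{\beta})^+$.

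Finally, for the Studentized root I would treat the numerator by the same substitution, $\sqrt{n}(\bar{\theta}_n - \theta(P)) = \sigma\, b_n^{-1}\sum_i \sqrt{n}\,\tilde{B}_{i,m_n} + o(1)$ a.s., where $\sqrt{n}\,\tilde{B}_{i,m_n}\overset{d}{=}(n/m_n)(W(c_i^{(n)}/n+m_n/n)-W(c_i^{(n)}/n)) \to \beta^{-1}\tilde{W}_{c_i}(\beta)$, giving the numerator limit $\sigma\beta^{-1}b_\infty^{-1}\sum_i\tilde{W}_{c_i}(\beta)$ and its integral analogue. Because numerator and denominator are built on the \emph{same} Wiener process supplied by Assumption~\ref{ass:stronginvar}, their joint convergence is automatic, and the continuous mapping theorem applied to the ratio cancels $\sigma$ and yields~\eqref{ob2t-stat}. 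The only additional care is to note that $\chi^2_{\mbox{\tiny OB-II}}(\beta,b_\infty)$ is almost surely strictly positive, so that the ratio is well defined, which holds since the recentered increments are not a.s. identically zero.
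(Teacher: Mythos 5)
Your proposal follows essentially the same route as the paper's proof in Appendix C: the same decomposition into a dominant Brownian quadratic term centered at the Brownian batch mean plus a strong-approximation remainder bounded via Assumption~\ref{ass:stronginvar} and Theorem~\ref{thm:brownianinv}, the same Riemann-sum/termwise limit argument for the two $b_\infty$ cases, the same overlap-covariance calculation identifying $\kappa_2$ (the paper does it pre-limit via $m_n\mathbb{E}[\tilde{B}_{i,m_n}\tilde{B}_{j,m_n}]=(1-\tfrac{|i-j|}{b_n-1}\tfrac{n-m_n}{m_n})^+$, you do it on the limiting functional, which is equivalent), and the same joint-convergence/Slutsky treatment of the Studentized root. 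The argument is correct as proposed.
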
}

\begin{proof}{Proof} See Appendix~\ref{app:ob2largebatch}.

\end{proof}

\begin{figure}[ht]
\centering                
{\includegraphics[clip, trim=0cm 2.5cm 0cm 0cm, width=1.00\textwidth]{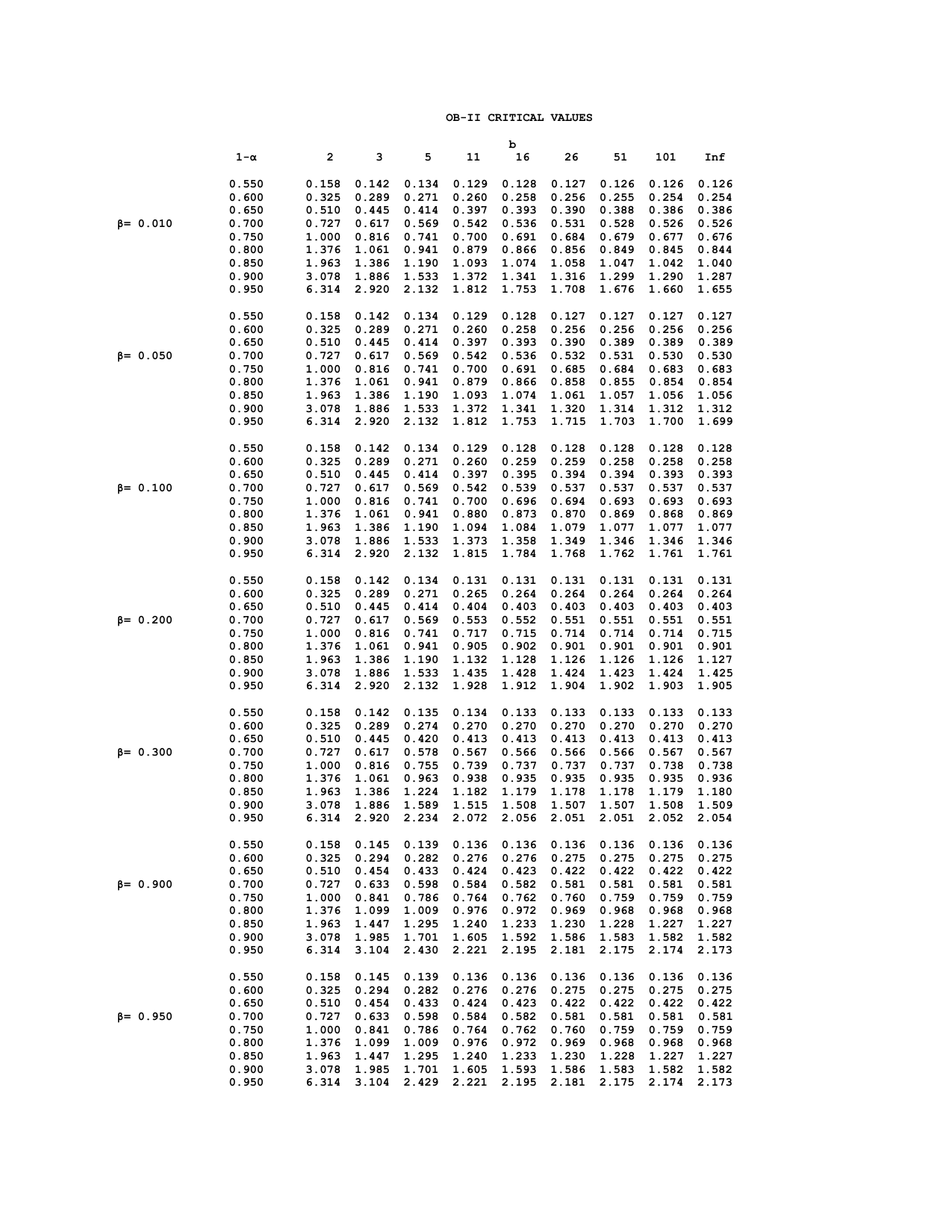}}
\caption{$T_{\mbox{\tiny OB-II}}$ Critical Values. The table displays critical values $t_{\mbox{\tiny OB-II},{\tiny 1-\alpha}}:= \inf \{r: P(T_{\mbox{\tiny OB-II}}(\beta,b_{\infty})  \leq r) = 1-\alpha \}$ associated with the OB-II distribution as a function of $1-\alpha$, the asymptotic batch size $\beta$ and the asymptotic number of batches $b_{\infty}$.}\label{fig:obt2} 
\end{figure}

We make a number of observations in light of Theorem~\ref{thm:ob2largebatch}.
\begin{enumerate} \item[(a)] As in Theorem~\ref{thm:ob1largebatch}, we see that the variance parameter $\sigma^2$ is not estimated consistently in Theorem~\ref{thm:ob2largebatch}. Instead the estimator $\hat{\sigma}^2_{\mbox{\tiny OB-II}}(m_n,b_n)$ converges weakly to the product of $\sigma^2$ and $\chi^2_{\mbox{\tiny OB-II}}(\beta,b_{\infty})$.  Again, we slightly abuse notation and define the weak limit appearing in~\eqref{ob2t-stat} as the $T_{\mbox{\tiny OB-II}}(\beta,b_{\infty})$ random variable.   \item[(b)] Unlike the the OB-I interval estimator, the OB-II interval estimator uses $\bar{\theta}_n$ as the centering variable and when estimating the variance constant. For this reason, and as we shall briefly discuss later, this makes the OB-II estimator attractive from a computational standpoint. \rp{\item[(c)] Like Theorem~\ref{thm:ob1largebatch}, Theorem~\ref{thm:ob2largebatch} requires Assumption~\ref{ass:stronginvar} to hold. }
\item[(d)] As can be seen, the ``bias correction'' factor $\kappa_2(\beta,b_{\infty})$ in~\eqref{biascorrect2again} for the OB-II context is much more complicated. The OB-II analogue of the OB-I asymptotic variance appearing in~\eqref{asympexp2} of Theorem~\ref{thm:OB1moments} has been elusive. \item[(e)] The table in Figure~\ref{fig:obt2} displays the critical values $t_{\mbox{\tiny OB-II},{\tiny 1-\alpha}}(\beta,b_{\infty}):= \min_x P(T_{\mbox{\tiny OB-II}}(\beta,b_{\infty}) \leq x) \geq 1-\alpha$ associated with the $T_{\mbox{\tiny OB-II}}$ distribution as a function of $1-\alpha$ and for different values of the parameters $\beta, b_{\infty}$. R and MATLAB code for calculating the critical values can be obtained through \texttt{https://web.ics.purdue.edu/$\sim$pasupath}.
\end{enumerate}

\subsection{Small Batch ($\beta=0$) Regime for OB-II}\label{sec:ob2smallbatch}
We now treat the small batch regime ($\beta := \lim_{n \to \infty} m_n/n = 0)$ for OB-II. Like Theorem~\ref{thm:ob1largebatch}, Theorem~\ref{thm:ob2largebatch} needs the strong invariance Assumption~\ref{ass:stronginvar} to hold so that the dependence across batches can be characterized.

\begin{theorem}[OB-II Small Batch Regime]\label{thm:ob2smbatch}  Suppose Assumptions~\ref{ass:stat}--\ref{ass:stronginvar} hold, and that $\beta = \lim_{n \to \infty} m_n/n =0.$ Assume that the number of batches $b_n \to \infty$. Then, as $n \to \infty$, \begin{equation}\label{statements2} \hat{\sigma}^2_{\emph{\mbox{\tiny OB-II}}}(m_n,b_n) \inP \sigma^2; \quad \emph{ and } \quad  T_{\emph{\mbox{\tiny OB-II}}}(m_n,b_n) \inD Z(0,1). \end{equation} \end{theorem}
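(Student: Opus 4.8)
The plan is to mirror the structure of the proof of Theorem~\ref{thm:ob1smbatch}, isolating the two genuinely new ingredients: the consistency of $\hat\sigma^2_{\mbox{\tiny OB-II}}(m_n,b_n)$ and a central limit theorem for the batching estimator $\bar\theta_n$ in the $\beta=0$ regime. For the variance estimator I would start from the algebraic identity $\sum_{i=1}^{b_n}(\hat\theta_{i,m_n} - \bar\theta_n)^2 = \sum_{i=1}^{b_n}(\hat\theta_{i,m_n} - \theta(P))^2 - b_n(\bar\theta_n - \theta(P))^2$, which (recalling $\kappa_2(0,\infty)=1$) gives
$$\hat\sigma^2_{\mbox{\tiny OB-II}}(m_n,b_n) = \tilde\sigma^2_{\mbox{\tiny OB-I}}(m_n,b_n) - m_n(\bar\theta_n - \theta(P))^2,$$
where $\tilde\sigma^2_{\mbox{\tiny OB-I}}(m_n,b_n)$ is exactly the quantity shown to converge to $\sigma^2$ in probability in the proof of Theorem~\ref{thm:ob1smbatch} (via uniform integrability from Assumption~\ref{ass:unif} and the Ces\`aro decay of the mixing coefficients from Assumption~\ref{ass:mixing}). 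It then suffices to show the second term is $o_P(1)$; writing it as $(m_n/n)\cdot n(\bar\theta_n - \theta(P))^2$ and using $\beta=\lim_n m_n/n = 0$ together with the tightness of $\sqrt n(\bar\theta_n - \theta(P))$ (a byproduct of the next step) disposes of it by Slutsky's theorem.

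The central step is to establish $\sqrt n(\bar\theta_n - \theta(P)) \inD \sigma Z(0,1)$. Unlike the OB-I numerator, this does not follow directly from Assumption~\ref{ass:clt}, which is precisely why Theorem~\ref{thm:ob2smbatch} invokes the strong invariance Assumption~\ref{ass:stronginvar} whereas Theorem~\ref{thm:ob1smbatch} does not. I would prove the equivalent statement $\sqrt n(\bar\theta_n - \hat\theta_n) \inP 0$ and then conclude by Slutsky together with the CLT $\sqrt n(\hat\theta_n - \theta(P)) \inD \sigma Z(0,1)$. On the strong-invariance probability space, write $\hat\theta_{j,m_n} - \theta(P) = \sigma\tilde B_{j,m_n} + U_{j,m_n}$ and $\hat\theta_n - \theta(P) = \sigma n^{-1}W(n) + e_n$, with $\tilde B_{j,m_n}$ the normalized Wiener increment from~\eqref{browniandiff} and the remainders controlled uniformly by Assumption~\ref{ass:stronginvar}, so that
$$\sqrt n(\bar\theta_n - \hat\theta_n) = \sigma\sqrt n\Big(\tfrac{1}{b_n}\textstyle\sum_{j=1}^{b_n}\tilde B_{j,m_n} - n^{-1}W(n)\Big) + \sqrt n\Big(\tfrac{1}{b_n}\textstyle\sum_{j=1}^{b_n}U_{j,m_n} - e_n\Big).$$

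For the Gaussian part I would compute second moments directly. Since every batch lies in $[0,n]$ we have $\mbox{Cov}(W(a_j+m_n) - W(a_j),\,W(n)) = m_n$, whence $\mbox{Cov}(b_n^{-1}\sum_j\tilde B_{j,m_n},\,n^{-1}W(n)) = 1/n$ exactly, matching $\mbox{Var}(n^{-1}W(n)) = 1/n$; a trapezoidal-coverage computation (each interior time is covered by $\approx m_n/d_n$ batches) gives $\mbox{Var}(b_n^{-1}\sum_j\tilde B_{j,m_n}) = n^{-1}(1+o(1))$ as $m_n/n\to 0$ and $b_n\to\infty$. Hence the Gaussian part has variance $o(1/n)$ and, scaled by $\sqrt n$, tends to $0$ in $L^2$ and thus in probability. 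Once $\sqrt n(\bar\theta_n - \hat\theta_n)\inP 0$ is in hand, the second assertion of~\eqref{statements2} follows: the numerator of $T_{\mbox{\tiny OB-II}}(m_n,b_n)$ converges weakly to $\sigma Z(0,1)$, the denominator $\hat\sigma_{\mbox{\tiny OB-II}}(m_n,b_n)\inP\sigma$ by the first part, and Slutsky's theorem yields $T_{\mbox{\tiny OB-II}}(m_n,b_n)\inD Z(0,1)$.

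The main obstacle will be the accumulated remainder $\sqrt n\,b_n^{-1}\sum_{j=1}^{b_n}U_{j,m_n}$. The sectioning remainder is harmless, since $\sqrt n\,e_n = O(n^{-\delta}\sqrt{\log n})\to 0$ almost surely, but the batch remainders do not obviously cancel: the uniform bound $\max_j|U_{j,m_n}|\le \sigma\Gamma\, m_n^{-1/2-\delta}\sqrt{\log m_n}$ (a windowed consequence of Assumption~\ref{ass:stronginvar}, established exactly as the bound~\eqref{errbds1} in the proof of Theorem~\ref{thm:ob1largebatch}) yields only $|\sqrt n\,b_n^{-1}\sum_j U_{j,m_n}| \le \sigma\Gamma\,(n/m_n)^{1/2}m_n^{-\delta}\sqrt{\log m_n}$, which vanishes only when the batch size grows fast enough that $n/m_n = o(m_n^{2\delta}/\log m_n)$. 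I would therefore either carry this growth condition explicitly (it is implicit in the small-batch regime, where one always takes $m_n\to\infty$, as is already needed for $\tilde\sigma^2_{\mbox{\tiny OB-I}}(m_n,b_n)\inP\sigma^2$), or sharpen the estimate by exploiting cancellation among the signed remainders; pinning down the weakest sufficient rate is the delicate point of the argument.
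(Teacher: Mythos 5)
Your first assertion (consistency of $\hat{\sigma}^2_{\mbox{\tiny OB-II}}(m_n,b_n)$) follows the paper's own route almost verbatim: the same surrogate $\tilde{\sigma}^2$ centered at $\theta(P)$, uniform integrability from Assumption~\ref{ass:unif}, the mixing/C\'{e}saro bound from Assumption~\ref{ass:mixing}, and Slutsky to remove the centering correction; whether one uses your exact identity with the minus sign or the paper's expansion in~\eqref{finalssmbatch2} is immaterial. The real divergence is in the second assertion. The paper does not prove a separate CLT for $\bar{\theta}_n$: its proof applies Slutsky with the numerator treated as $\sqrt{n}(\hat{\theta}_n - \theta(P))$ and cites Assumption~\ref{ass:clt} directly, in effect identifying the batching and sectioning estimators. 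You instead set out to prove $\sqrt{n}(\bar{\theta}_n - \hat{\theta}_n) \inP 0$ from Assumption~\ref{ass:stronginvar}; your covariance computation for the Wiener part is correct (for $d_n \leq m_n$; it fails for spaced batches $d_n \gg m_n$, where $\mbox{Var}(b_n^{-1}\sum_j \tilde{B}_{j,m_n})$ is of order $1/(b_n m_n) \gg 1/n$, so the overlap regime should be stated), but this more demanding route is exactly where your argument is left incomplete.

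The gap is the one you flag yourself, and it is genuine: the only control you have on the batch remainders is the a.s.\ uniform bound $\max_j |U_{j,m_n}| \leq \sigma \Gamma\, m_n^{-1/2-\delta}\sqrt{\log m_n}$, giving $|\sqrt{n}\, b_n^{-1}\sum_{j=1}^{b_n} U_{j,m_n}| \leq \sigma \Gamma\, (n/m_n)^{1/2} m_n^{-\delta}\sqrt{\log m_n}$, and this does \emph{not} vanish under the theorem's hypotheses, which require only $m_n/n \to 0$ and $b_n \to \infty$ (with $m_n \to \infty$ implicit). For instance $m_n = \lfloor n^{\epsilon} \rfloor$ with $0 < \epsilon < 1/(1+2\delta)$ satisfies every stated hypothesis yet makes your bound diverge, so the condition $n/m_n = o(m_n^{2\delta}/\log m_n)$ is an additional rate assumption, not something ``implicit in the small-batch regime.'' Nor can the deficit be repaired by appealing to cancellation among the signed $U_{j,m_n}$: Assumption~\ref{ass:stronginvar} supplies only an almost-sure magnitude bound on each windowed remainder and no sign, martingale, or independence structure across batches, so there is no evident way to improve on the triangle inequality from the stated assumptions alone. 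Since the tightness of $\sqrt{n}(\bar{\theta}_n - \theta(P))$ that you invoke in the first part (to kill $m_n(\bar{\theta}_n - \theta(P))^2$) is itself a byproduct of this unproved step, both halves of your argument rest on it. To close the proof you must either impose the batch-size growth condition explicitly, or handle the numerator as the paper does rather than via a separate strong-invariance comparison of $\bar{\theta}_n$ with $\hat{\theta}_n$.
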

\begin{proof}
Since $\beta=0,$ $\kappa_2(\beta) = 1$ and recall that \begin{equation} \label{varparest2again} \hat{\sigma}^2_{\mbox{\tiny OB-II}}(m_n,b_n) := \frac{m_n}{b_n} \sum_{i=1}^{b_n} (\hat{\theta}_{i,m_n} - \bar{\theta}_n)^2; \quad \bar{\theta}_n:= \frac{1}{b_n}\sum_{i=1}^{b_n} \hat{\theta}_{i,m_n}.\end{equation} Also, define \begin{equation} \label{varparesthypo2} \tilde{\sigma}^2_{\mbox{\tiny OB-II}}(m_n,b_n) := \frac{1}{b_n} \sum_{i=1}^{b_n} \underbrace{m_n(\hat{\theta}_{i,m_n} - \theta(P))^2}_{R_{i,m_n}}; \quad \tilde{\sigma}^2_{\mbox{\tiny OB-II}}(m_n,d_n;r) := \frac{1}{b_n} \sum_{i=1}^{b_n} \underbrace{R_{i,m_n}\mathbb{I}_{[0,r]}(R_{i,m_n})}_{R_{i,m_n}(r)},\end{equation} From arguments identical to that in the proof of Theorem~\ref{thm:ob1smbatch} (specifically, \eqref{expslit}--\eqref{varbddep}), we see that $\tilde{\sigma}^2_{\mbox{\tiny OB-II}}(m_n,b_n)$ consistently estimates $\sigma^2$, that is,
\begin{equation}\label{firststep2} \tilde{\sigma}^2_{\mbox{\tiny OB-II}}(m_n,b_n) \inP \sigma^2. \end{equation} To complete the first part of the theorem's assertion in~\eqref{statements2}, we write  \begin{align}\label{finalssmbatch2} \hat{\sigma}^2_{\mbox{\tiny OB-II}}(m_n,b_n) &= \frac{1}{b_n} \sum_{j=1}^{b_n} m_n(\hat{\theta}_{i,m_n} - \theta(P))^2 + m_n(\bar{\theta}_n - \theta(P))^2 + \frac{2}{b_n} \sum_{j=1}^{b_n}m_n(\hat{\theta}_{i,m_n} - \theta(P))(\bar{\theta}_n - \theta(P)) \nonumber \\ &= \tilde{\sigma}^2_{\mbox{\tiny OB-II}}(m_n,b_n) + \left(\frac{m_n}{m_nb_n}\right) m_nb_n(\bar{\theta}_n - \theta(P))^2 \nonumber \\ & \hspace{1.5in} + 2\left(\sqrt{\frac{m_n}{m_nb_n}}\right)\sqrt{m_nb_n}(\bar{\theta}_n - \theta(P))\frac{1}{b_n} \sum_{j=1}^{b_n}\sqrt{m_n} (\hat{\theta}_{i,m_n} - \theta(P)).\end{align} From~\eqref{firststep2}, we see that the first term on the right-hand side of~\eqref{finalssmbatch2} tends to $\sigma^2$ in probability; also, because $ \sqrt{n}(\hat{\theta}_n - \theta(P)) \inD \sigma Z(0,1)$, and $\beta := \lim_{n \to \infty} m_n/n = 0$, Slutsky's theorem~\eqref{thm:slutsky} ensures that the second term on the right-hand side of~\eqref{finalssmbatch2} is $o_P(1).$ To see that the third term on the right-hand side of~\eqref{finalssmbatch2} also tends to zero in probability, notice again that $ \sqrt{n}(\hat{\theta}_n - \theta(P)) \inD \sigma Z(0,1)$ and that \begin{align} \sqrt{\frac{m_n}{n}}\mathbb{E}\left[\frac{1}{b_n} \sum_{j=1}^{b_n}\sqrt{m_n} (\hat{\theta}_{i,m_n} - \theta(P)\right] \leq \sqrt{\frac{m_n}{n}}\frac{1}{b_n} \sum_{j=1}^{b_n} \mathbb{E}\left[ \sqrt{m_n} \left | \hat{\theta}_{i,m_n} - \theta(P)\right |\right] \to 0, \end{align} and make use of Slutsky's theorem~\eqref{thm:slutsky}. This proves the first assertion of the theorem in~\eqref{statements2}.

To prove the second assertion in~\eqref{statements2}, we again apply Slutsky's theorem~\eqref{thm:slutsky} to \begin{equation}\label{slutskytarget}T_{\mbox{\tiny OB-II}}(m_n,b_n) := \frac{\sqrt{n}(\hat{\theta}_n - \theta(P))}{\hat{\sigma}_{\mbox{\tiny OB-II}}(m_n,b_n)}\end{equation} after noticing that the numerator in the expression for $T_{\mbox{\tiny OB-II}}(m_n,b_n)$ converges weakly to $\sigma Z(0,1)$ due to Assumption~\ref{ass:clt} and the denominator converges in probability to $\sigma$ from the first assertion. 
\end{proof}

\section{The OB-III Limit}\label{sec:ob3}
Theorem~\ref{thm:ob3largebatch} that follows treats the large batch setting ($\beta := \lim_{n \to \infty} m_n/n > 0)$ and asserts that $\hat{\sigma}^2_{\mbox{\tiny OB-III}}(m_n,b_n)/\sigma^2$ and $T_{\mbox{\tiny OB-III}}(m_n,b_n)$ converge weakly to certain functionals of the Wiener process that we denote $\chi^2_{\mbox{\tiny OB-III}}(\beta,b_{\infty})$ and $T_{\mbox{\tiny OB-III}}(\beta,b_{\infty})$, respectively. Since the proof of Theorem~\ref{thm:ob3largebatch} follows closely along the lines of Theorem~\ref{thm:ob1largebatch} and Theorem~\ref{thm:ob2largebatch}, we do not provide a proof. 

\rp{\begin{theorem}[OB-III Large Batch Regime]\label{thm:ob3largebatch} Suppose that  Assumption~\ref{ass:stat}, Assumption~\ref{ass:mixing} and Assumption~\ref{ass:stronginvar} hold, and that $\beta = \lim_{n \to \infty} m_n/n \in (0,1).$ Assume also that $b_n \to b_{\infty} \in \{2,3,\ldots,\infty\}$ as $n \to \infty$, and that the weighting function $f:[0,1] \to \mathbb{R}^+$ satisfies the stipulations in~\eqref{weightingcond}.  Define \begin{numcases}{\label{SwagT}\chi^2_{\emph{\mbox{\tiny OB-III}}}(\beta,b_{\infty}) :=} \frac{1}{\beta^{-1} - 1 } \int_{0}^{\beta^{-1} - 1} \left( \int_0^1 f(v) B_u(v) \diff v \right)^2 \, \diff u & $b_{\infty} = \infty$; \nonumber \\ \frac{1}{b_{\infty}}\sum_{j=1}^{b_{\infty}} \left( \int_0^1 f(v) B_{c_j/\beta}(v) \diff v\right)^2 \diff u & $b_{\infty} \in \mathbb{N}\setminus \{1\}$, \nonumber\end{numcases} where $c_j := (j-1)\frac{1-\beta}{b_{\infty}-1}$ and $$B_s(t) := W(s+t) - W(s) - t(W(s+1) - W(s)), \quad s \in [0,1-t], t \in [0,1].$$ Then, as $n \to \infty$, \begin{equation}\label{swag3} \hat{\sigma}^2_{\emph{\mbox{\tiny OB-III}}}(m_n,b_n) \inD \sigma^2\chi^2_{\emph{\mbox{\tiny OB-III}}}(\beta,b_{\infty}); \quad \emph{ and } \quad T_{\emph{\mbox{\tiny OB-III}}}(m_n,b_n) \inD \frac{W(1)}{\sqrt{\chi^2_{\emph{\mbox{\tiny OB-III}}}(\beta,b_{\infty})}}.\end{equation} \end{theorem}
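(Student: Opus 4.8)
The plan is to follow the proof of Theorem~\ref{thm:ob1largebatch} almost verbatim, the essential tool again being the strong invariance Assumption~\ref{ass:stronginvar}: I would use it to replace the empirical standardized time series $T_{i,m_n}(\cdot)$ of~\eqref{areaest} by an explicit functional of a single Wiener process $\{W(t),t\ge 0\}$, so that both $\hat\sigma^2_{\mbox{\tiny OB-III}}(m_n,b_n)$ and $T_{\mbox{\tiny OB-III}}(m_n,b_n)$ become continuous functionals of $W$ whose weak limits fall out of the continuous mapping theorem. Write $s_i := (i-1)d_n$ for the starting index of batch $i$ and $c_i := s_i/m_n$ for its rescaled offset. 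By stationarity (Assumption~\ref{ass:stat}), the strong invariance principle may be applied window-by-window, exactly as the approximation $\tilde B_{j,m_n}$ is used in the proof of Theorem~\ref{thm:ob1largebatch}; over the window $[s_i+1,\,s_i+\lfloor m_n t\rfloor]$ it yields $\lfloor m_n t\rfloor(\hat\theta_{i,\lfloor m_n t\rfloor}-\theta(P)) = \sigma(W(s_i+\lfloor m_n t\rfloor)-W(s_i)) + \mathcal{E}_{i,n}(t)$, with the analogous expansion for $\hat\theta_{i,m_n}$.

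Substituting these expansions into the definition of $T_{i,m_n}(t)$ in~\eqref{areaest} gives $T_{i,m_n}(t) = \hat B_{i,n}(t) + \mathcal{E}_{i,n}(t)/(\sigma\sqrt{m_n})$, where $\hat B_{i,n}(t) := m_n^{-1/2}[(W(s_i+\lfloor m_n t\rfloor)-W(s_i)) - (\lfloor m_n t\rfloor/m_n)(W(s_i+m_n)-W(s_i))]$. A Brownian rescaling of the increments of $W$ over batch $i$ identifies $\hat B_{i,n}(\cdot)$ with a discretized Brownian bridge converging uniformly on $[0,1]$ to $B_{c_i}(\cdot)$, where $c_i \to (i-1)\frac{1-\beta}{\beta(b_\infty-1)}$ in the finite case (equal to the $c_j/\beta$ appearing in the statement) and the $c_i$ fill $[0,\beta^{-1}-1]$ in the infinite case. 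Since $f\in C^2[0,1]$ and the paths of $B_{c_i}$ are continuous, the inner Riemann sum in~\eqref{areaest} converges, $\frac{1}{m_n}\sum_{j=1}^{m_n} f(j/m_n)\,T_{i,m_n}(j/m_n) \to \int_0^1 f(v)B_{c_i}(v)\,dv$, so that $A_{i,m_n} \to \sigma^2(\int_0^1 f(v)B_{c_i}(v)\,dv)^2$; here the normalization~\eqref{weightingcond} is what makes each limiting term have unit mean and hence renders $\hat\sigma^2_{\mbox{\tiny OB-III}}$ asymptotically unbiased without an explicit correction factor.

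It then remains to pass to the limit in the outer average $\frac{1}{b_n}\sum_{i=1}^{b_n}A_{i,m_n}$. When $b_\infty=\infty$ the offsets $c_i$ are equally spaced with vanishing mesh across $[0,\beta^{-1}-1]$, so the average is itself a Riemann sum and converges a.s. (by path-continuity of $u\mapsto\int_0^1 f(v)B_u(v)\,dv$) to $\frac{1}{\beta^{-1}-1}\int_0^{\beta^{-1}-1}(\int_0^1 f(v)B_u(v)\,dv)^2\,du$; when $b_\infty<\infty$ there are finitely many terms and the sum converges to $\frac{1}{b_\infty}\sum_{j=1}^{b_\infty}(\int_0^1 f(v)B_{c_j/\beta}(v)\,dv)^2$. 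This gives the first assertion of~\eqref{swag3}. For the second, the strong invariance gives $\sqrt{n}(\hat\theta_n-\theta(P)) = \sigma\,n^{-1/2}W(n) + o(1)$ a.s. with $n^{-1/2}W(n)\overset{d}{=}W(1)$; since numerator and denominator are jointly continuous functionals of the same $W$, the continuous mapping theorem delivers $T_{\mbox{\tiny OB-III}}(m_n,b_n)\inD W(1)/\sqrt{\chi^2_{\mbox{\tiny OB-III}}(\beta,b_\infty)}$, precisely as in the closing step of the proof of Theorem~\ref{thm:ob1largebatch}.

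The main obstacle I anticipate is the uniform control of the remainder $\sup_{t\in[0,1]}|\mathcal{E}_{i,n}(t)|/(\sigma\sqrt{m_n})$, and its uniformity over the (in the case $b_\infty=\infty$, unboundedly many) batches $i=1,\ldots,b_n$. The difficulty is concentrated near $t=0$, where the within-batch sample size $\lfloor m_n t\rfloor$ is small and the strong-invariance bound $\Gamma\,\lfloor m_n t\rfloor^{-1/2-\delta}\sqrt{\log m_n}$ degrades; one must verify that the rescaling factor $(\lfloor m_n t\rfloor/m_n)^{1/2}=t^{1/2}$ compensates this blow-up so that the sup-norm error still vanishes. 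This is exactly the point at which the modulus-of-continuity estimate for the Wiener process (Theorem~\ref{thm:brownianinv}, already invoked in the proof of Theorem~\ref{thm:ob1largebatch} to bound $H_{j,m_n}$) must be combined with the window-wise strong approximation to produce a bound uniform in both $t$ and $i$. By contrast, the cross-batch dependence created by overlap requires no separate argument, since every quantity in play is a deterministic functional of the one process $W$.
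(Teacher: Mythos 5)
Your proposal is correct and takes essentially the route the paper intends: the paper in fact omits the proof of Theorem~\ref{thm:ob3largebatch}, stating only that it ``follows closely along the lines of Theorem~\ref{thm:ob1largebatch} and Theorem~\ref{thm:ob2largebatch},'' and your argument --- window-wise strong invariance to replace each standardized time series $T_{i,m_n}(\cdot)$ by a discretized Brownian bridge, Riemann-sum passage to the limits $\int_0^1 f(v)B_{c_i}(v)\,\diff v$ and then to the outer average, and the same replace-in-distribution-then-take-limits cancellation step for the Studentized statistic --- is precisely that adaptation. Your diagnosis of the one genuinely new technical point relative to Theorem~\ref{thm:ob1largebatch} (uniform vanishing of the strong-approximation error over sub-batches, where the factor $\lfloor m_n t\rfloor/\sqrt{m_n}$ compensates the degradation of the window-level bound near $t=0$) is also the correct resolution, since the resulting error $\Gamma\,\bigl(\lfloor m_n t\rfloor/m_n\bigr)^{1/2}\lfloor m_n t\rfloor^{-\delta}\sqrt{\log m_n}$ is maximized at the endpoints $\lfloor m_n t\rfloor\in\{1,m_n\}$ and vanishes there, uniformly in $i$.
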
}

We conclude with a corresponding result in the small batch regime. 

\begin{theorem}[OB-III Small Batch Regime]\label{thm:ob3smbatch}  Suppose Assumptions~\ref{ass:stat}--\ref{ass:unif} hold, and that $\beta = \lim_{n \to \infty} m_n/n =0.$ Assume that the asymptotic number of batches $b_{\infty} := \lim_n b_n = \infty$, and that the weighting function $f:[0,1] \to \mathbb{R}^+$ satisfies the stipulations in~\eqref{weightingcond}. Then, as $n \to \infty$, \begin{equation}\label{statementsagain} \hat{\sigma}^2_{\emph{\mbox{\tiny OB-III}}}(m_n,b_n) \inP \sigma^2; \quad \emph{ and } \quad  T_{\emph{\mbox{\tiny OB-III}}}(m_n,b_n) \inD Z(0,1). \end{equation} \end{theorem}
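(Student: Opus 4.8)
The plan is to follow the template of the small-batch OB-I result (Theorem~\ref{thm:ob1smbatch}), replacing the squared batch error $R_{i,m_n}$ there by the weighted-area summand $A_{i,m_n}$ of~\eqref{areaest}, and to close with Slutsky's theorem. Concretely, I would first show that $\hat{\sigma}^2_{\mbox{\tiny OB-III}}(m_n,b_n) = b_n^{-1}\sum_{i=1}^{b_n} A_{i,m_n} \inP \sigma^2$, and then deduce the limit of $T_{\mbox{\tiny OB-III}}(m_n,b_n) = \sqrt{n}(\hat{\theta}_n - \theta(P))/\hat{\sigma}_{\mbox{\tiny OB-III}}(m_n,b_n)$ by combining this with the CLT of Assumption~\ref{ass:clt}.

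For the consistency, the first and decisive task is to identify $\lim_n \mathbb{E}[A_{i,m_n}]$. Write $A_{i,m_n} = W_{i,m_n}^2$ with $W_{i,m_n} := m_n^{-1}\sum_{j=1}^{m_n} f(j/m_n)\,\sigma\,T_{i,m_n}(j/m_n)$. Expanding the square reduces the mean to a Riemann double sum in the covariance kernel $\mathbb{E}[T_{i,m_n}(s)T_{i,m_n}(t)]$ of the standardized time series of~\eqref{areaest}. I would show this kernel converges to $s \wedge t - st$, the covariance of the standard Brownian bridge $B_0$; the overlap structure is fixed by stationarity (Assumption~\ref{ass:stat}) and the cross terms are rendered negligible by the summability of autocovariances implied by the strong mixing of Assumption~\ref{ass:mixing}. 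Passing the convergent double sum to the limit and invoking the weighting condition~\eqref{weightingcond}, $\mathbb{E}[(\int_0^1 f(t) B_0(t)\,dt)^2] = 1$, then yields $\mathbb{E}[A_{i,m_n}] \to \sigma^2$, while the $(2+\delta_0)$-moment hypothesis (Assumption~\ref{ass:unif}), propagated through the bounded smooth functional $f \in C^2[0,1]$, supplies the uniform integrability of $\{A_{i,m_n}\}$.

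With the limiting mean and uniform integrability in hand, I would reproduce the truncation argument of~\eqref{expslit}--\eqref{varbddep} with $A_{i,m_n}$ in place of $R_{i,m_n}$: truncate at level $r$ to form $A_{i,m_n}(r) := A_{i,m_n}\mathbb{I}_{[0,r]}(A_{i,m_n})$, use uniform integrability to make the truncation error uniformly small, and bound the variance of $b_n^{-1}\sum_i A_{i,m_n}(r)$ by $b_n^{-1}\mbox{Var}(A_{1,m_n}(r)) + C r^2\, b_n^{-1}\sum_{j} \alpha_j$. Because the truncated summands are bounded and separated batches are weakly dependent, the Ces\`{a}ro average of the mixing coefficients vanishes (exactly as in Theorem~\ref{thm:ob1smbatch}, using $b_{\infty} = \infty$), so the variance tends to zero and $\hat{\sigma}^2_{\mbox{\tiny OB-III}}(m_n,b_n) \inP \sigma^2$ follows. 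Finally, since $\sqrt{n}(\hat{\theta}_n - \theta(P)) \inD \sigma Z(0,1)$ by Assumption~\ref{ass:clt} and the denominator converges in probability to $\sigma$, Slutsky's theorem~\eqref{thm:slutsky} gives $T_{\mbox{\tiny OB-III}}(m_n,b_n) \inD Z(0,1)$.

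The crux, and the step I expect to be hardest, is the covariance-kernel convergence $\mathbb{E}[T_{i,m_n}(s)T_{i,m_n}(t)] \to s \wedge t - st$ together with the uniform integrability of the path functional $A_{i,m_n}$. In the OB-I small-batch proof $\mathbb{E}[R_{i,m_n}] \to \sigma^2$ was immediate from the very definition~\eqref{varpardef} of $\sigma^2$, whereas here $A_{i,m_n}$ depends on the entire within-batch trajectory, so one must control $T_{i,m_n}$ at all intermediate times---an essentially functional-CLT-level requirement that is stronger than the plain endpoint CLT of Assumption~\ref{ass:clt}. Establishing it under only Assumptions~\ref{ass:stat}--\ref{ass:unif}, that is, without the strong invariance principle invoked in the large-batch Theorem~\ref{thm:ob3largebatch}, is the delicate part; the weighting condition~\eqref{weightingcond} is exactly the device that converts the limiting kernel into the target constant $\sigma^2$.
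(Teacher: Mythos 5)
Your overall scaffolding is the natural one and matches the template the paper actually uses for the small-batch results it does prove (the truncation of the batch statistics, the variance bound via the mixing coefficients and a Ces\`{a}ro argument as in~\eqref{expslit}--\eqref{varbddep}, then Slutsky's theorem with Assumption~\ref{ass:clt}); note that the paper itself gives no proof of Theorem~\ref{thm:ob3smbatch} (nor of Theorem~\ref{thm:ob3largebatch}), so the only basis for comparison is that analogy, and your plan is indeed the intended route. The uniform integrability of $A_{i,m_n}$ is also salvageable along the lines you sketch, since $\sqrt{A_{i,m_n}}$ is bounded by a weighted average of $\sqrt{j}\,|\hat{\theta}_{i,j}-\theta(P)|$ terms whose $(2+\delta_0)$-moments are uniformly bounded by Assumption~\ref{ass:unif} and stationarity.

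The genuine gap is at the step you yourself identify as the crux, and the argument you offer for it does not close it. In the OB-I small-batch proof, $\mathbb{E}[R_{i,m_n}]\to\sigma^2$ is immediate from Assumption~\ref{ass:unif} because $R_{i,m_n}$ is a function of the single endpoint estimator $\hat{\theta}_{i,m_n}$, which is exactly the object the assumptions control. Here $A_{i,m_n}$ is a functional of the entire within-batch trajectory $j\mapsto\hat{\theta}_{i,j}$, and Assumptions~\ref{ass:stat}--\ref{ass:unif} constrain only the marginal law of $\hat{\theta}_k$ at each single sample size $k$: they say nothing about the joint law of $(\hat{\theta}_{i,j},\hat{\theta}_{i,k})$ for $j<k\leq m_n$, so the covariance kernel $\mathbb{E}[T_{i,m_n}(s)T_{i,m_n}(t)]$ is simply not determined by them, and one can construct estimator sequences satisfying the endpoint CLT and moment convergence for which the kernel does not converge to $s\wedge t-st$. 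Your proposed mechanism --- ``summability of autocovariances implied by the strong mixing of Assumption~\ref{ass:mixing}'' --- fails on two counts: strong mixing with $\alpha_n\searrow 0$ and no rate does not imply summable autocovariances, and, more fundamentally, $\hat{\theta}$ is a general statistical functional rather than a partial sum, so the covariances of $T_{i,m_n}$ at intermediate times are not expressible through autocovariances of the data $X_j$ at all. What is actually needed is a within-batch functional weak convergence statement (as the heuristic below~\eqref{areaest} acknowledges, ``modulo some regularity conditions''), e.g., a functional CLT or the strong invariance of Assumption~\ref{ass:stronginvar} restricted to batches, together with joint uniform integrability, to get $A_{i,m_n}\inD\sigma^2\left(\int_0^1 f(t)B_0(t)\,dt\right)^2$ and hence $\mathbb{E}[A_{i,m_n}]\to\sigma^2$ via~\eqref{weightingcond}. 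Until that ingredient is supplied (or added to the hypotheses), the consistency claim $\hat{\sigma}^2_{\mbox{\tiny OB-III}}(m_n,b_n)\inP\sigma^2$ --- and therefore the Slutsky step --- is not established by your argument.
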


\section{Considerations During Implementation}\label{sec:postscript}
In this section, we discuss ``practitioner'' questions that seem to arise repeatedly. 
\subsection{OB Critical Values versus Gaussian or Student's $t$ Critical Values.} In the absence of the OB-I and OB-II critical value tables on page 17 and page 24 respectively, it has been customary to use critical values from the $z$-table or the Student's $t$ table with an appropriate number of degrees of freedom. From a practical standpoint, how much difference does it make if one uses the $z$-table or the Student's $t$ table versus the OB critical value table? 

When the batch size is large, that is, if $\beta := \lim_n m_n/n >0$, and when the limiting number of batches $b_{\infty} = \infty$, the OB-I and OB-II critical values correspond to the rightmost columns of the tables appearing on page 17 and page 24, respectively. Looking at these columns, it should be immediately clear that the OB-I, OB-II critical values can be quite different from those of the standard normal distribution. For instance, when $\beta = 0.1$, the $0.95$-quantile of the OB-I and OB-II distributions are each around $1.76$ whereas the corresponding standard normal quantile $\Phi^{-1}(0.95) = 1.645$, a difference of more than $7 \%$. This difference increases as $\beta$ increases, and vanishes as $\beta \to 0$. 

When $\beta := \lim_n m_n/n >0$ but the limiting number of batches $b_{\infty} < \infty$, the natural temptation, in absence of the OB-I and OB-II distributions, might be to use the Student's $t$ critical value with $b_{\infty}-1$ degrees of freedom. (Some algebra reveals that when $\beta>0$, $b_{\infty} < \beta^{-1}$ results in non-overlapping batches and $b_{\infty} \geq \beta^{-1}$ results in overlapping batches.) However, notice again the quantiles reported on pages 17 and 24 can be quite different from the corresponding Student's $t$ critical value with $b_{\infty}-1$ degrees of freedom. For instance, when $\beta = 0.2$ and $b_{\infty}=51$, the $0.95$-quantile for the OB-I and OB-II distributions are $1.893$ and $1.902$ respectively, whereas the $0.95$-quantile of the Student's $t$ distribution with $50$ degrees of freedom is $1.6749$, a difference of more than $11 \%$. As $\beta \to 0$ and assuming $b_{\infty}< \infty$, the quantiles of the OB-II distribution converge to those of the Student's $t$ distribution with $b_{\infty}-1$ degrees of freedom; the difference between the quantiles of the $T_{\mbox{\tiny OB-I}}$ distribution and those of the Student's $t$ distribution with $b_{\infty}-1$ degrees of freedom persist even as $\beta \to 0$. 

In summary, substituting the normal or Student's $t$ critical value for the OB critical values will not provide the correct coverage unless $\beta =0.$ And, the deviation from the nominal coverage with such substitution can become substantial as the asymptotic batch size $\beta$ becomes large.

\subsection{Which OB CIP?}
We've presented three statistics along with their weak convergence limits OB-x, x=I,II,III, amounting to three possible CIPs. Numerical evidence to be provided in the ensuing section suggests that using these CIPs with large overlapping batches tends to result in confidence intervals having good behavior across a variety of contexts. How do the OB CIPs compare against each other?

Unfortunately, providing a satisfactory answer appears to be context-dependent and requires much further investigation, especially around the question of batch size choice. The sectioning estimator $\hat{\theta}_n$ used within the OB-I CIP typically has variance $O(\frac{1}{n})$ and bias $O(\frac{1}{n^{\lambda}})$ for some $\lambda \geq 1/2$, whereas the batching estimator $\bar{\theta}_n$ used within OB-II has typical variance $O(\frac{1}{b_nm_n})$ and bias $O(\frac{1}{m_n^{\lambda}})$. These expressions reveal that the batching estimator has lower variance (when using overlapping batches) and higher bias than the sectioning estimator; how these collude to decide the quality of the resulting confidence intervals is a context-dependent question. 

In summary, from the standpoint of interval quality as assessed by coverage probability and expected half-width, little is known theoretically on the relative behavior of OB-x, x=I,II,III especially when implemented with their corresponding optimal batch sizes. This should form the agenda for future investigation.

\begin{table}
\centering
\caption{Time complexities of the three OB CIPs. Recall that $n$ represents the size of the dataset, $m_n$ represents the batch size, $b_n$ represents the number of batches, and $\tau(i)$ is the time complexity of constructing the estimator of the statistical functional $\theta(P)$ using a batch of size $i$.   }
\begin{tabular}{l l}
\toprule
CIP & Time Complexity \\
\midrule
OB-I & $O(\tau(n) + b_n \tau(m_n))$ \\ \hline 
OB-II & $O(b_n \tau(m_n))$ \\ \hline
OB-III & $O\left(\tau(n) + b_n \sum_{i=2}^{m_n} \tau(i)\right) $ \\ \bottomrule
\end{tabular}\label{table:complexity}
\end{table}

The difference between the proposed procedures is much clearer from the standpoint of computational complexity. Suppose $\tau(|u-\ell|)$ is the time complexity of calculating the estimator $\hat{\theta}(\{X_j, \ell \leq j \leq u\})$ described in Section~\ref{sec:ptest}. Then, as can be seen in Table~\ref{table:complexity}, simple calculations reveal that OB-II CIP is the most computationally efficient and the OB-III CIP the least computationally efficient. The relative complexities of the three CIPs become stark when using large batches with significant overlap, that is, when $m_n/n \to \beta >0$ and $d_n = O(1)$. This leads to $O(n\tau(n))$ complexity for OB-I and OB-II, but $O(n\sum_{i=2}^{m_n} \tau(i))$ complexity for OB-III. With sparse overlap resulting in finite number of asymptotic batches, that is, if $b_{\infty} < \infty$, OB-I has complexity $O(\tau(n))$, OB-II has complexity $O(\tau(m_n))$, and OB-III has complexity $O(\tau(n) + \sum_{i=2}^{m_n} \tau(i) ).$  

\rp{An important qualification to the above discussion is that, depending on the specific context, the complexities listed in Table~\ref{table:complexity} can be conservative and should only be used as broad guidance. Specifically, in the sequential context where the data are revealed one (or a few) at a time, instead of all at once, the estimators $\hat{\theta}_n$ and $\hat{\theta}_{i,m_n}$ can often be constructed sequentially and in a way where the resulting complexities are much better than the ``one shot'' complexities listed in Table~\ref{table:complexity}. Nevertheless, we expect OB-III to be the most computationally expensive, and OB-II to be the least computationally expensive.}

\section{Numerical Illustration}\label{sec:numerical}

We now present numerical results from three popular contexts to gain further insight on the behavior of confidence intervals produced by OB-I, OB-II, and subsampling. 
\subsection{Example 1 : CVaR Estimation.} Let $\theta_{\gamma}$ the CVaR associated with the standard normal random variable. From the definition of CVaR~\cite{2008sarserury}, we have $$\theta_{\gamma} := \frac{1}{1-\gamma} \int_{q_{\gamma}}^{\infty} z \, \phi(z) \, dz; \quad q_{\gamma} := \Phi^{-1}(\gamma),$$ where $\phi(\cdot), \Phi(\cdot)$ are the standard normal density and cdf, respectively. With observations from an iid sequence $\{Z_n, n \geq 1\}$ of standard normal random variables, we can construct a point estimator for $\theta_{\gamma}$ as follows: $$\hat{\theta}(\{X_j, \ell \leq j \leq u\}) := \frac{1}{1-\gamma} \sum_{j=\ell}^u Z_j \, \mathbb{I}_{[q_{\gamma},\infty)}(Z_j).$$ We wish to construct a $0.95$-confidence interval on $\theta_{\gamma}$ for $\gamma=0.7,0.9,0.95$ with number of observations $n=100, 500, 1000, 2000, 3000$ and $5000$.

Tables~\ref{table:cvar70}--\ref{table:cvar95} display the estimated coverage probability (along with the estimated expected half-width in parenthesis) of confidence intervals constructed using fully-overlapping OB-I, OB-II CIPs having asymptotic batch size $\beta=0, 0.1, 0.25$, and using subsampling with the recommended~\cite{1999polromwol} sample size $m_n = \sqrt{n}.$ The coverage was estimated with a large ($m=100000$) number of replications.

\begin{table}\label{coverageprob1} \caption{The table summarizes coverage probabilities obtained using small batch OB-I ($\beta=0$), large batch OB-I ($\beta=0.1, 0.25$), small batch OB-II ($\beta=0$), large batch OB-II ($\beta=0.1, 0.25$), and subsampling (SS) for the   CVaR problem with $\gamma = 0.7$. The numbers in parenthesis are estimated expected half-widths of the confidence intervals.}
\centering
\begin{tabular}{|l|l|l|l|}
\hline  & OB-I ($\beta=0, 0.1, 0.25$) & OB-II ($\beta=0, 0.1, 0.25$) & SS ($m_n = \sqrt{n}$)\\
\hline $n = 100$ & \tabincell{l}{$\ \, 0.390 \quad 0.402 \quad  0.947$ \\ $(0.184) \, (0.212) \, (0.233)$} &\tabincell{l}{$\ \, 0.382 \quad 0.397 \quad  0.935$ \\ $(0.182) \, (0.213) \, (0.236)$}  & \tabincell{l}{$\ \, 0.388 $\\ $(0.205)$}  \\
\hline $n = 500$ & \tabincell{l}{$\ \, 0.897 \quad 0.954 \quad  0.950$ \\ $(0.085) \, (0.091) \, (0.099)$} &\tabincell{l}{$\ \, 0.887 \quad 0.944 \quad  0.935$ \\ $(0.085) \, (0.091) \, (0.099)$}  & \tabincell{l}{$\ \, 0.907 $\\ $(0.097)$}  \\
\hline $n = 1000$ & \tabincell{l}{$\ \, 0.946 \quad 0.952 \quad  0.951$ \\ $(0.060) \, (0.063) \, (0.070)$} &\tabincell{l}{$\ \, 0.938 \quad 0.944 \quad  0.938$ \\ $(0.060) \, (0.063) \, (0.071)$}  & \tabincell{l}{$\ \, 0.956 $\\ $(0.066)$}  \\
\hline $n = 2000$ & \tabincell{l}{$\ \, 0.950 \quad 0.952 \quad  0.951$ \\ $(0.042) \, (0.044) \, (0.049)$} &\tabincell{l}{$\ \, 0.943 \quad 0.945 \quad  0.936$ \\ $(0.042) \, (0.045) \, (0.050)$}  & \tabincell{l}{$\ \, 0.961 $\\ $(0.045)$}  \\
\hline $n = 3000$ & \tabincell{l}{$\ \, 0.949 \quad 0.950 \quad  0.950$ \\ $(0.034) \, (0.036) \, (0.040)$} &\tabincell{l}{$\ \, 0.945 \quad 0.944 \quad  0.937$ \\ $(0.034) \, (0.036) \, (0.040)$}  & \tabincell{l}{$\ \, 0.961 $\\ $(0.037)$}  \\
\hline $n = 5000$ & \tabincell{l}{$\ \, 0.950 \quad 0.952 \quad  0.954$ \\ $(0.026) \, (0.028) \, (0.032)$} &\tabincell{l}{$\ \, 0.947 \quad 0.943 \quad  0.936$ \\ $(0.026) \, (0.028) \, (0.031)$}  & \tabincell{l}{$\ \, 0.961 $\\ $(0.028)$}  \\
\hline
\end{tabular}\label{table:cvar70}
\end{table} 

\begin{table}\label{coverageprob2} \caption{The table summarizes coverage probabilities obtained using small batch OB-I ($\beta=0$), large batch OB-I ($\beta=0.1, 0.25$), small batch OB-II ($\beta=0$), large batch OB-II ($\beta=0.1,0.25$), and subsampling (SS) for the CVaR problem with $\gamma = 0.9$. The numbers in parenthesis are estimated expected half-widths of the confidence intervals.}
\centering
\begin{tabular}{|l|l|l|l|}
\hline  & OB-I ($\beta=0, 0.1, 0.25$) & OB-II ($\beta=0, 0.1, 0.25$) & SS ($m_n = \sqrt{n}$)\\
\hline $n = 100$ & \tabincell{l}{$\ \, 0.000 \quad 0.000 \quad  0.444$ \\ $(0.196) \, (0.227) \, (0.311)$} &\tabincell{l}{$\ \, 0.000 \quad 0.000 \quad  0.437$ \\ $(0.195) \, (0.228) \, (0.314)$}  & \tabincell{l}{$\ \, 0.000 $\\ $(0.233)$}  \\
\hline $n = 500$ & \tabincell{l}{$\ \, 0.001 \quad 0.755 \quad  0.949$ \\ $(0.110) \, (0.135) \, (0.142)$} &\tabincell{l}{$\ \, 0.001 \quad 0.744 \quad  0.932$ \\ $(0.110) \, (0.134) \, (0.141)$}  & \tabincell{l}{$\ \, 0.001 $\\ $(0.142)$}  \\
\hline $n = 1000$ & \tabincell{l}{$\ \, 0.014 \quad 0.953 \quad  0.950$ \\ $(0.084) \, (0.091) \, (0.098)$} &\tabincell{l}{$\ \, 0.014 \quad 0.944 \quad  0.937$ \\ $(0.084) \, (0.091) \, (0.100)$}  & \tabincell{l}{$\ \, 0.014 $\\ $(0.101)$}  \\
\hline $n = 2000$ & \tabincell{l}{$\ \, 0.134 \quad 0.952 \quad  0.950$ \\ $(0.062) \, (0.063) \, (0.069)$} &\tabincell{l}{$\ \, 0.132 \quad 0.945 \quad  0.935$ \\ $(0.062) \, (0.063) \, (0.069)$}  & \tabincell{l}{$\ \, 0.136 $\\ $(0.073)$}  \\
\hline $n = 3000$ & \tabincell{l}{$\ \, 0.350 \quad 0.951 \quad  0.948$ \\ $(0.050) \, (0.050) \, (0.055)$} &\tabincell{l}{$\ \, 0.345 \quad 0.944 \quad  0.936$ \\ $(0.050) \, (0.051) \, (0.056)$}  & \tabincell{l}{$\ \, 0.356 $\\ $(0.058)$}  \\
\hline $n = 5000$ & \tabincell{l}{$\ \, 0.705 \quad 0.952 \quad  0.952$ \\ $(0.039) \, (0.039) \, (0.044)$} &\tabincell{l}{$\ \, 0.698 \quad 0.943 \quad  0.935$ \\ $(0.039) \, (0.039) \, (0.043)$}  & \tabincell{l}{$\ \, 0.717 $\\ $(0.044)$}  \\
\hline
\end{tabular}\label{table:cvar90}
\end{table}

\begin{table}\label{coverageprob3} \caption{The table summarizes coverage probabilities obtained using small batch OB-I ($\beta=0$), large batch OB-I ($\beta=0.1,0.25$), small batch OB-II ($\beta=0$), large batch OB-II ($\beta=0.1,0.25$), and subsampling (SS) for the CVaR problem with $\gamma = 0.95$. The numbers in parenthesis are estimated expected half-widths of the confidence intervals.}
\centering
\begin{tabular}{|l|l|l|l|}
\hline  & OB-I ($\beta=0, 0.1, 0.25$) & OB-II ($\beta=0, 0.1, 0.25$) & SS ($m_n = \sqrt{n}$)\\
\hline $n = 100$ & \tabincell{l}{$\ \, \text{NA} \quad \text{NA} \quad  0.082$ \\ $(\text{NA}) \, (\text{NA}) \, (0.322)$} &\tabincell{l}{$\ \, \text{NA} \quad \text{NA} \quad  0.080$ \\ $(\text{NA}) \, (\text{NA}) \, (0.327)$}  & \tabincell{l}{$\ \, \text{NA} $\\ $(\text{NA})$}  \\
\hline $n = 500$ & \tabincell{l}{$\ \, \text{NA} \quad 0.096 \quad  0.915$ \\ $(\text{NA}) \, (0.161) \, (0.189)$} &\tabincell{l}{$\ \, \text{NA} \quad 0.094 \quad  0.899$ \\ $(\text{NA}) \, (0.160) \, (0.188)$}  & \tabincell{l}{$\ \, \text{NA} $\\ $(\text{NA})$}  \\
\hline $n = 1000$ & \tabincell{l}{$\ \, \text{NA} \quad 0.725 \quad  0.948$ \\ $(\text{NA}) \, (0.121) \, (0.129)$} &\tabincell{l}{$\ \, \text{NA} \quad 0.715 \quad  0.936$ \\ $(\text{NA}) \, (0.121) \, (0.130)$}  & \tabincell{l}{$\ \, \text{NA} $\\ $(\text{NA})$}  \\
\hline $n = 2000$ & \tabincell{l}{$\ \, \text{NA} \quad 0.953 \quad  0.951$ \\ $(\text{NA}) \, (0.083) \, (0.089)$} &\tabincell{l}{$\ \, \text{NA} \quad 0.943 \quad  0.935$ \\ $(\text{NA}) \, (0.083) \, (0.089)$}  & \tabincell{l}{$\ \, \text{NA} $\\ $(\text{NA})$}  \\
\hline $n = 3000$ & \tabincell{l}{$\ \, 0.000 \quad 0.952 \quad  0.948$ \\ $(0.061) \, (0.066) \, (0.071)$} &\tabincell{l}{$\ \, 0.000 \quad 0.943 \quad  0.934$ \\ $(0.061) \, (0.066) \, (0.072)$}  & \tabincell{l}{$\ \, 0.000 $\\ $(0.069)$}  \\
\hline $n = 5000$ & \tabincell{l}{$\ \, 0.000 \quad 0.953 \quad  0.952$ \\ $(0.050) \, (0.051) \, (0.056)$} &\tabincell{l}{$\ \, 0.000 \quad 0.943 \quad  0.934$ \\ $(0.050) \, (0.050) \, (0.056)$}  & \tabincell{l}{$\ \, 0.000 $\\ $(0.059)$}  \\
\hline
\end{tabular}\label{table:cvar95}
\end{table}

Tables~\ref{table:cvar70}--\ref{table:cvar95} display clear trends that will be repeated, more or less, across the different experiments we present. All methods seem to tend to the nominal coverage as the available data increases. However, OB-I and OB-II with $\beta >0$ seem to get to the nominal coverage much faster than the rest. For example, in Table~\ref{table:cvar70}, OB-I and OB-II with $\beta = 0.25$ seem to get to the vicinity of the nominal coverage after only about $n=100$ observations; and OB-I and OB-II with $\beta = 0.1$ seem to get to the vicinity of the nominal coverage after about $n=500$ observations. Similarly, in Table~\ref{table:cvar95}, OB-I and OB-II with $\beta = 0.25$ seem to get to the vicinity of the nominal coverage after  about $n=1000$ observations, while for $\beta=0.1$, the corresponding number is $n=2000$. The performance of OB confidence intervals with small batches seems comparable to that of subsampling; both OB-x with $\beta=0$ and subsampling seem to struggle on the CVaR problem with $\gamma =0.95$.  

 \subsection{Example 2: Parameter Estimation for AR($1$).} Consider the AR($1$) process given by \begin{align}
    X_t = c + \phi X_{t-1} + \epsilon_t, \quad \epsilon_t \stackrel{\scriptsize \textrm{iid}}{\sim} N(0, \sigma_{\epsilon}^2), \quad t = 1,2,\ldots \nonumber
\end{align} 

\begin{table}\label{coverageprob4} \caption{The table summarizes coverage probabilities obtained using small batch OB-I ($\beta=0$), large batch OB-I ($\beta=0.25$), small batch OB-II ($\beta=0$), large batch OB-II ($\beta=0.25$), and subsampling (SS) for the AR(1) problem with $\phi = 0.5, c = 0, \sigma_{\epsilon}^2 = 1$. The numbers in parenthesis are estimated expected half-widths of the confidence intervals.}
\centering
\begin{tabular}{|l|l|l|l|}
\hline  & OB-I ($\beta=0, 0.1, 0.25$) & OB-II ($\beta=0, 0.1, 0.25$) & SS ($m_n = \sqrt{n}$)\\
\hline $n = 100$ & \tabincell{l}{$\ \, 0.898 \quad 0.931 \quad 0.932$ \\ $(0.191) \, (0.222) \, (0.245)$} &\tabincell{l}{$\ \, 0.321 \quad 0.420 \quad  0.833$ \\ $(0.154) \, (0.180) \, (0.230)$}  & \tabincell{l}{$\ \, 0.644 $\\ $(0.110)$}  \\
\hline $n = 500$ & \tabincell{l}{$\ \, 0.931 \quad 0.947 \quad 0.944$ \\ $(0.089) \, (0.100) \, (0.110)$} &\tabincell{l}{$\ \, 0.366 \quad 0.821 \quad 0.908$ \\ $(0.081) \, (0.095) \, (0.109)$}  & \tabincell{l}{$\ \, 0.831 $\\ $(0.071)$}  \\
\hline $n = 1000$ & \tabincell{l}{$\ \, 0.940 \quad 0.949 \quad 0.946$ \\ $(0.064) \, (0.070) \, (0.078)$} &\tabincell{l}{$\ \, 0.376 \quad 0.883 \quad  0.924$ \\ $(0.059) \, (0.069) \, (0.079)$}  & \tabincell{l}{$\ \, 0.866 $\\ $(0.053)$}  \\
\hline $n = 5000$ & \tabincell{l}{$\ \, 0.946 \quad 0.951 \quad  0.948$ \\ $(0.029) \, (0.031) \, (0.035)$} &\tabincell{l}{$\ \, 0.419 \quad 0.933 \quad  0.931$ \\ $(0.028) \, (0.031) \, (0.035)$}  & \tabincell{l}{$\ \, 0.912 $\\ $(0.026)$}  \\
\hline $n = 10000$ & \tabincell{l}{$\ \, 0.949 \quad 0.947 \quad  0.950$ \\ $(0.021) \, (0.022) \, (0.025)$} &\tabincell{l}{$\ \, 0.425 \quad 0.935 \quad  0.936$ \\ $(0.020) \, (0.022) \, (0.025)$}  & \tabincell{l}{$\ \, 0.925 $\\ $(0.019)$}  \\
\hline
\end{tabular}\label{table:ar1phi5}
\end{table}

\begin{table}\label{coverageprob6} \caption{The table summarizes coverage probabilities obtained using small batch OB-I ($\beta=0$), large batch OB-I ($\beta=0.25$), small batch OB-II ($\beta=0$), large batch OB-II ($\beta=0.25$), and subsampling (SS) for the AR(1) problem with $\phi = 0.9, c = 0, \sigma_{\epsilon}^2 = 1$. The numbers in parenthesis are estimated expected half-widths of the confidence intervals.}
\centering
\begin{tabular}{|l|l|l|l|}
\hline  & OB-I ($\beta=0, 0.1, 0.25$) & OB-II ($\beta=0, 0.1, 0.25$) & SS ($m_n = \sqrt{n}$)\\
\hline $n = 100$ & \tabincell{l}{$\ \, 0.698 \quad 0.745 \quad 0.799$ \\ $(0.630) \, (0.726) \, (0.925)$} &\tabincell{l}{$\ \, \text{NA} \quad \text{NA} \quad  0.332$ \\ $(0.246) \, (0.288) \, (0.593)$}  & \tabincell{l}{$\ \, 0.045 $\\ $(0.005)$}  \\
\hline $n = 500$ & \tabincell{l}{$\ \, 0.859 \quad 0.901 \quad 0.912$ \\ $(0.365) \, (0.437) \, (0.505)$} &\tabincell{l}{$\ \, \text{NA} \quad 0.260 \quad 0.780$ \\ $(0.195) \, (0.323) \, (0.452)$}  & \tabincell{l}{$\ \, 0.282 $\\ $(0.086)$}  \\
\hline $n = 1000$ & \tabincell{l}{$\ \, 0.900 \quad 0.929 \quad 0.934$ \\ $(0.272) \, (0.320) \, (0.365)$} &\tabincell{l}{$\ \, \text{NA} \quad 0.546 \quad  0.859$ \\ $(0.169) \, (0.276) \, (0.350)$}  & \tabincell{l}{$\ \, 0.438 $\\ $(0.096)$}  \\
\hline $n = 5000$ & \tabincell{l}{$\ \, 0.924 \quad 0.942 \quad 0.945$ \\ $(0.131) \, (0.148) \, (0.165)$} &\tabincell{l}{$\ \, \text{NA} \quad 0.861 \quad  0.919$ \\ $(0.105) \, (0.144) \, (0.164)$}  & \tabincell{l}{$\ \, 0.743 $\\ $(0.083)$}  \\
\hline $n = 10000$ & \tabincell{l}{$\ \, 0.935 \quad 0.945 \quad 0.946$ \\ $(0.094) \, (0.104) \, (0.116)$} &\tabincell{l}{$\ \, 0.000 \quad 0.900 \quad  0.923$ \\ $(0.081) \, (0.103) \, (0.117)$}  & \tabincell{l}{$\ \, 0.816 $\\ $(0.068)$}  \\
\hline
\end{tabular}\label{table:ar1phi9}
\end{table}

With observations from the time series $\{X_n, n \geq 1\}$, the least-squares point estimator for $\theta(P) := \phi$ (after fixing $c=0$) is \begin{align} \hat{\theta}(\{X_j, \ell \leq j \leq u\}) := \underset{\phi \in \mathbb{R}}{\arg\min} \sum_{j=\ell}^{u-1} (X_{j+1} - \phi X_j)^2.\end{align} We wish to construct a $0.95$-confidence interval on $\phi=0.5, 0.9$ for $\sigma_{\epsilon}=1$ and with number of observations $n=100, 500, 1000, 5000$ and $10000$.

Tables~\ref{table:ar1phi5}--\ref{table:ar1phi9} are in the same format as Tables~\ref{table:cvar70}--\ref{table:cvar95} and display the results for the AR(1) example. The trends in coverage probabilities appear to be similar to those observed in Example 1, with large batches playing a seemingly important role in ensuring close to nominal coverage. Interestingly, Example 2 seems to do a better job in distinguishing between OB-I and OB-II for the same $\beta$, and in distinguishing between OB methods and subsampling. For example, due to the increased estimator bias associated with $\phi=0.9$, OB-I with $\beta=0,0.1,0.25$ appear to dominate OB-II with corresponding $\beta=0,0.1,0.25$. Subsampling clearly generates intervals with smaller expected half-width but the coverage is substantially lower than nominal especially when $\phi=0.9$. Such differences were not as evident in Example 1, probably because of the more muted effects of bias.   

\subsection{Example 3: Non-Homogeneous Poisson Process (NHPP) Rate Estimation}

In the final example, we consider a nonhomogeneous Poisson process $\{N(t), t \in [0,1]\}$~\cite{cinlar1} having rate $\lambda(t) = 4 + 8t, t \in [0,1]$. Suppose also that we have iid realizations of the process $\{N(t), t \in [0,1]\}$, using which we wish to construct a $0.95$-confidence interval on $$\theta_t(P):= \lambda(t) \mbox{ for } t=0.25, 0.5, 0.75.$$ We emphasize that this problem constructs ``marginal confidence intervals'' and is different from that of identifying a confidence region on the vector $(\lambda(0.25),\lambda(0.5),\lambda(0.75))$ or on the function $\lambda(t), t \in [0,1]$. The latter two problems, while very useful, lie outside the current paper's scope of real-valued $\theta(P)$. 

Given iid realizations $\{X_{j}(t), t \in [0,1]\}, j=1,2,\ldots,$ of $\{N(t), t \in [0,1]\}$, a simple point estimator for $\lambda(t)$ ($t$ fixed) can be constructed as follows: $$\hat{\theta}_t(\{X_j, \ell \leq j \leq u\}) := \frac{1}{u-\ell +1} \sum_{j=\ell}^u \frac{1}{\delta}\left(X_j(t+\delta) - X_j(t)\right).$$ The realizations $X_j, j=1,2,\ldots$ were generated using Algorithm 6 in~\cite{2011pasb,2011pasa}, the constant $\delta$ was fixed at $10^{-4}$, and the number of observations $n=1000, 2000, 5000, 10000, 20000$ and $50000$. 

Table~\ref{table:nhpp} presents results on coverage probability delivered by OB-I (with $\beta=0.1, 0.25$) and subsampling (with $m_n = \sqrt{n})$ for each of the three ``marginal'' confidence intervals associated with $t=0.25,0.5,0.75$. As in previous examples, the numbers in parenthesis refer to the estimated expected half-width. 

The trends in Table~\ref{table:nhpp} are consistent with those from the previous examples with OB-I delivering confidence intervals that are clearly better in terms of coverage, although nominal coverage seems to need a higher value of $n$ than in previous examples. Subsampling does not reach nominal coverage even with $n=50000$ although the generated intervals have much smaller half-widths.

\rp{\subsection{The Effect of Overlap in Batches}
Towards understanding the effect of the batch offset parameter $d_n$, we conducted additional numerical experiments for the ``more difficult versions'' of the CVaR problem ($\gamma=0.9$) and the AR(1) problem ($\phi=0.9$), with a dataset of size $n=1000$. As can be seen in Table~\ref{table:overlap_cvar} and Table~\ref{table:overlap_ar1}, various values of $d_n$ were chosen, expressed as a function of the batch size $m_n$ or the dataset size $n$.

The trends in Table~\ref{table:overlap_cvar} and Table~\ref{table:overlap_ar1} are interesting, although predictable. Increasing $d_n$ values clearly helps hasten the rate of coverage probability convergence (to nominal). This effect is pronounced in the small batch regime and most muted in the OB-I large batch regime.  Correspondingly, there is also an increase in the expected half-widths, with the small batch regime exhibiting a sharp rise when $d_n$ is very large, or correspondingly, the number of batches very small. Again, this effect is most muted in the OB-I large batch regime.

The main insight is that large $d_n$ values help with estimating the variance constant correctly in the small batch regime, since dependence between batch estimates reduces as $d_n$ increases. However, the price is larger half-widths due to the necessarily smaller number of batches. The large batch regime avoids this problem by modeling the dependence structure between batch estimates.}

\begin{table}\label{coverageprob8} \caption{The table summarizes coverage probabilities obtained using large batch OB-I ($\beta=0.1, 0.25$) and subsampling for the NHPP rate estimation problem with $\lambda(t) = 4 + 8t$. The three numbers in each column indicate the coverage probability estimates corresponding to a confidence interval on $\lambda(t)$ for $t = 0.25, 0.5, 0.75$. The numbers in parenthesis are estimated expected half-widths of the confidence intervals.}
\centering
\begin{tabular}{|l|l|l|}
\hline & OB-I ($\beta=0.1, 0.25$) & Subsampling ($m_n = \sqrt{n})$ \\
\hline $n = 1000$ & \tabincell{l}{$\ \, 0.447, \ \ 0.446; \ \ 0.542, \ \ 0.542; \ \ 0.629, \ \ 0.628$ \\ $(19.913, 78.657); \, (28.100, 106.502); \, (35.863, 135.759)$} & \tabincell{l}{$\ \, 0.315; \ \ 0.330; \ \ 0.353$\\ $(7.671); \, (8.040); \, (8.574)$}  \\
\hline $n = 2000$ & \tabincell{l}{$\ \, 0.694, \ \ 0.692; \ \ 0.793, \ \ 0.790; \ \ 0.863, \ \ 0.859$ \\ $(15.505, 42.562); \, (21.772, 57.227); \, (28.120, 70.158)$} & \tabincell{l}{$\ \, \text{NA}; \ \ 0.252; \ \ 0.453$\\ $(1.900); \, (2.889); \, (3.748)$}  \\
\hline $n = 5000$ & \tabincell{l}{$\ \, 0.937, \ \ 0.939; \ \ 0.912, \ \ 0.969; \ \ 0.962, \ \ 0.976 $ \\ $(11.582, 20.017); \, (15.546, 24.972); \, (19.767, 27.624)$} & \tabincell{l}{$\ \, 0.599; \ \ 0.549; \ \ 0.501$\\ $(2.220); \, (2.763); \, (3.243)$}  \\
\hline $n = 10000$ & \tabincell{l}{$\ \, 0.940, \ \ 0.966; \ \ 0.985, \ \ 0.974; \ \ 0.987, \ \ 0.974$ \\ $(8.686, 11.062); \, (11.210, 12.073); \, (13.189, 12.922)$} & \tabincell{l}{$\ \, 0.459; \ \ 0.628; \ \ 0.579$\\ $(1.765); \, (2.128); \, (2.419)$}  \\ 
\hline $n = 20000$ & \tabincell{l}{$\ \, 0.989, \ \ 0.967; \ \ 0.989, \ \ 0.963; \ \ 0.988, \ \ 0.962$ \\ $(5.917, 5.634); \, (6.910, 6.053); \, (7.730, 6.520)$} & \tabincell{l}{$\ \, 0.534; \ \ 0.620; \ \ 0.574$\\ $(1.358); \, (1.563); \,(1.744)$}  \\
\hline $n = 50000$ & \tabincell{l}{$\ \, 0.978, \ \ 0.957; \ \ 0.973, \ \ 0.959; \ \ 0.970, \ \ 0.955$ \\ $(2.963, 2.845); \, (3.319, 3.266); \, (3.547, 3.537)$} & \tabincell{l}{$\ \, 0.595; \ \ 0.537; \ \ 0.611$\\ $(0.877); \, (1.007); \,(1.220)$}  \\
\hline
\end{tabular}\label{table:nhpp}
\end{table}

\begin{table} \caption{The table summarizes coverage probabilities obtained using small batch OB-I using student's $t$ with $b_n - 1$ degrees of freedom, large batch OB-I ($\beta=0.25$), small batch OB-II using student's t with $b_n - 1$ degrees of freedom, and large batch OB-II ($\beta=0.25$) for the CVaR problem with $\gamma = 0.9, n = 1000, m = 10000$. The numbers in parenthesis are estimated expected half-widths of the confidence intervals.}
\centering
\begin{tabular}{|l|l|l|l|l|}
\hline  & OB-I ($\beta=0, 0.25$) & OB-II ($\beta=0, 0.25$) & $b_n$ ($\beta=0, 0.25$) & $b_{\infty}$ ($\beta=0, 0.25$)\\
\hline $d_n = 1$ & \tabincell{l}{$\ 0.013 \quad 0.950$ \\ $(0.085) \, (0.099)$} &\tabincell{l}{$\ 0.013 \quad 0.937$ \\ $(0.085) \, (0.099)$} & $\ \, 970 \quad 751$ & $\ \, \infty \quad \infty$\\
\hline $d_n = \sqrt{m_n}$ & \tabincell{l}{$\ 0.032 \quad 0.947$ \\ $(0.086) \, (0.098)$} &\tabincell{l}{$\ 0.031 \quad 0.936$ \\ $(0.085) \, (0.100)$}  & $\ \, 194 \quad 51$ & $\ \, \infty \quad \infty$\\
\hline $d_n = \frac{m_n}{4}$ & \tabincell{l}{$\ 0.050 \quad 0.953$ \\ $(0.086) \, (0.101)$} &\tabincell{l}{$\ 0.049 \quad 0.941$ \\ $(0.086) \, (0.101)$} & $\ \, 139 \quad 13$ & $\ \, \infty \quad 13$\\
\hline $d_n = \frac{m_n}{2}$ & \tabincell{l}{$\ 0.110 \quad 0.951$ \\ $(0.090) \, (0.105)$} &\tabincell{l}{$\ 0.109 \quad 0.947$ \\ $(0.090) \, (0.105)$} & $\ \, 65 \quad 7$ & $\ \, \infty \quad 7$\\
\hline $d_n = \frac{3m_n}{4}$ & \tabincell{l}{$\ 0.202 \quad 0.955$ \\ $(0.091) \, (0.107)$} &\tabincell{l}{$\ 0.196 \quad 0.945$ \\ $(0.090) \, (0.110)$}  & $\ \, 43 \quad 5$ & $\ \, \infty \quad 5$\\
\hline $d_n = m_n$ & \tabincell{l}{$\ 0.275 \quad 0.948$ \\ $(0.092) \, (0.121)$} &\tabincell{l}{$\ 0.268 \quad 0.949$ \\ $(0.092) \, (0.122)$}  & $\ \, 32 \quad 4$ & $\ \, \infty \quad 4$\\
\hline $d_n = 0.1 n$ & \tabincell{l}{$\ 0.643 \quad 0.946$ \\ $(0.099) \, (0.101)$} &\tabincell{l}{$\ 0.491 \quad 0.931$ \\ $(0.095) \, (0.101)$}  & $\ \, 10 \quad 8$ & $\ \, 11 \quad 8$\\
\hline $d_n = 0.2 n$ & \tabincell{l}{$\ 0.789 \quad 0.962$ \\ $(0.118) \, (0.125)$} &\tabincell{l}{$\ 0.499 \quad 0.919$ \\ $(0.106) \, (0.115)$} & $\ \, 5 \quad 4$ & $\ \, 6 \quad 4$\\
\hline $d_n = 0.5 n$ & \tabincell{l}{$\ 0.919 \quad 0.949$ \\ $(0.499) \, (0.163)$} &\tabincell{l}{$\ 0.664 \quad 0.927$ \\ $(0.319) \, (0.429)$}  & $\ \, 2 \quad 2$ & $\ \, 3 \quad 2$\\
\hline
\end{tabular}\label{table:overlap_cvar}
\end{table}

\begin{table} \caption{The table summarizes coverage probabilities obtained using small batch OB-I ($\beta=0$), small batch OB-I using student's t with $b_n - 1$ degrees of freedom, large batch OB-I ($\beta=0.25$), small batch OB-II ($\beta=0$), small batch OB-II using student's t with $b_n - 1$ degrees of freedom, and large batch OB-II ($\beta=0.25$) for the AR(1) problem with $\phi = 0.9, c = 0, \sigma_{\epsilon}^2 = 1$, $n = 1000, m = 10000$. The numbers in parenthesis are estimated expected half-widths of the confidence intervals.}
\centering
\begin{tabular}{|l|l|l|l|l|}
\hline  & OB-I ($\beta=0, 0.25$) & OB-II ($\beta=0, 0.25$) & $b_n$ ($\beta=0, 0.25$) & $b_{\infty}$ ($\beta=0, 0.25$)\\
\hline $d_n = 1$ & \tabincell{l}{$\ 0.895 \quad 0.930$ \\ $(0.272) \, (0.366)$} &\tabincell{l}{$\ \text{NA} \quad 0.855$ \\ $(0.169) \, (0.348)$} & $\ \, 970 \quad 751$ & $\ \, \infty \quad \infty$\\
\hline $d_n = \sqrt{m_n}$ & \tabincell{l}{$\ 0.900 \quad 0.936$ \\ $(0.274) \, (0.364)$} &\tabincell{l}{$\ \text{NA} \quad 0.862$ \\ $(0.170) \, (0.348)$}  & $\ \, 194 \quad 51$ & $\ \, \infty \quad \infty$\\
\hline $d_n = \frac{m_n}{4}$ & \tabincell{l}{$\ 0.903 \quad 0.937$ \\ $(0.275) \, (0.372)$} &\tabincell{l}{$\ \text{NA} \quad 0.868$ \\ $ (0.171) \, (0.351)$} & $\ \, 139 \quad 13$ & $\ \, \infty \quad 13$\\
\hline $d_n = \frac{m_n}{2}$ & \tabincell{l}{$\ 0.903 \quad 0.943$ \\ $(0.277) \, (0.392)$} &\tabincell{l}{$\ \text{NA} \quad 0.885$ \\ $(0.172) \, (0.373)$}  & $\ \, 65 \quad 7$ & $\ \, \infty \quad 7$\\
\hline $d_n = \frac{3m_n}{4}$ & \tabincell{l}{$\ 0.908 \quad 0.940$ \\ $(0.280) \, (0.400)$} &\tabincell{l}{$\ \text{NA} \quad 0.879$ \\ $(0.173) \, (0.388)$}  & $\ \, 43 \quad 5$ & $\ \, \infty \quad 5$\\
\hline $d_n = m_n$ & \tabincell{l}{$\ 0.902 \quad 0.947$ \\ $(0.283) \, (0.463)$} &\tabincell{l}{$\ \text{NA} \quad 0.892$ \\ $(0.174) \, (0.430)$}  & $\ \, 32 \quad 4$ & $\ \, \infty \quad 4$\\
\hline $d_n = 0.1n$ & \tabincell{l}{$\ 0.923 \quad 0.937$ \\ $(0.311) \, (0.375)$} &\tabincell{l}{$\ 0.014 \quad 0.861$ \\ $(0.182) \, (0.356)$}  & $\ \, 10 \quad 8$ & $\ \, 11 \quad 8$\\
\hline $d_n = 0.2n$ & \tabincell{l}{$\ 0.956 \quad 0.954$ \\ $(0.379) \, (0.465)$} &\tabincell{l}{$\ 0.068 \quad 0.867$ \\ $(0.202) \, (0.412)$}  & $\ \, 5 \quad 4$ & $\ \, 6 \quad 4$\\
\hline $d_n = 0.5n$ & \tabincell{l}{$\ 0.996 \quad 0.947$ \\ $(1.671) \, (0.611)$} &\tabincell{l}{$\ 0.400 \quad 0.918$ \\ $(0.606) \, (1.514)$}  & $\ \, 2 \quad 2$ & $\ \, 3 \quad 2$\\
\hline
\end{tabular}\label{table:overlap_ar1}
\end{table} 

\section*{Acknowledgments.} This work was greatly influenced by conversations and original ideas of Bruce Schmeiser, especially in viewing the proposed statistics in analogy with the $\chi^2_{\nu}$ and Student's $t$ distributions. Raghu Pasupathy also gratefully acknowledges the Office of Naval Research for support provided through the grants N000141712295 and 13000991.

\bibliographystyle{ACM-Reference-Format}
\bibliography{stochastic_optimization,adaptiveSeqSamp}

\appendix
\rp{\section{Subsampling and Bootstrapping}\label{sec:nonCLT}

In this section, we provide a concise overview of subsampling and bootstrapping. To maintain a clear connection with the topic of this paper, we focus the discussion on contexts that use a Studentized statistic.

\subsection{Subsampling}\label{sec:subsampling} 

\emph{Subsampling} and \emph{the bootstrap} are examples of methods that are not CLT-based methods in the sense that they do not assume knowledge of the normal weak limit in~\eqref{firstclt}, although they assume the existence of a weak limit. Subsampling is the culmination of decades of thought on using batches for confidence intervals, and was formalized in a 1992 paper by Politis and Romano~\cite{1992polrom}. See~\citep{1999polromwol} for a book-length treatment that includes situations where $\theta(P)$ resides in a separable Banach space.  

\begin{remark} There is a long history of using batches within the classical statistics literature in the context of constructing a confidence interval from time series data, e.g., interpenetration samples by Mahalanobis~\cite{1946mah}, the jacknife by Quenouille~\cite{1949que}, pseudoreplication by McCarthy~\cite{1969mcc}, and subsampling by Hartigan~\cite{1969har, 1975har}. There is also a corresponding history in the simulation literature dating back to Conway~\cite{1963con}, Mechanic and McKay~\cite{1966mecmck}, and Fishman~\cite{1979fis} --- precursors to the now mature methods to construct confidence intervals on the steady-state mean using batched simulation output.\end{remark} 

In the interest of easily conveying the essence of subsampling and the bootstrap, the ensuing discussion assumes use of the Studentized statistic $\hat{\sigma}_n^{-1}(\hat{\theta}_n - \theta(P))$, where $\hat{\theta}_n$ is the point estimator of $\theta(P)$, and $\hat{\sigma}^2_n$ is a point estimator of the variance parameter $\sigma^2$. Suppose \begin{align} \tau_n\frac{\left(\hat{\theta}_n - \theta(P)\right)}{\hat{\sigma}_n} \quad \inD \quad  J(P);  \label{subsamplingstat1}\end{align} and define \begin{align} \label{subsamplingstat2} L_n(x) &:= \frac{1}{n-m_n+1} \sum_{j=1}^{n - m_n + 1} \mathbb{I}\left\{\tau_{m_n}\frac{(\hat{\theta}_{j,m_n} - \hat{\theta}_n)}{\hat{\sigma}_{j,m_n}} \leq x\right\}, \quad x \in \mathbb{R}  \nonumber \\ &= \frac{1}{n-m_n+1}\sum_{j=1}^{n-m_n+1} \mathbb{I}\left\{\frac{\tau_{m_n}(\hat{\theta}_{j,m_n} - \theta(P))}{\hat{\sigma}_{j,m_n}} + \frac{\tau_{m_n}(\theta(P) - \hat{\theta}_n)}{\hat{\sigma}_{j,m_n}} \leq x\right\},\end{align} where $\hat{\theta}_n$ is a point estimator of $\theta(P)$ constructed from the entire data set $\left(X_1, X_2, \ldots, X_n\right)$,  $\theta_{j,m_n}$ is the estimator of $\theta(P)$ constructed from the $j$-th subsample $\left(X_{(j-1)m_n + 1}, X_{(j-1)m_n + 2}, \ldots, X_{(j-1)m_n + m_n}\right)$ (see Figure~\ref{batching} with offset $d_n=1$), $\{\tau_n, n \geq 1\}$ is a ``scaling" sequence, and $\hat{\sigma}_n$ is an estimate of what is called the \emph{scale} $\sigma$ in~\cite{1992polrom}, and what we call the \emph{variance constant} in this paper. Also define \begin{equation}\label{def:Un} U_n(x) : = \frac{1}{n-m_n+1}\sum_{j=1}^{n-m_n+1} \mathbb{I}\left\{\frac{\tau_{m_n}(\hat{\theta}_{j,m_n} - \theta(P))}{\hat{\sigma}_{j,m_n}} \leq x\right\}; \quad E_n: =  \left(\frac{\tau_{m_n}(\theta(P) - \hat{\theta}_n)}{\hat{\sigma}_{j,m_n}} \leq x\right).\end{equation} 
Suppose that in addition to the assumption of the weak limit existence in~\eqref{subsamplingstat1}, the following assumptions hold. \begin{enumerate} \item[(A.1)] the sequence $\{X_n, n \geq 1\}$ is stationary and strong-mixing (defined in Section~\ref{sec:mathprelim}); \item[(A.2)] $\tau_{m_n}/\tau_n \to 0$, $m_n/n \to 0$; and \item[(A.3)] the cdf $J_n(\cdot,P)$ of the Studentized statistic $\tau_n\left(\hat{\theta}_n - \theta(P)\right)/\hat{\sigma}_n$ is continuous (in its first argument).\end{enumerate} 

The crucial insight of subsampling is that the empirical cdf $L_n$ can be used to approximate the sampling distribution of the Studentized statistic by replacing $\hat{\theta}_n$ and $\hat{\sigma}_{n}$ in~\eqref{subsamplingstat1} by their subsample counterparts $\hat{\theta}_{j,m_n}, \hat{\sigma}_{j,m_n}$, and by replacing $\theta(P)$ in~\eqref{subsamplingstat1} by $\hat{\theta}_n$. In~\cite{1992polrom}, this is formalized by demonstrating the intuitive result \begin{equation}\label{subsamplingmainresult}  \sup_{x \in \mathbb{R}} |L_n(x) - J_n(x,P)| \inP 0.\end{equation} 

The assertion in~\eqref{subsamplingmainresult} motivates constructing the following (two-sided) subsampling confidence interval $I_{n,\alpha}$ on $\theta(P)$, that can be shown to be asymptotically valid: \begin{equation}\label{subsamplingconf} I_{n,\alpha} : = (\hat{\theta}_n - c_{n,\alpha/2}\frac{\hat{\sigma}_n}{\tau_n}, \hat{\theta}_n + c_{n,1-\alpha/2}\frac{\hat{\sigma}_n}{\tau_n}); \quad c_{n,q} := \inf\{x: L_n(x) \geq q\}.\end{equation}

\begin{remark} Notice from~\eqref{subsamplingconf} that subsampling assumes knowledge of the scaling $\tau_n$ but not the weak limit $J$. This will be true about the bootstrap as well.  \end{remark}

We emphasize that main instrument in~\cite{1992polrom} for establishing that $L_n(x) \inP J(x,P)$ is the ``small batch size" assumption in (A.2) above, which ensures that the probability of the event $E_n$ in~\eqref{def:Un} tends to one, and the variance of $U_n(x)$ in~\eqref{def:Un} tends to zero.

\subsection{The Bootstrap}
In the service of precisely explaining the bootstrap~\cite{1979efr,1992hall,1998efrtib,1997davhin}, let's enhance the notation introduced previously to view point estimators as functionals, that is, $\hat{\theta}: \mathcal{D}_n \to \mathbb{R}$ and $\hat{\sigma}^2: \mathcal{D}_n \to \mathbb{R}$, where $\mathcal{D}_n$ is the space of datasets of size $n$ with $S$-valued observations. So, the point estimator $\hat{\theta}_n$ of the statistical functional $\theta(P)$ and the point estimator $\hat{\sigma}^2_n$ of the variance constant $\sigma^2$ constructed using the given dataset $X_1, X_2, \ldots, X_n$ are $$\hat{\theta}_n \equiv \hat{\theta}(\{X_1,X_2, \ldots,X_n\}); \quad \hat{\sigma}^2_n \equiv \hat{\sigma}^2_n(\{X_1,X_2, \ldots,X_n\}).$$ 

The bootstrap's central idea is a method for approximating the sampling distribution of the Studentized statistic $\sqrt{n}\left(\hat{\theta}_n - \theta(P)\right)/\hat{\sigma}_n$. (The bootstrap assumes that the weak limit in~\eqref{subsamplingstat1} holds with $\tau_n = \sqrt{n}$, although the assumption of the existence of a weak limit is often not stated explicitly.) And, whereas subsampling constructs the empirical cdf $L_n$ in~\eqref{subsamplingstat2} using subsamples, the bootstrap accomplishes the objective of estimating the sampling distribution of $\sqrt{n}\left(\hat{\theta}_n - \theta(P)\right)/\hat{\sigma}_n$ through the following two steps: \begin{enumerate} \item \emph{resample}, that is, use a ``resampling measure'' $\hat{P}_n$ to generate $B$ datasets $\{X_{j,i}^*, 1 \leq j \leq n\}, i=1,2,\ldots,B$, e.g., iid draws with replacement from the original dataset; and \item \emph{compute}, that is, use the generated datasets $\{X_{j,i}^*, 1 \leq j \leq n\}, i=1,2,\ldots,B$ to compute ``bootstrap realizations'' $\hat{\theta}(\{X_{j,i}^*, 1 \leq j \leq n\}), i=1,2,\ldots,B$ of the point estimator and ``bootstrap realizations'' $$S_{n,i} := \sqrt{n}\frac{\left(\hat{\theta}(\{X_{j,i}^*, 1 \leq j \leq n\}) - \hat{\theta}_n\right)}{\hat{\sigma}_n}, i = 1,2,\ldots,B$$ of the Studentized statistic.\end{enumerate} (It is important that the above steps are for the context of bootstrapping with iid data; in the context of a time series, a modification such as the \emph{moving blocks bootstrap}~\cite[pp. 101]{1998efrtib} is needed.)

The bootstrap then uses observations $S_{n,i}, i =1,2,\ldots,B$ to compute the empirical cdf $\hat{J}_n(\cdot,\hat{P}_n)$ used to approximate the sampling distribution function $J_n(\cdot,P)$ of $\sqrt{n}\left(\hat{\theta}_n - \theta(P)\right)/\hat{\sigma}_n$, yielding the following two-sided $(1-\alpha)$ confidence interval on $\theta(P)$:
\begin{equation}\label{bootstrapconf} (\hat{\theta}_n - \hat{J}_n^{-1}(\alpha/2), \hat{\theta}_n + \hat{J}_n^{-1}(1-\alpha/2)).\end{equation} 

As can be observed in the statement and proof of the bootstrap's main theorem~\cite[Theorem 1.2.1]{1999polromwol}, consistency follows upon assuming that the resampling and compute steps above are such that (i) the resulting approximation $\hat{J}_n$ in a sense consistently approximates $J_n(\cdot,P)$, e.g., $\rho_L(J_n(\cdot,P),\hat{J}_n(\cdot,\hat{P}_n)) \as 0$ as $n \to \infty$, where $\rho_L$ is the L\'{e}vy metric~\cite[Section 15.1]{das2011}; and (ii) the distribution $J(\cdot,P)$ of the weak limit $J(P)$ in~\eqref{subsamplingstat1} is continuous and strictly increasing at $\inf\{x: J(x,P) \geq 1-\alpha\}$.

Since its original introduction in 1979~\cite{1992efr}, the bootstrap has received tremendous attention due to its simplicity and wide applicability, resulting in popular refinements~\cite{1981efr,1982efr,1985efr,1987efr}, the ability to handle time series~\cite{1998efrtib,1990bos}, extensions to the functional context~\cite{1988dicrom},  higher-order corrections\cite{1987loh,1992hall} to improve coverage accuracy, and most recently a computationally ``cheap'' version~\cite{2022lam}. Debates on whether subsampling or the bootstrap is better have continued, but it is now known that subsampling is more general in that the bootstrap requires the behavior of the bootstrap distribution $\hat{J}_n(\cdot,\hat{P}_n)$ to be smooth (around $P$) when seen as a function of its second argument. We go into no further detail on this point but see~\cite[Section 2.3]{1999polromwol}. Also see~\cite{1989ginzin} for an interesting theorem on the sense in which the bootstrap in its basic form is not valid if the variance parameter does not exist.}

\section{Some Useful Results}\label{sec:useful}
We will invoke the following useful result from~\cite{2010dur} that provides a weak law for triangular arrays of real-valued random variables that are not necessarily identically distributed. 

\begin{theorem}[Weak law for triangular arrays, Theorem 2.2.6,~\cite{2010dur}]\label{thm:weaklawtarrays} For each $n$, let $X_{n,k}, 1 \leq k \leq n$ be independent. Let $t_n > 0$ with $t_n \to \infty$ and let $\bar{X}_{n,k} = X_{n,k}\mathbbm{1}(|X_{n,k}| \leq t_n)$. Suppose that as $n \to \infty$, \begin{enumerate}\item $\sum_{k=1}^n P(|X_{n,k}| > t_n) \to 0$; and \item $t_n^{-2}\sum_{k=1}^n \mathbb{E}[\bar{X}_{n,k}^2] \to 0$.\end{enumerate} If we let $S_n = X_{n,1} + X_{n,2} + \cdots + X_{n,n}$ and put $\mu_n = \sum_{k=1}^n \mathbb{E}[\bar{X}_{n,k}]$, then $t_n^{-1} (S_n - \mu_n) \inP 0$.
\end{theorem}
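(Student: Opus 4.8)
The plan is to run the standard truncation argument, decoupling the ``tail'' contribution controlled by hypothesis (1) from the ``bulk'' contribution controlled by hypothesis (2). First I would introduce the truncated partial sum $\bar{S}_n := \sum_{k=1}^n \bar{X}_{n,k}$ and note that, since $\mu_n = \mathbb{E}[\bar{S}_n]$, the target $t_n^{-1}(S_n - \mu_n)$ differs from $t_n^{-1}(\bar{S}_n - \mu_n)$ only on the event $\{S_n \neq \bar{S}_n\}$. Hence it suffices to establish two things: (i) $\mathbb{P}(S_n \neq \bar{S}_n) \to 0$, so that the untruncated and truncated centered sums are asymptotically equivalent in probability, and (ii) $t_n^{-1}(\bar{S}_n - \mu_n) \inP 0$.

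For step (i), I would use an elementary union bound. Because $\bar{X}_{n,k} = X_{n,k}$ whenever $|X_{n,k}| \leq t_n$, a discrepancy $S_n \neq \bar{S}_n$ forces at least one index $k$ with $|X_{n,k}| > t_n$, giving
$$\mathbb{P}(S_n \neq \bar{S}_n) \leq \sum_{k=1}^n \mathbb{P}(|X_{n,k}| > t_n) \to 0$$
by hypothesis (1). Consequently $t_n^{-1}(S_n - \bar{S}_n) \inP 0$, and it is enough to prove the claim for the truncated sum.

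For step (ii), I would apply Chebyshev's inequality to the centered truncated sum. Since $\mu_n = \mathbb{E}[\bar{S}_n]$ and the $X_{n,k}$ (hence the bounded variables $\bar{X}_{n,k}$) are independent within each row, variances add:
$$\mathbb{P}\left( \left| t_n^{-1}(\bar{S}_n - \mu_n) \right| > \epsilon \right) \leq \frac{\mbox{Var}(\bar{S}_n)}{\epsilon^2 t_n^2} = \frac{1}{\epsilon^2}\, t_n^{-2}\sum_{k=1}^n \mbox{Var}(\bar{X}_{n,k}) \leq \frac{1}{\epsilon^2}\, t_n^{-2}\sum_{k=1}^n \mathbb{E}[\bar{X}_{n,k}^2],$$
where the last inequality uses $\mbox{Var}(\bar{X}_{n,k}) \leq \mathbb{E}[\bar{X}_{n,k}^2]$. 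By hypothesis (2) the right-hand side vanishes as $n \to \infty$ for every fixed $\epsilon > 0$, establishing (ii). Combining (i) and (ii) yields $t_n^{-1}(S_n - \mu_n) \inP 0$. There is no serious obstacle here; the only point requiring care is the bookkeeping that keeps the centering constant $\mu_n$ --- the mean of the \emph{truncated} sum --- fixed across both steps, so that the tail-replacement in (i) does not introduce an uncompensated drift. The role of the truncation level $t_n \to \infty$ is precisely to make the tail probabilities in (1) small while keeping the truncated second moments in (2) from blowing up.
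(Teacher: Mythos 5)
Your proof is correct and is essentially the standard argument for this result: the paper itself states this theorem without proof, citing it as Theorem 2.2.6 of Durrett, and your truncation-plus-Chebyshev argument (union bound on $\{S_n \neq \bar{S}_n\}$ via hypothesis (1), then Chebyshev on the centered truncated sum via hypothesis (2) and row-wise independence) is precisely the proof given there. No gaps; the bookkeeping with the centering constant $\mu_n = \mathbb{E}[\bar{S}_n]$ is handled correctly.
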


\begin{theorem}[Slutsky's Theorem, see page 19 in~\cite{1980ser}]\label{thm:slutsky} Suppose $\{A_n, n \geq 1\}$, $\{B_n, n \geq 1\}$ and $\{X_n, n \geq 1\}$ are real-valued random sequences so that $$X_n \inD X; \quad A_n \inP A; \quad B_n \inP B.$$ Then $$A_nX_n +B_n \inD AX +B.$$ If $A \neq 0,$ then $$\frac{X_n}{A_n} \inD \frac{X}{A}.$$
\end{theorem}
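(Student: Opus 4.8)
The plan is to reduce everything to the continuous mapping theorem (CMT) by first upgrading the three marginal convergences into a single \emph{joint} convergence in distribution of the triple $(X_n, A_n, B_n)$. The affine map $(x,a,b) \mapsto ax+b$ and the quotient map $(x,a) \mapsto x/a$ are then continuous (the latter off the set $\{a=0\}$), so both conclusions follow by applying CMT to the appropriate map. Throughout I treat $A$ and $B$ as the constant (degenerate) limits, as the notation $A_n \inP A$, $B_n \inP B$ and the form of the conclusion require.

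First I would establish the key lemma underpinning all Slutsky-type arguments: if $X_n \inD X$ and $Y_n \inP c$ for a constant $c$, then the pair converges jointly, $(X_n,Y_n) \inD (X,c)$. I would prove this via the bounded-Lipschitz characterization of weak convergence on $\Reals^2$, showing $\E[g(X_n,Y_n)] \to \E[g(X,c)]$ for every bounded Lipschitz $g$. Writing the difference as $(\E[g(X_n,Y_n)] - \E[g(X_n,c)]) + (\E[g(X_n,c)] - \E[g(X,c)])$, the second bracket vanishes because $x \mapsto g(x,c)$ is bounded and continuous and $X_n \inD X$; the first bracket is controlled by the Lipschitz bound $|g(X_n,Y_n) - g(X_n,c)| \leq L|Y_n-c|$, whose expectation tends to zero since $Y_n \inP c$ (split on $\{|Y_n-c| \leq \epsilon\}$ and its complement, using boundedness of $g$ on the rare event, then let $\epsilon \to 0$). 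Applying this lemma to the $\Reals^2$-valued convergence $(A_n,B_n) \inP (A,B)$ together with $X_n \inD X$ yields $(X_n,A_n,B_n) \inD (X,A,B)$.

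With joint convergence in hand, the first assertion is immediate: $(x,a,b) \mapsto ax+b$ is continuous on $\Reals^3$, so CMT gives $A_nX_n + B_n \inD AX + B$. For the quotient, I would apply CMT to $h(x,a) = x/a$, which is continuous at every point of $\Reals \times (\Reals \setminus \{0\})$. Since the limiting variable $(X,A)$ places all its mass on $\{a = A \neq 0\}$, the discontinuity set $\{a=0\}$ has probability zero under the limit law, so the version of CMT that tolerates a limit-null discontinuity set applies and delivers $X_n/A_n \inD X/A$.

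The main obstacle is the joint-convergence lemma: marginal convergence of each coordinate does not in general imply joint convergence, and the crux is exploiting that $A_n$ and $B_n$ have \emph{constant} limits, so that the in-probability approximation can be coupled with the weak limit of $X_n$ without any independence hypothesis. Once that degeneracy is used, the remainder is routine CMT bookkeeping; the only other subtlety is invoking CMT in its discontinuity-tolerant form for the division map, which is legitimate precisely because $A \neq 0$ renders the bad set null under the limit distribution.
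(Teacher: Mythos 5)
Your proof is correct, but there is no proof in the paper to compare it against: Theorem~\ref{thm:slutsky} appears in the appendix of ``Some Useful Results'' and is imported by citation to page 19 of~\cite{1980ser}, since it is used only as a tool (e.g., in Theorems~\ref{thm:ob1smbatch} and~\ref{thm:ob2smbatch}). Your route --- upgrading the marginal convergences to joint convergence $(X_n,A_n,B_n) \inD (X,A,B)$ via the bounded-Lipschitz lemma, which is valid precisely because the limits $A$ and $B$ are degenerate, and then applying the continuous mapping theorem to $(x,a,b) \mapsto ax+b$ and, in its discontinuity-tolerant form (the paper's Theorem~\ref{thm:cmt}), to $(x,a) \mapsto x/a$ --- is the standard modern argument, and every step checks out: the $\epsilon$-split controlling $\E\left[\left|g(X_n,Y_n)-g(X_n,c)\right|\right]$ by $L\epsilon + 2\sup|g| \cdot P(|Y_n - c|>\epsilon)$ is exactly right, and the quotient map's discontinuity set $\{a=0\}$ is null under the limit law because $A \neq 0$ is constant. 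The cited source instead argues more elementarily: it first shows that $X_n \inD X$ and $Y_n \inP 0$ imply $X_n + Y_n \inD X$ by a cdf-sandwich estimate, then decomposes $A_nX_n + B_n = (AX_n + B) + (A_n - A)X_n + (B_n - B)$ and uses stochastic boundedness (tightness) of $\{X_n\}$ to make the remainder terms vanish in probability. Both are legitimate; yours gives a cleaner conceptual picture and extends verbatim to $\Reals^d$, while the classical argument avoids any appeal to joint weak convergence. You were also right to insist that $A$ and $B$ be read as constants: with genuinely random limits the statement is false, since marginal convergence cannot pin down the joint law of $(X_n, A_n)$, so that interpretation is forced rather than optional.
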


\begin{theorem} [Covariance Bound, see Corollary 2.5, \citet{2009ethkur}]\label{thm:covbd} Let $1 \leq u,v,w \leq \infty, u^{-1} + v^{-1}+ w^{-1} =1.$ Then for real-valued $Y,Z$ with $Y \in L^w(\Omega,\mathcal{G},P), Z \in L^v(\Omega,\mathcal{H},P)$, $$\left |\mathbb{E}\left[ YZ ] - \mathbb{E}[Y]\mathbb{E}[Z]\right] \right | \leq 2^{v \wedge w \wedge 2 + 1} \alpha^{1/u}(\mathcal{G},\mathcal{H})\|Y\|_v \|Z\|_w.$$ \end{theorem}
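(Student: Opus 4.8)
The plan is to prove the covariance inequality in two stages: first establish it for bounded $Y,Z$, and then bootstrap to the general $L^v$--$L^w$ setting by truncation and H\"older's inequality. Throughout, write $\mathrm{Cov}(Y,Z) = \mathbb{E}[YZ] - \mathbb{E}[Y]\mathbb{E}[Z]$ and recall from Assumption~\ref{ass:mixing} that the strong mixing coefficient admits the representation $\alpha(\mathcal{G},\mathcal{H}) = \sup\{|P(A\cap B) - P(A)P(B)| : A \in \mathcal{G},\, B \in \mathcal{H}\}$. I would first show that whenever $Y$ is $\mathcal{G}$-measurable and $Z$ is $\mathcal{H}$-measurable with $\|Y\|_\infty,\|Z\|_\infty < \infty$, then $|\mathrm{Cov}(Y,Z)| \le 4\,\alpha(\mathcal{G},\mathcal{H})\,\|Y\|_\infty\|Z\|_\infty$. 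Decomposing $Y = Y^+ - Y^-$ and $Z = Z^+ - Z^-$ reduces this to nonnegative bounded variables, and for such variables the Hoeffding covariance identity
$$\mathrm{Cov}(U,V) = \int_0^\infty\!\!\int_0^\infty \big[P(U>s,\,V>t) - P(U>s)\,P(V>t)\big]\,\diff s\,\diff t$$
does the work: the level sets $\{U>s\}$ and $\{V>t\}$ lie in $\mathcal{G}$ and $\mathcal{H}$ respectively, so each integrand is bounded in absolute value by $\alpha(\mathcal{G},\mathcal{H})$, while the effective domain has area $\|U\|_\infty\|V\|_\infty$. Summing the four sign combinations yields the factor $4$.

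For general $Y \in L^v(\mathcal{G})$ and $Z \in L^w(\mathcal{H})$ with $u^{-1}+v^{-1}+w^{-1}=1$, I would fix a level $M>0$ and split $Y = Y_M + \tilde Y_M$ and $Z = Z_M + \tilde Z_M$, where $Y_M := Y\,\mathbb{I}\{|Y|\le M\}$ is the truncation and $\tilde Y_M$ the tail (similarly for $Z$). Expanding $\mathrm{Cov}(Y,Z)$ bilinearly produces four covariances. The bounded--bounded term is controlled by the first stage through $|\mathrm{Cov}(Y_M,Z_M)| \le 4\,\alpha(\mathcal{G},\mathcal{H})\,M^2$. Each remaining term carries at least one tail factor and is handled by the crude bound $|\mathrm{Cov}(A,B)| \le \mathbb{E}|AB| + \mathbb{E}|A|\,\mathbb{E}|B|$ together with the Markov-type tail moment estimate $\mathbb{E}[|Y|\,\mathbb{I}\{|Y|>M\}] \le M^{-(v-1)}\|Y\|_v^{\,v}$ and its analogue for $Z$, combined with H\"older's inequality in the three exponents $u,v,w$. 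Balancing the polynomially-growing bounded term against the tail contributions by choosing $M$ as an appropriate power of $\alpha(\mathcal{G},\mathcal{H})^{-1}$ then produces the power $\alpha^{1/u}$ and the norm product $\|Y\|_v\|Z\|_w$, with the numerical prefactor absorbed into $2^{v\wedge w\wedge 2 + 1}$.

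I expect the main obstacle to be the second stage's bookkeeping: carrying the three H\"older exponents through the tail terms and performing the optimization over $M$ so that the powers of $\alpha(\mathcal{G},\mathcal{H})$, of $M$, and of the $L^v,L^w$ norms combine to exactly $\alpha^{1/u}\|Y\|_v\|Z\|_w$ rather than a messier mixture. An alternative, possibly cleaner, route I would keep in reserve avoids explicit truncation: bound $\mathrm{Cov}(Y,Z) = \mathbb{E}\big[Y\,(\mathbb{E}[Z\mid\mathcal{G}]-\mathbb{E}Z)\big]$ by H\"older, reducing everything to the contraction estimate $\|\mathbb{E}[Z\mid\mathcal{G}]-\mathbb{E}Z\|_{v'} \le C\,\alpha^{1/u}\|Z\|_w$ with $1/v' = 1/u + 1/w$, which itself follows by interpolating the elementary $L^\infty\!\to L^1$ form of the first stage against the trivial $L^w\!\to L^w$ contraction property of conditional expectation. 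Either way, the crux is the quantitative interpolation/optimization that converts the sup-norm mixing bound into the sharp fractional power $\alpha^{1/u}$.
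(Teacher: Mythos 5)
The paper itself offers no proof of Theorem~\ref{thm:covbd}: it is imported verbatim as Corollary~2.5 of~\cite{2009ethkur} and invoked (via the first inequality of~\eqref{varbddep} in the proof of Theorem~\ref{thm:ob1smbatch}) only with $u=1$, $v=w=\infty$. So your attempt has to be judged on its own merits, not against a proof in the paper. Your first stage is correct as stated: for $\mathcal{G}$-measurable $Y$ and $\mathcal{H}$-measurable $Z$ that are bounded, the Hoeffding identity plus the decomposition into positive and negative parts gives $|\mathrm{Cov}(Y,Z)|\le 4\,\alpha(\mathcal{G},\mathcal{H})\,\|Y\|_\infty\|Z\|_\infty$, since every level set lies in the appropriate $\sigma$-algebra.

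The second stage, however, has two concrete problems. First, the \emph{single} truncation level $M$ for both variables does not work when $v\neq w$. Take $v=3/2$, $w=6$ (so $u=6$): the mixed term is bounded by $2M\,\mathbb{E}[|Y|\mathbb{I}\{|Y|>M\}]\le 2M^{2-v}\|Y\|_v^v$, which contains \emph{no} factor of $\alpha$ and grows in $M$, while the other mixed term behaves like $2M^{2-w}\|Z\|_w^w$ and decays in $M$; optimizing over $M$ then balances these two $\alpha$-free terms at $M=O(1)$ and returns a bound of order one --- no power of $\alpha$ at all, let alone $\alpha^{1/6}$. The classical fix is two levels, $M_Y=\|Y\|_v\,\alpha^{-1/v}$ and $M_Z=\|Z\|_w\,\alpha^{-1/w}$ (equivalently, normalize $\|Y\|_v=\|Z\|_w=1$ first); with those, the bounded term is $4\alpha^{1/u}\|Y\|_v\|Z\|_w$, each mixed term is $2\alpha^{1/u}\|Y\|_v\|Z\|_w$, and the tail--tail term is at most $2\alpha^{1/u}\|Y\|_v\|Z\|_w$ (using $\mathbb{E}|Y''Z''|\le\|Y''\|_v\|Z''\|_{v^*}$ and $L^w\subset L^{v^*}$ on a probability space, where $v^*$ is the conjugate of $v$). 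Second, even after this fix the constant is the genuine gap: the terms add to roughly $10$, and optimizing the levels does not bring the total below the stated $2^{v\wedge w\wedge 2+1}$, which equals $8$ when $v\wedge w\ge 2$ and is \emph{strictly less} than $8$ when $v\wedge w<2$ (for $v=3/2$, $w=6$ the claim is $2^{5/2}\approx 5.66$, while your estimates, optimized, stay above roughly $6.5$). So the truncation route proves the inequality only with a larger universal constant --- which, to be fair, is all this paper ever needs --- but not the statement as written. The route that produces the exact constant is precisely the one you keep ``in reserve'': write $\mathrm{Cov}(Y,Z)=\mathbb{E}\left[Y\left(\mathbb{E}[Z\mid\mathcal{G}]-\mathbb{E}[Z]\right)\right]$, apply H\"older, and bound the operator $Z\mapsto\mathbb{E}[Z\mid\mathcal{G}]-\mathbb{E}[Z]$ by Riesz--Thorin interpolation between the $L^\infty\to L^1$ mixing estimate and the trivial contraction bounds; the fractional power of $2$ in $2^{v\wedge w\wedge 2+1}$ is exactly the footprint of that interpolation, and this is essentially how the cited source proves it. If you want the theorem as stated, promote the backup plan to the main argument.
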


\begin{theorem}[see Theorem 1.2.1,~\cite{1981csorev}]\label{thm:brownianinv} Let $\{W(t), 0 \leq t < \infty\}$ denote the Wiener process. If $\{a_n, n\geq 1\}$ is a monotonically non-decreasing sequence of $n$ such that $0 < a_n \leq n$ and the sequence $\{n/a_n, n \geq 1\}$ is monotonically non-decreasing, then,  $$\limsup_{n \to \infty} \sup_{0 \leq t \leq n-a_n} \sup_{0 \leq s \leq a_n} \beta_n \left|B(t+s) - B(t)\right| = 1 \emph{ a.s.}$$ where $$\beta_n = \left(2a_n\left( \log \frac{n}{a_n} + \log^2 n\right)\right)^{-\frac{1}{2}}.$$

\end{theorem}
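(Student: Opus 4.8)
This is the classical increment theorem of~\cite{1981csorev} for the Wiener process, and since it is invoked only as an external tool, the natural plan is to reproduce the standard two-sided $\limsup$ argument rather than to develop anything new. Throughout, $\log^2$ denotes the iterated logarithm $\log\log$, so $\beta_n^{-2} = 2a_n(\log(n/a_n) + \log\log n)$, and I abbreviate $M_n := \sup_{0 \le t \le n-a_n}\sup_{0 \le s \le a_n}|W(t+s)-W(t)|$. The target is the pair of almost-sure bounds $\limsup_n \beta_n M_n \le 1$ and $\limsup_n \beta_n M_n \ge 1$, whose conjunction gives the claimed equality. Both halves rest on the Gaussian tail estimate for $W(t+s)-W(t) \sim N(0,s)$ with $s \le a_n$, evaluated at the threshold $x \approx \beta_n^{-1}$, where the exponent $x^2/(2a_n) = (1\pm\epsilon)^2(\log(n/a_n)+\log\log n)$ converts the normalization into a per-window probability of order $(a_n/n)^{(1\pm\epsilon)^2}(\log n)^{-(1\pm\epsilon)^2}$ up to poly-logarithmic corrections.

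For the upper bound I first reduce the double supremum to a finite maximum. I cover $[0,n]$ by windows of length $a_n$ anchored on a grid fine enough that, by L\'evy's modulus-of-continuity bound, the within-window oscillation of $W$ is at most $\epsilon\beta_n^{-1}$; this needs only a slowly varying number $r_n$ of sub-grid points per window. Applying the Gaussian upper tail with threshold $(1+\epsilon)\beta_n^{-1}$ and a union bound over the $O(r_n\, n/a_n)$ windows yields a probability summable along a geometric subsequence $n_k = \lceil \rho^k\rceil$, the $(\log n)^{-(1+\epsilon)^2}$ factor (supplied by the $\log\log n$ summand) being exactly what forces summability in $k$. Borel--Cantelli gives $\limsup_k \beta_{n_k}M_{n_k} \le 1+\epsilon$ a.s., and the monotonicity of $a_n$ and of $n/a_n$ lets me interpolate $\beta_n M_n$ between consecutive $n_k$ (both factors vary slowly), so that $\limsup_n \beta_n M_n \le 1+\epsilon$; letting $\epsilon\downarrow 0$ through a countable set closes this half.

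For the lower bound I exploit independence, splitting on $\lim_n n/a_n \in [1,\infty]$, which exists by the monotonicity hypothesis. When $n/a_n \to \infty$, I fix $n_k$ and take the disjoint windows $I_j = [(j-1)a_{n_k}, j a_{n_k}]$, $j = 1,\ldots,L_k := \lfloor n_k/a_{n_k}\rfloor$, whose increments $\Delta_j$ are iid $N(0,a_{n_k})$ and each is dominated by $M_{n_k}$. By the Gaussian lower tail, $p_{n_k} := P(\Delta_1 > (1-\epsilon)\beta_{n_k}^{-1})$ is at least a constant multiple of $(a_{n_k}/n_k)^{(1-\epsilon)^2}$ divided by a poly-logarithmic factor, so by independence across $j$ the probability that every window fails is at most $\exp(-p_{n_k}L_k)$, with $p_{n_k}L_k\to\infty$ since $(n_k/a_{n_k})^{1-(1-\epsilon)^2}$ dominates the logarithmic corrections; thus $\sum_k P(M_{n_k} < (1-\epsilon)\beta_{n_k}^{-1}) < \infty$ and the first Borel--Cantelli lemma gives $M_{n_k}\ge(1-\epsilon)\beta_{n_k}^{-1}$ for all large $k$. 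When instead $n/a_n$ stays bounded, the disjoint-window count is finite and the normalizer reduces to $\beta_n^{-1}\sim\sqrt{2a_n\log\log n}$; here the bound $M_n \ge |W(t+a_n)-W(t)|$ combined with the classical law of the iterated logarithm delivers $\limsup_n \beta_n M_n \ge 1$ directly. In either case $\limsup_n \beta_n M_n \ge 1-\epsilon$, and $\epsilon\downarrow 0$ finishes.

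The main obstacle is not any single estimate but the uniform bookkeeping of the $\epsilon$-dependent constants against the precise normalizer $\beta_n$: the role of the $\log\log n$ term is to manufacture the extra $(\log n)^{-(1\pm\epsilon)^2}$ factor needed for almost-sure (rather than in-probability) conclusions, exactly as in the law of the iterated logarithm, and the monotonicity of $a_n$ and $n/a_n$ is what keeps the window count $n/a_n$ regular enough both for the subsequence-to-full-sequence interpolation in the upper bound and for the clean dichotomy in the lower bound. Relaxing those monotonicity conditions is where the argument would genuinely break, so the care lies in handling the two $\lim_n n/a_n$ regimes simultaneously rather than in proving any individual inequality.
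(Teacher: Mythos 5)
The first thing to flag is that the paper contains no proof of Theorem~\ref{thm:brownianinv} to compare against: it is stated in Appendix~\ref{sec:useful} purely as an imported tool, with the citation to Theorem 1.2.1 of Cs\"{o}rg\H{o} and R\'{e}v\'{e}sz standing in for a proof. So your reconstruction must be judged against the classical argument. Your upper-bound half is essentially that standard proof and is sound in outline (discretization at scale proportional to $\epsilon^2 a_n$, Gaussian tail at threshold $(1+\epsilon)\beta_n^{-1}$, union bound over $O(n/a_n)$ windows, Borel--Cantelli along $n_k=\lceil \rho^k\rceil$ driven by the $(\log n_k)^{-(1+\epsilon)^2}\approx k^{-(1+\epsilon)^2}$ factor, and interpolation between consecutive $n_k$ using the two monotonicity hypotheses), with the minor caveat that the appeal to L\'{e}vy's modulus of continuity should itself be another instance of the same grid-plus-union-bound estimate, since you need it uniformly on the growing interval $[0,n]$ rather than on a fixed one.

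The genuine gap is in the lower bound: your dichotomy does not cover the theorem's hypotheses. Both branches fail simultaneously in the intermediate regime where $n/a_n \to \infty$ but $\log (n/a_n) = O(\log\log n)$; take $a_n = n/\log n$ as a concrete example. In your disjoint-window branch the success mass per horizon is
\begin{equation*}
p_n L_n \approx \left(\frac{n}{a_n}\right)^{1-(1-\epsilon)^2}(\log n)^{-(1-\epsilon)^2}\left(\log\frac{n}{a_n}+\log\log n\right)^{-1/2},
\end{equation*}
which for $a_n=n/\log n$ is of order $(\log n)^{4\epsilon-2\epsilon^{2}-1}$ up to the square-root factor, and this tends to $0$ for small $\epsilon$. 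Hence $\exp(-p_{n_k}L_{k})\to 1$, your claim that ``$(n_k/a_{n_k})^{1-(1-\epsilon)^2}$ dominates the logarithmic corrections'' is false here, and the first Borel--Cantelli lemma yields nothing. The LIL fallback does not rescue this regime either: with $a_n = n/\log n$ one has $\beta_n^{-1}\sim\sqrt{2a_n\cdot 2\log\log n}$, so bounding $M_n \geq |W(a_n)|$ and invoking the LIL gives only $\limsup_n \beta_n M_n \geq 1/\sqrt{2}$, short of the required $1-\epsilon$. What is missing is the step that makes the classical proof work, namely combining spatial and temporal independence: take $T_k=\theta^k$, let $A_k$ be the event that some window of length $a_{T_k}$ lying entirely inside $(T_{k-1},T_k]$ has increment exceeding $(1-\epsilon)\beta_{T_k}^{-1}$, observe that $A_1,A_2,\ldots$ are independent, bound $P(A_k)\geq\min\{1/2,\,p_kL_k/2\}$, and verify $\sum_k P(A_k)=\infty$ for \emph{every} admissible $\{a_n\}$ --- the divergence always holds because the iterated-logarithm term costs only $k^{-(1-\epsilon)^2}$ with $(1-\epsilon)^2<1$, while the factor $(T_k/a_{T_k})^{1-(1-\epsilon)^2}\geq 1$ can only help --- and then apply the \emph{second} Borel--Cantelli lemma. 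Without this block construction, your lower bound genuinely fails on a nonempty class of sequences allowed by the theorem.
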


\begin{theorem}[see Mapping Theorem 2.7,~\cite{1999bil}]\label{thm:cmt}
Suppose $h$ is an $\mathcal{S}/\mathcal{S}'$-measurable mapping from $S$ to $S'$ with discontinuity set $D_h \subset S$, where $(S,\mathcal{S})$ and $(S',\mathcal{S}')$ are metric spaces. If $\{Q_n, n \geq 1\}$ is a sequence of probability measures on $(S, \mathcal{S})$ with weak limit $Q$, that is, $Q_n \inD Q$, and $QD_h=0$, then $Q_nh^{-1} \inD Qh^{-1}.$
\end{theorem}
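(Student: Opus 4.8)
The plan is to prove this Mapping Theorem through the Portmanteau characterization of weak convergence, reducing the assertion to a statement about closed sets plus a single topological observation about the discontinuity set $D_h$. Recall that $Q_n \inD Q$ on the metric space $(S,\mathcal{S})$ is equivalent to requiring $\limsup_n Q_n(F) \leq Q(F)$ for every closed set $F \subseteq S$. Accordingly, to establish $Q_n h^{-1} \inD Q h^{-1}$ it suffices to verify, for every closed $F' \subseteq S'$, that $\limsup_n (Q_n h^{-1})(F') \leq (Q h^{-1})(F')$, which is exactly the Portmanteau criterion on the image measures.

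First I would rewrite $(Q_n h^{-1})(F') = Q_n(h^{-1}(F'))$ and pass to the closure, using monotonicity of measures: $Q_n(h^{-1}(F')) \leq Q_n(\overline{h^{-1}(F')})$, where the closure is taken in $S$. Measurability of $h$ guarantees that $h^{-1}(F') \in \mathcal{S}$, so every probability appearing here is well defined. Since $\overline{h^{-1}(F')}$ is a closed subset of $S$, the Portmanteau inequality applied to the hypothesis $Q_n \inD Q$ yields $\limsup_n Q_n(\overline{h^{-1}(F')}) \leq Q(\overline{h^{-1}(F')})$.

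The key step is the topological inclusion $\overline{h^{-1}(F')} \subseteq h^{-1}(F') \cup D_h$. To see this I would take any $x \in \overline{h^{-1}(F')}$ at which $h$ is continuous and produce a sequence $x_k \to x$ with $x_k \in h^{-1}(F')$; continuity then forces $h(x_k) \to h(x)$, and since $F'$ is closed and each $h(x_k) \in F'$, the limit satisfies $h(x) \in F'$, i.e. $x \in h^{-1}(F')$. Hence every point of the closure is either a continuity point lying in $h^{-1}(F')$ or else a point of $D_h$. Combined with subadditivity and the hypothesis $Q(D_h)=0$, this gives $Q(\overline{h^{-1}(F')}) \leq Q(h^{-1}(F')) + Q(D_h) = Q(h^{-1}(F')) = (Q h^{-1})(F')$.

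Chaining the three inequalities delivers $\limsup_n (Q_n h^{-1})(F') \leq (Q h^{-1})(F')$ for arbitrary closed $F'$, completing the argument. The only delicate point, and the one I expect to be the crux, is the topological inclusion: one must use precisely that $h$ is continuous at $x$ and that $F'$ is closed in order to push a limit point of $h^{-1}(F')$ back inside $h^{-1}(F')$, isolating the failure set exactly as $D_h$, whose $Q$-null hypothesis is what makes the closure harmless. Everything else is routine monotonicity, subadditivity, and one invocation of Portmanteau.
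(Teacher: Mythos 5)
Your argument is correct and is precisely the standard Portmanteau proof of the mapping theorem: closed-set criterion, the topological inclusion $\overline{h^{-1}(F')} \subseteq h^{-1}(F') \cup D_h$ (valid here since $S$ is metric, so closure is sequential and continuity at a point is sequential), and the null hypothesis $Q D_h = 0$ to discard the closure. The paper states this result only as a cited auxiliary fact from Billingsley and gives no proof of its own, and your argument coincides with the proof in that reference, so there is nothing to reconcile.
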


\section{Proof of Theorem~\ref{thm:ob2largebatch}}\label{app:ob2largebatch}
\begin{proof} Similar to the proof of Theorem~\ref{thm:ob1largebatch}, observe that \begin{align}\label{firstsplit-samplemean}  \hat{\sigma}^2_{\mbox{\tiny OB-II}}(m_n,b_n) &= \frac{1}{\kappa_2(\beta,b_{\infty})}\underbrace{\frac{m_n}{b_n}\sum_{j=1}^{b_n} \left[\left(\bar{\theta}_{j,m_n} - \bar{\theta}_{m_n} \right)^2 - \sigma^2\left(\tilde{B}_{j,m_n} - \frac{1}{b_n} \sum_{i=1}^{b_n} \tilde{B}_{i,m_n} \right)^2\right]}_{\bar{E_n}(m_n,b_n)} \nonumber \\ & \hspace{2.5in} + \frac{1}{\kappa_2(\beta,b_{\infty})} \underbrace{\frac{\sigma^2}{b_n}\sum_{j=1}^{b_n}\left(\sqrt{m_n} \tilde{B}_{j,m_n} - \frac{\sqrt{m_n}}{b_n} \sum_{i=1}^{b_n} \tilde{B}_{i,m_n} \right)^2}_{\bar{I}_n}  \nonumber \\ &= \frac{1}{\kappa_2(\beta,b_{\infty})}\left(\bar{E}_n(m_n,b_n) + \bar{I}_n\right).\end{align} 

We will now individually characterize the behavior of $\bar{E}_n(m_n,b_n)$ and $\bar{I}_n$ above.
 
Noticing that \begin{equation}\label{error-samplemean} \bar{\theta}_{j,m_n} - \bar{\theta}_{m_n} = \underbrace{\left(\bar{\theta}_{j,m_n} - \sigma\tilde{B}_{j,m_n}\right)}_{U_{j,m_n}} + \sigma\underbrace{\left(\tilde{B}_{j,m_n} - \frac{1}{b_n} \sum_{i=1}^{b_n} \tilde{B}_{i,m_n} \right)}_{\bar{H}_{j,m_n}} + \underbrace{\left(\frac{1}{b_n} \sum_{i=1}^{b_n} \tilde{B}_{i,m_n} - \bar{\theta}_{m_n,d_n} \right)}_{\bar{C}_{m_n}},\end{equation} we can write \begin{align}\label{split2-samplemean} \bar{E_n}(m_n,b_n) = m_n\left(\frac{1}{b_n}\sum_{j=1}^{b_n} U_{j,m_n}^2 + 2 \frac{\sigma}{b_n}\sum_{j=1}^{b_n}  U_{j,m_n} \bar{H}_{j,m_n} + 2 \frac{\sigma \bar{C}_{m_n}}{b_n}\sum_{j=1}^{b_n} U_{j,m_n} +  2 \frac{\sigma \bar{C}_{m_n}}{b_n}\sum_{j=1}^{b_n} \bar{H}_{j,m_n} + \bar{C}_{m_n}^2\right).\end{align} We already know that except for a set of measure zero in the probability space implied by Assumption~\ref{ass:stronginvar}, there exists $\Gamma(\omega)$ such that, uniformly in $j$,  \begin{align}\label{errbds1-samplemean} |U_{j,m_n}| & \leq \Gamma(\omega) m_n^{-1/2-\delta}\left(\log^2 m_n\right)^{1/2}, \end{align} and similarly, \begin{align}\label{errbds2-samplemean} |\bar{C}_{m_n}| & = \frac{\sigma}{b_n}  \left | \sum_{i=1}^{b_n} \sigma^{-1} \bar{\theta}_{i,m_n} - \tilde{B}_{i,m_n} \right |  \nonumber \\  & \leq \Gamma(\omega) m_n^{-1/2-\delta}\left(\log^2 m_n \right)^{1/2} \end{align}

Furthermore, similar to~\eqref{browniannuggetbds}, we use Theorem~\ref{thm:brownianinv} carefully again to see that there exists $n_0(\omega,\epsilon)$ such that for all $n \geq n_0(\omega,\epsilon)$, and uniformly in $j$, \begin{align}\label{browniannuggetbds-samplemean} |\bar{H}_{j,m_n}|  \leq 2 (1+\epsilon) m_n^{-1/2} \sqrt{2 \left(\log^2 n - \log \frac{m_n}{n} \right)} 
\end{align}

Plugging~\eqref{errbds1-samplemean}, ~\eqref{errbds2-samplemean}, and~\eqref{browniannuggetbds-samplemean} in~\eqref{split2-samplemean}, we get for all $n \geq n_0(\omega,\epsilon)$ that \begin{align}\label{errfinbd-samplemean} \bar{E}_n(m_n,b_n) &\leq \Gamma^2(\omega) m_n^{-2\delta}\log^2 m_n
\nonumber \\ & \hspace{0.5in} + 8 \sigma (1+\epsilon) \Gamma(\omega) m_n^{-\delta}\, \left(\log^2 m_n\right)^{1/2} (2(\log^2 n - \log \frac{m_n}{n}))^{1/2}  \nonumber \\ & \hspace{0.5in}  + 2\sigma \Gamma^2(\omega)m_n^{-2\delta}\log^2 m_n + \Gamma^2(\omega)m_n^{-2\delta}\log^2 m_n,\end{align} implying that $\bar{E}_n$ goes to zero almost surely. Now let's calculate the weak limit of $$\bar{I}_n :=  \frac{\sigma^2}{b_n}\sum_{j=1}^{b_n}\left(\sqrt{m_n} \tilde{B}_{j,m_n} - \frac{1}{b_n} \sum_{i=1}^{b_n} \sqrt{m_n} \tilde{B}_{i,m_n} \right)^2$$ appearing in \eqref{firstsplit-samplemean}. Similar to the proof of Theorem~\ref{thm:ob1largebatch}, define a lattice $\{0, \delta_n, 2 \delta_n, \ldots, \lfloor \frac{1}{\delta_n} \rfloor \delta_n \}$ having resolution $$\delta_n : = \frac{1-(m_n/n)}{b_n-1}$$ and a corresponding projection operation on the lattice $$\lfloor u \rfloor_{\scriptsize{\delta_n}} := \max\{k \delta_n: u \geq k \delta_n, k \in \mathbb{Z}\}, \quad u \in [0, 1-\frac{m_n}{n} + \delta_n].$$ 

Recalling that $b_n = 1 + d_n^{-1}(n - m_n)$, we  can write \begin{align}  \bar{I}_n &= \sigma^2\frac{1}{b_n} \frac{b_n-1}{1-(m_n/n)}  \int_0^{1- \frac{m_n}{n} + \delta_n} \left( {\frac{1}{\sqrt{m_n}} \left( W(n \lfloor  u \rfloor_{\delta_n} + m_n) - W(n \lfloor  u \rfloor_{\delta_n} + 1) \right) 
}\right. \nonumber \\ & \hspace{0.5in} \left.{ -\frac{1}{b_n} \frac{b_n-1}{1-(m_n/n)} \int_0^{1- \frac{m_n}{n} + \delta_n} \frac{1}{\sqrt{m_n}} \left( W(n \lfloor  s \rfloor_{\delta_n} + m_n) - W(n \lfloor  s \rfloor_{\delta_n} + 1) \right) \, ds }\right)^2  \, du  \\ & \overset{d}{=} \sigma^2\frac{b_n-1}{b_n} \frac{n}{n-m_n} \frac{n}{m_n} \int_0^{1- \frac{m_n}{n} + \delta_n}  \left( {W( \lfloor  u \rfloor_{\delta_n} + \frac{m_n}{n}) - W( \lfloor  u \rfloor_{\delta_n} + \frac{1}{n})}\right. \nonumber \\& \hspace{0.5in} \left.{- \frac{b_n-1}{b_n} \frac{n}{n-m_n} \int_0^{1- \frac{m_n}{n} + \delta_n} \frac{1}{\sqrt{m_n}} \left( W(n \lfloor  s \rfloor_{\delta_n} + m_n) - W(n \lfloor  s \rfloor_{\delta_n} + 1) \right) \, ds } \right)^2 \, du  \\ & \inD \sigma^2 \frac{\beta^{-1}}{1- \beta} \int_0^{1-\beta} \left( W(u + \beta) - W(u) - \frac{1}{1-\beta} \int_{0}^{1-\beta} \left(W(
s+ \beta) - W(s) \right)\, ds \right)^2 \, du \end{align} if $\delta_n \to 0$ as $n \to \infty$ which happens when $b_n \to b_{\infty}=\infty$. This proves the assertion in~\eqref{swag-samplemean} for $b_{\infty}=\infty$.

For the $b_{\infty} \in \{1,2,\ldots,\}$ case, we observe that
\begin{align}
    \bar{I}_n &:=  \frac{\sigma^2}{b_n}\sum_{j=1}^{b_n}\left(\sqrt{m_n} \tilde{B}_{j,m_n} - \frac{1}{b_n} \sum_{i=1}^{b_n} \sqrt{m_n} \tilde{B}_{i,m_n} \right)^2 \nonumber \\ 
    & \overset{d}{=} \sigma^2 \frac{n}{m_n} \frac{1}{b_n} \sum_{j=1}^{b_n} \left({W((j-1)\frac{1-(m_n/n)}{b_n-1} + \frac{m_n}{n}) -  W((j-1)\frac{1-(m_n/n)}{b_n-1} + \frac{1}{n}) - }\right. \nonumber \\ & \hspace{0.5in} \left.{  \frac{1}{b_n} \sum_{i=1}^{b_n} \left(W((i-1)\frac{1-(m_n/n)}{b_n-1} + \frac{m_n}{n}) -  W((i-1)\frac{1-(m_n/n)}{b_n-1} + \frac{1}{n}) \right)  } \right)^2  \\
    & \to \sigma^2 \frac{1}{\beta} \frac{1}{b_{\infty}} \sum_{j=1}^{b_{\infty}} \left( {W((j-1)\frac{1-\beta}{b_{\infty}-1} + \beta) -  W((j-1)\frac{1-\beta}{b_{\infty}-1}) -} \right. \nonumber \\ & \hspace{2in} \left.{ \frac{1}{b_{\infty}} \sum_{i=1}^{b_{\infty}} \left(W((i-1)\frac{1-\beta}{b_{\infty}-1} + \beta) -  W((i-1)\frac{1-\beta}{b_{\infty}-1} ) \right)  }\right)^2,
\end{align} thus proving the assertion for finite $b_{\infty}$. 

Next, observe that
\begin{align} \label{asympexp3.1}
     \mathbb{E}[ \bar{I}_n] &= \frac{\sigma^2}{b_n} \sum_{j=1}^{b_n} \, \mathbb{E} \left[\sqrt{m_n} \tilde{B}_{j, m_{n}} - \frac{1}{b_n} \sum_{i=1}^{b_n} \sqrt{m_n} \tilde{B}_{i, m_{n}} \right]^2 \nonumber\\
    &= \sigma^2 \left(\frac{m_n}{b_n} \sum_{j=1}^{b_n} \mathbb{E} [\tilde{B}_{j, m_{n}}^2] - \frac{m_n}{b_n^2} \sum_{i=1}^{b_n} \sum_{j=1}^{b_n} \mathbb{E} [\tilde{B}_{i, m_{n}} \tilde{B}_{j, m_{n}}] \right). 
\end{align}

Some algebra yields 
\begin{align}
\label{asympexp3.1.1}
\frac{m_{n}}{b_{n}} \sum_{j=1}^{b_{n}} \mathbb{E}\left[\tilde{B}_{j, m_{n}}^{2}\right]=\frac{m_{n}}{b_{n}} \sum_{j=1}^{b_{n}} \frac{m_{n}-1}{m_{n}^{2}}=1, \quad j \in \{1,2,\ldots,b_n\}
\end{align}

and

\begin{align}
\label{asympexp3.1.2}
m_n \, \mathbb{E}\left[\tilde{B}_{i, m_{n}} \tilde{B}_{j, m_{n}}\right]= \left(1 - \frac{|i-j|}{b_n-1} \frac{n-m_n}{m_n}\right)^+ \quad i,j \in \{1,2,\ldots,b_n\}.
\end{align}

Plug in \eqref{asympexp3.1.1} and \eqref{asympexp3.1.2} in \eqref{asympexp3.1}, and we have

\begin{align}
\mathbb{E} [\bar{I}_n] &= \sigma^2 \left(1- \frac{1}{b_n^2} \sum_{i=1}^{b_n} \sum_{j=1}^{b_n} \left(1 - \frac{|i-j|}{b_n-1} \frac{n-m_n}{m_n}\right)^+\right) \nonumber \\
&= \sigma^2 \left(1-\frac{1}{b_n} - \frac{2}{b_n}\sum_{h=1}^{b_n-1} \left(1 - \frac{h}{b_n-1} \frac{n-m_n}{m_n}\right)^+(1-h/b_n)\right) \label{expibarn1} \\ &= \sigma^2 \kappa_2(\beta,b_{\infty}) + o(n^{-\delta}),
\label{expibarn2} \end{align} where the last equality holds by the definition of $\kappa_2(\beta,b_{\infty})$ in~\eqref{biascorrect2again} and since we have assumed that $|m_n/n - \beta| = o(n^{-\delta})$. Also, since $\Gamma$ has been assumed to have finite second moment,~\eqref{errfinbd-samplemean} implies that \begin{equation} \label{errexpzero} \mathbb{E}[\bar{E}_n] = O\left( (\beta n)^{-\delta} \sqrt{2 \log^2 \beta n \log^2 n}\right ). \end{equation} Using~\eqref{expibarn2} and~\eqref{errexpzero} in~\eqref{firstsplit-samplemean}, we conclude that for finite $b_{\infty} \in \{1,2,\ldots\}$,
\begin{align}
    \lim_{n \to \infty} \mathbb{E}[\hat{\sigma}^2_{\mbox{\tiny OB-II}}(m_n,b_n)] = \sigma^2 + O(\epsilon_{2,n}).
\end{align}

Let's next consider the $b_{\infty} = \infty$ case. We write the summation appearing in~\eqref{expibarn1} as an integral on a lattice of size $\delta_n = 1/(b_n-1)$ as follows: \begin{align}\label{integform} \MoveEqLeft \frac{2}{b_n}\sum_{h=1}^{b_n-1} \left(1 - \frac{h}{b_n-1}(\frac{n}{m_n}-1)\right)^+(1-\frac{h}{b_n}) \nonumber & \\ & \hspace{1.5in} =  2\int_0^1 \left(1 - \lfloor u \rfloor_{\delta_n} (\frac{n}{m_n}-1)\right)^+(1-\lfloor u \rfloor_{\delta_n}\frac{b_n-1}{b_n}) \, du. \end{align} Plugging~\eqref{integform} in~\eqref{expibarn1}, and since $|m_n/n - \beta| = o(n^{-\delta})$ and $b_n^{-1} = o(n^{-\delta})$, we get \begin{align}\label{binfexp} \mathbb{E}[\bar{I}_n] &= \sigma^2\left(1 - 2\int_0^1 \left(1- u(\frac{1}{\beta} - 1)\right)^+(1-u) \, du \right) + O(\epsilon_{2,n}) \nonumber \\ &= \sigma^2\left(1- 2\int_0^{\frac{\beta}{1-\beta} \wedge 1} \left(1 - u\frac{1-\beta}{\beta}\right)(1-u)\, du\right) + O(\epsilon_{2,n}) \nonumber \\
&= \sigma^2\left(1 - 2\left(\frac{\beta}{1-\beta} \wedge 1\right) +  \frac{1}{\beta}\left(\frac{\beta}{1-\beta} \wedge 1\right)^2 - \frac{2}{3}\frac{1-\beta}{\beta}\left(\frac{\beta}{1-\beta} \wedge 1\right)^3\right) + O(\epsilon_{2,n}) \nonumber \\ &=: \sigma^2 \kappa_2(\beta,\infty) + O(\epsilon_{2,n}).\end{align}  Now use~\eqref{binfexp} and~\eqref{errexpzero} in~\eqref{firstsplit-samplemean} to see that the assertion corresponding to $b_{\infty}=\infty$ also holds.

Let's now prove that the statement in~\eqref{ob2t-stat} holds. Using Assumption~\ref{ass:stronginvar} and after some algebra, we have almost surely, \begin{align}\label{stronginvarapp2} \left| \sqrt{n}\frac{\left(\bar{\theta}_n - \theta(P)\right)}{\sigma} - \frac{1}{b_n}\sum_{j=1}^{b_n} \frac{\sqrt{n}}{m_n} \left(W(nc_j + m_n) - W(nc_j) \right)\right| \leq \Gamma m_n^{-\delta-1/2}\sqrt{\log^2 m_n}, \end{align} where $\Gamma$ is a well-defined random variable with finite mean, $\delta>0,$ and $$c_j = (j-1)\frac{n-m_n}{b_n-1}.$$ Also, since $m_n \to \infty$ and $m_n/n \to \beta >0$, we see that \begin{numcases}{\label{}\frac{1}{b_n}\sum_{j=1}^{b_n} \frac{\sqrt{n}}{m_n} \left(W(nc_j + m_n) - W(nc_j) \right) \inD} \frac{1}{1-\beta}\int_0^{1-\beta} W(s+\beta) - W(s) \, ds \quad & $b_{\infty}= \infty$;\nonumber \\
\frac{1}{b_{\infty}\beta} \sum_{j=1}^{b_{\infty}} W(nc_j + m_n) - W(nc_j)  \quad & $b_{\infty} \in \mathbb{N}\setminus \{1\}$, \nonumber \end{numcases} implying along with~\eqref{stronginvarapp2} and the \rp{Slutsky's theorem} (Theorem~\ref{thm:slutsky}) that the assertion in~\eqref{ob2t-stat} holds.

\end{proof}

\end{document}